\newtheorem*{Thm*}{Theorem}
\newtheorem{Thm}{Theorem}
\newtheorem{Cor}[Thm]{Corollary}
\newtheorem{Prop}[Thm]{Proposition}
\newtheorem{Lemma}[Thm]{Lemma}
\theoremstyle{definition}
\newtheorem{Defn}[Thm]{Definition}
\newtheorem{Notation}[Thm]{Notation}
\newtheorem{Remark}[Thm]{Remark}
\newtheorem{Example}[Thm]{Example}
\newcommand{\mf}[1]{\mathbb{#1}}
\newcommand{\mc}[1]{\mathcal{#1}}
\newcommand{\mb}[1]{\mathbf{#1}}
\newcommand{\mi}[1]{\mathit{#1}}
\DeclareMathOperator{\Int}{\mathit{Int}}
\DeclareMathOperator{\Outer}{\mathit{Out}}
\DeclareMathOperator{\Sing}{\mathit{Sing}}
\DeclareMathOperator{\tr}{\mathrm{tr}}
\DeclareMathOperator{\Sym}{Sym}
\DeclareMathOperator{\cyc}{\mathrm{cyc}}
\newcommand{\norm}[1]{\left\Vert#1\right\Vert}
\newcommand{\abs}[1]{\left\vert#1\right\vert}
\newcommand{\chf}[1]{\mathbf{1}_{#1}}
\newcommand{\set}[1]{\left\{#1\right\}}
\newcommand{\ip}[2]{\left \langle #1, #2 \right \rangle}
\renewcommand{\phi}{\varphi}
\newcommand{\state}[1]{\varphi \left[ #1 \right]}
\newcommand{\statep}[2]{\varphi_{#1} \left[ #2 \right]}
\newcommand{\KS}[1]{\mathit{W} \left( #1 \right)}
\newcommand{\KSP}[2]{\mathit{W}_{#1} \left( #2 \right)}
\newcommand{\Exp}[1]{\left\langle #1 \right\rangle}
\newcommand{\utimes}{\kern0.05em\buildrel{\times}\over{\rule{0em}{0.004em}} \kern-0.9em\cup \kern0.2em}
\newcommand{\putimes}{\mathop{\kern0.05em\buildrel{\times}\over{\rule{0em}{0.0em}} \kern-0.9em\cup \kern0em}}
\newcommand{\hutimes}{\mathop{\kern0.02em\buildrel{\times}\over{\rule{0em}{0.0em}} \kern-0.48em\cup \kern0em}}
\newcommand{\sutimes}{\mathrel{\kern0em \buildrel{\mathsf{x}}\over{\rule{0em}{0.0em}} \kern-0.35em\cup \kern-0.0em}}
\title[Product formulas on posets, Wick products, and a correction]{Product formulas on posets, Wick products, and a correction for the $q$-Poisson process}
\author{Michael Anshelevich}
\thanks{The author was supported in part by a Simons Foundation Collaboration Grant for Mathematicians.}
\address{Department of Mathematics, Texas A\&M University, College Station, TX 77843-3368}
\email{manshel@math.tamu.edu}
\subjclass[2010]{Primary 46L54; Secondary 05A18, 33C47}
\date{\today}
\begin{document}

\begin{abstract}
We give an example showing that the product and linearization formulas for the Wick product versions of the $q$-Charlier polynomials in \cite{AnsAppell} are incorrect. Next, we observe that the relation between monomials and several families of Wick polynomials is governed by ``incomplete'' versions of familiar posets. We compute M{\"o}bius functions for these posets, and prove a general poset product formula. These provide new proofs and new inversion and product formulas for Wick product versions of Hermite, Chebyshev, Charlier, free Charlier, and Laguerre polynomials. By different methods, we prove inversion formulas for the Wick product versions of the free Meixner polynomials.
\end{abstract}

\maketitle

\section{Introduction}

Let $\mc{M}$ be a complex star-algebra, and $\Exp{\cdot}$ a star-linear functional on it. Let $\Gamma(\mc{M})$ be the complex unital star-algebra generated by non-commuting symbols $\set{X(a): a \in \mc{M}}$ and $1$, subject to the linearity relations
\[
X(\alpha a + \beta b) = \alpha X(a) + \beta X(b).
\]
Thus $\Gamma(\mc{M})$ is naturally isomorphic to the tensor algebra of $\mc{M}$, but we prefer to use the polynomial notation rather than this identification. In particular the algebra is filtered by the degree of polynomials. The star-operation on $\Gamma(\mc{M})$ is determined by the requirement that $X(a^\ast) = X(a)^\ast$.

In this article we will discuss six constructions of \emph{Wick products} (four known and two new), that is, linear maps $W : \mc{M}^{\otimes n} \rightarrow \Gamma(\mc{M})$ whose ranges (together with the scalars) provide a grading compatible with the degree filtration of $\Gamma(\mc{M})$. These have also been called the Kailath-Segall polynomials. Note that in the literature, the term ``Wick product'' is also used for the multivariate Appell polynomials. See, for example, Sections~2.3 and 2.8 of \cite{AnsAppell} for differences and similarities between these two families.

 As is well-known, there are three reasons to consider Wick products.
\begin{itemize}
\item
Define a star-linear functional $\phi$ of $\Gamma(\mc{M})$ by $\state{1} = 1$, $\state{\KS{a_1 \otimes a_2 \otimes \ldots \otimes a_n}} = 0$ for $n \geq 1$. In many examples, $\phi$ is positive, the $W$ operators have orthogonal ranges for different $n$, and we have a Fock representation of $\Gamma(\mc{M})$ on (a quotient of) $L^2(\Gamma(\mc{M}), \phi)$. In this case the $W$ operators are indeed Wick products.
\item
For $\mc{M} = (L^1 \cap L^\infty)(\mf{R})$, in many examples we have an It\^{o} isometry which allows us to interpret $\KS{a_1 \otimes a_2 \otimes \ldots \otimes a_n}$ as a stochastic integral
\[
\int \ldots \int a_1(t_1) \ldots a_n(t_n) \,dX(t_1) \ldots \,dX(t_n).
\]
This isometry may involve unfamiliar inner products on multivariate function spaces, see Remark~\ref{Remark:Ito-isometry}.
\item
Suppose all $a_1 = \ldots = a_n = a$. Setting $a=1$, or more generally a multiple of a projection, $\KS{a_1 \otimes a_2 \otimes \ldots \otimes a_n}$ becomes a polynomial in $X(a)$. In many examples, these polynomials are orthogonal for different $n$.
\end{itemize}
Our main interest is in mutual expansions between polynomials $\KS{a_1 \otimes a_2 \otimes \ldots \otimes a_n}$ and monomials, product formulas
\[
\prod_{i=1}^k \KS{a_{u_i(1)} \otimes a_{u_i(2)} \otimes \ldots \otimes a_{u_i(s(i))}}
= \sum W,
\]
and corresponding linearization coefficients. We show that the product formulas for $q$-Wick products claimed in \cite{AnsAppell} are incorrect. The rest of the article concerns related positive results.

In combinatorics, linearization formulas are often proved using a weight-preserving sign-reversing involution, in other words a version of the inclusion-exclusion principle. We use a different generalization of this principle, namely M{\"o}bius inversion. For five out of our six examples, we define posets $\Pi$ such that
\[
X(a_1) \ldots X(a_n) = \sum_{\pi \in \Pi} \KS{a_1 \otimes \ldots \otimes a_n}^\pi.
\]
The posets which arise are ``incomplete'' versions of matchings, non-crossing matchings, set partitions, non-crossing partitions, and permutations. These posets, as posets, may be deserving of further study (some preliminary enumeration results for them are described in Appendix~\ref{Sec:Combinatorial}). We compute their M{\"o}bius functions and thus obtain inversion formulas. We also prove a general product formula on posets, and apply it to obtain product formulas for Wick products. For the matchings and partitions the results are known but the proofs are new. For the permutations the results are new. For the non-crossing matchings and partitions, the results are known for the usual Wick products, but the poset method allows us to extend them to operator-valued Wick products with no difficulty. As an application, it was observed by ad hoc methods that in the case of incomplete partitions, inversion formulas involve general open blocks but only singleton closed blocks (see Proposition~\ref{Prop:IP} for terminology). The M{\"o}bius function approach provides an explanation for this phenomenon. As expected, in the product formulas, in the partitions cases only inhomogeneous partitions appear, while in the permutation case we encounter ``incomplete generalized derangements''.

It is well known that the number of non-crossing matchings on $2n$ points equals the number of non-crossing partitions of $n$ points (namely it is the Catalan number).
As a combinatorial aside, we observe that the same numerical equality holds for their incomplete versions, but moreover, the collections of incomplete non-crossing matchings on $2n$ points and incomplete non-crossing partitions of $n$ points are isomorphic as posets. The reader interested only in combinatorial results may concentrate on Section \ref{Sec:Posets} and the Appendices, while the reader not interested in combinatorics may skip most of these sections.

For our sixth and most interesting example, morally corresponding to ``non-crossing permutations'', we do not know a natural poset structure governing Wick product expansions. So we perform the computations in a more direct way, using induction and generating functions. The combinatorial objects which govern these expansions are pairs $\sigma \ll \pi$ of non-crossing partitions in a relation first observed by Belinschi and Nica \cite{Belinschi-Nica-Eta,AnsFree-Meixner,Nica-NC-Linked}. In Appendix~\ref{Sec:Alternative}, we list other combinatorial structures corresponding to non-crossing permutations.

Finally, we discuss some analytic extensions of the algebraic results from earlier sections. It is natural to ask whether the map $(\mc{M}, \Exp{\cdot}) \mapsto (\Gamma(\mc{M}), \phi)$ can be interpreted as a functor, generalizing the well-known Gaussian/semiciruclar functors. For the case of the incomplete non-crossing partitions, we show that this is so, although a larger collection of morphisms would be desirable. We also observe that in all six of our examples, if the functionals $\Exp{\cdot}$ are tracial states, so are the functionals $\phi$. Finally, for the case of incomplete non-crossing partitions, we show that the product formulas hold when one of the factors is in $L^2$.

The paper is organized as follows. In Section~\ref{Sec:Posets}, we prove a general linearization result on posets, and compute M{\"o}bius functions for five posets. In Section~\ref{Sec:Wick}, we use these to obtain inversion and product formulas for five types of Wick products. In Section~\ref{Sec:Free-Meixner}, we obtain the monomial expansions and inversion formulas for the free Meixner Wick products. In short Section~\ref{Sec:Counterexample} we give an explicit counterexample to the product formula for $q$-Wick products claimed in \cite{AnsAppell}. In Section~\ref{Sec:Completions}, we collect the analytic results. In Appendix~\ref{Sec:Combinatorial}, we list some enumerative properties of incomplete posts, and combinatorial consequences of the results above for ordinary polynomials. In short Appendix~\ref{Sec:Alternative} we finish with variations on the approach in Section~\ref{Sec:Free-Meixner}.

\textbf{Acknowledgements.} I am grateful to Amudhan Krishnaswami-Usha for discussions about Section 6, and to the referee for numerous useful comments.

\section{Linearization on posets}
\label{Sec:Posets}

Throughout the article, we will only consider finite posets.

\begin{Remark}
Let $(\Pi_n, \leq)_{n=1}^\infty$ be a family of posets. As usual, we write $\sigma < \pi$ if $\sigma \leq \pi$ and $\sigma \neq \pi$. In all our examples, $\Pi_n$ will be a meet-semilattice, with the meet operation $\wedge$, and the smallest element, denoted by $\hat{0}_n$. Recall that for $\sigma \leq \pi$, the M{\"o}bius function $\mu(\sigma,\pi)$ on $\Pi$ is determined by the property that
\begin{equation}
\label{Eq:Mobius}
\sum_{\tau: \sigma \leq \tau \leq \pi} \mu(\sigma, \tau) =
\begin{cases}
1, & \sigma = \pi, \\
0, & \sigma \neq \pi.
\end{cases}
\end{equation}
As a consequence, we have the M{\"o}bius inversion formula: if $F, G$ are two functions on $\Pi$ such that
\begin{equation}
\label{Eq:F-G}
F(\pi) = \sum_{\sigma \geq \pi} G(\sigma),
\end{equation}
then
\[
G(\pi) = \sum_{\sigma \geq \pi} \mu(\pi, \sigma) F(\sigma).
\]
\end{Remark}

\begin{Thm}
\label{Thm:General-product}
Consider a family of finite posets $(\Pi_i)_{i=1}^\infty$. Fix $s(1), s(2), \ldots, s(k) \geq 1$, and denote $n = s(1) + \ldots + s(k)$. Suppose we have an order-preserving injection
\[
\alpha: \Pi_{s(1)} \times \ldots \times \Pi_{s(k)} \rightarrow \Pi_n
\]
with the property that for any $\tau \in \Pi_n$ there exists a $\tau_{s(1), \ldots, s(k)} \in \Pi_n$ with
\[
\set{\sigma \in \alpha(\Pi_{s(1)} \times \ldots \times \Pi_{s(k)}), \sigma \leq \tau} = \set{\sigma \in \Pi_n: \sigma \leq \tau_{s(1), \ldots, s(k)}},
\]
where $\tau_{s(1), \ldots, s(k)} = \tau$ if $\tau \in \alpha(\Pi_{s(1)} \times \ldots \times \Pi_{s(k)})$. Let $G$ be a function on the disjoint union $\coprod_{n=1}^\infty \Pi_n$. Denote, for $\pi \in \Pi_i$,
\[
F(\pi) = \sum_{\sigma \geq \pi} G(\sigma).
\]
Suppose that for $\pi_i \in \Pi_{s(i)}$,
\begin{equation}
\label{Eq:Multiplicative}
F(\pi_1) F(\pi_2) \ldots F(\pi_k) = F(\alpha(\pi_1, \ldots, \pi_k)).
\end{equation}
Then
\[
G(\hat{0}_{s(1)}) \ldots G(\hat{0}_{s(k)})
= \sum_{\substack{\tau \in \Pi_n \\ \tau_{s(1), \ldots, s(k)} = \hat{0}_n}} G(\tau).
\]
\end{Thm}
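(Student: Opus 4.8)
The plan is to reduce everything to Möbius inversion on the product poset $P = \Pi_{s(1)} \times \cdots \times \Pi_{s(k)}$ and then use the key property to collapse the resulting sum. First I would apply the Möbius inversion formula from the Remark to each factor separately: since $\hat{0}_{s(i)}$ is the smallest element of $\Pi_{s(i)}$, the sum over $\sigma_i \geq \hat{0}_{s(i)}$ runs over all of $\Pi_{s(i)}$, so
\[
G(\hat{0}_{s(i)}) = \sum_{\sigma_i \in \Pi_{s(i)}} \mu(\hat{0}_{s(i)}, \sigma_i)\, F(\sigma_i).
\]
Multiplying these $k$ identities and invoking the multiplicativity hypothesis \eqref{Eq:Multiplicative} to replace $F(\sigma_1)\cdots F(\sigma_k)$ by $F(\alpha(\sigma_1,\ldots,\sigma_k))$, together with the fact that the Möbius function of a product poset factors as $\prod_i \mu(\hat{0}_{s(i)}, \sigma_i) = \mu_P(\hat{0}_P, \sigma)$, gives
\[
G(\hat{0}_{s(1)})\cdots G(\hat{0}_{s(k)}) = \sum_{\sigma \in P} \mu_P(\hat{0}_P, \sigma)\, F(\alpha(\sigma)),
\]
where $\hat{0}_P = (\hat{0}_{s(1)}, \ldots, \hat{0}_{s(k)})$.

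Next I would expand $F(\alpha(\sigma)) = \sum_{\tau \geq \alpha(\sigma)} G(\tau)$ and interchange the order of summation, so that the coefficient of each $G(\tau)$ becomes $c(\tau) = \sum_{\sigma \in P:\ \alpha(\sigma) \leq \tau} \mu_P(\hat{0}_P, \sigma)$. The goal is then the clean evaluation $c(\tau) = 1$ when $\tau_{s(1),\ldots,s(k)} = \hat{0}_n$ and $c(\tau) = 0$ otherwise, since summing this against $G(\tau)$ yields precisely the claimed formula. Here is where the key property enters: by injectivity of $\alpha$ the index set $\set{\sigma \in P : \alpha(\sigma) \leq \tau}$ is in bijection with $\set{\sigma \in \alpha(P) : \sigma \leq \tau}$, which by hypothesis equals the full principal ideal $[\hat{0}_n, \tau_{s(1),\ldots,s(k)}]$, itself contained in the image. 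Thus $\set{\sigma \in P : \alpha(\sigma) \leq \tau} = \alpha^{-1}\bigl([\hat{0}_n, \tau_{s(1),\ldots,s(k)}]\bigr)$, an order ideal of $P$ containing $\hat{0}_P$.

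The crux is to identify this order ideal as a product of principal intervals: writing $(\rho_1, \ldots, \rho_k) = \alpha^{-1}(\tau_{s(1),\ldots,s(k)})$, I want
\[
\alpha^{-1}\bigl([\hat{0}_n, \tau_{s(1),\ldots,s(k)}]\bigr) = [\hat{0}_{s(1)}, \rho_1] \times \cdots \times [\hat{0}_{s(k)}, \rho_k].
\]
The inclusion $\supseteq$ is immediate from order-preservation of $\alpha$; the reverse inclusion is the real content and amounts to $\alpha$ reflecting order on this interval. This is where I expect the main obstacle to lie, since an order-preserving injection need not reflect order in general; in the intended setting it is resolved using that each $\Pi_{s(i)}$ (hence $P$) is a meet-semilattice and $\alpha$ is a meet-embedding: if $\alpha(\sigma) \leq \alpha(\rho)$ then $\alpha(\sigma) = \alpha(\sigma) \wedge \alpha(\rho) = \alpha(\sigma \wedge \rho)$, so injectivity forces $\sigma = \sigma \wedge \rho \leq \rho$. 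Granting this, the coefficient factors completely,
\[
c(\tau) = \prod_{i=1}^k\ \sum_{\hat{0}_{s(i)} \leq \sigma_i \leq \rho_i} \mu(\hat{0}_{s(i)}, \sigma_i) = \prod_{i=1}^k \delta_{\hat{0}_{s(i)}, \rho_i}
\]
by the defining relation \eqref{Eq:Mobius}. Since $\alpha(\hat{0}_{s(1)}, \ldots, \hat{0}_{s(k)}) = \hat{0}_n$ (the image of the minimum of $P$ is the minimum of the down-set $\alpha(P)$, namely $\hat{0}_n$), this product equals $1$ exactly when every $\rho_i = \hat{0}_{s(i)}$, equivalently when $\tau_{s(1),\ldots,s(k)} = \hat{0}_n$, and $0$ otherwise. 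Substituting back yields $G(\hat{0}_{s(1)}) \cdots G(\hat{0}_{s(k)}) = \sum_{\tau:\ \tau_{s(1),\ldots,s(k)} = \hat{0}_n} G(\tau)$, as desired.
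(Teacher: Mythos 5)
Your proof is correct, and in outline it is the same as the paper's: M{\"o}bius inversion in each factor $\Pi_{s(i)}$, the multiplicativity hypothesis \eqref{Eq:Multiplicative}, an interchange of summation, and evaluation of the coefficient of $G(\tau)$ using \eqref{Eq:Mobius} together with the defining property of $\tau_{s(1),\ldots,s(k)}$. The only organizational difference is where the interval identification is deployed: the paper first converts $\prod_i \mu(\hat{0}_{s(i)},\sigma_i)$ into $\mu(\hat{0}_n,\alpha(\sigma_1,\ldots,\sigma_k))$ via the isomorphism $[\hat{0}_n,\alpha(\sigma_1,\ldots,\sigma_k)]\simeq\prod_i[\hat{0}_{s(i)},\sigma_i]$, so that the coefficient of $G(\tau)$ becomes a sum of $\mu(\hat{0}_n,\cdot)$ over the principal ideal $[\hat{0}_n,\tau_{s(1),\ldots,s(k)}]$ of $\Pi_n$, while you keep the M{\"o}bius function of the product poset and instead identify the index set $\set{\sigma: \alpha(\sigma)\leq\tau}$ with the product of intervals $\prod_i[\hat{0}_{s(i)},\rho_i]$. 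These are equivalent pieces of bookkeeping, and both hinge on the same crux.

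That crux, which you flag as the main obstacle, is in fact a genuine gap in the paper's own argument: the step marked $\simeq$ in the paper's proof is asserted without justification, and it is exactly where order-reflection of $\alpha$ is needed. Moreover, the theorem as literally stated (order-preserving injection only) is false. Take $k=2$, $s(1)=s(2)=1$, $\Pi_1=\set{\hat{0}_1 < a}$, $\Pi_2=\set{\hat{0}_2 < x < y < z}$ (both meet-semilattices, so the standing assumptions of the preceding Remark hold), and
\[
\alpha(\hat{0}_1,\hat{0}_1)=\hat{0}_2,\quad \alpha(\hat{0}_1,a)=x,\quad \alpha(a,\hat{0}_1)=y,\quad \alpha(a,a)=z.
\]
This is an order-preserving injection, and it is onto, so the covering hypothesis holds trivially with $\tau_{1,1}=\tau$. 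Setting $G\equiv 1$ on $\Pi_1$ gives $F(\hat{0}_1)=2$, $F(a)=1$; multiplicativity then forces $F(\hat{0}_2)=4$, $F(x)=F(y)=2$, $F(z)=1$, hence $G(z)=G(y)=1$, $G(x)=0$, $G(\hat{0}_2)=2$; and the conclusion would read $G(\hat{0}_1)^2=G(\hat{0}_2)$, i.e.\ $1=2$. The failure is precisely that $\alpha(\hat{0}_1,a)\leq\alpha(a,\hat{0}_1)$ while $(\hat{0}_1,a)\not\leq(a,\hat{0}_1)$. So the extra assumption you introduce --- that $\alpha$ preserves meets, or more economically that it reflects order, $\alpha(\sigma)\leq\alpha(\rho)\Rightarrow\sigma\leq\rho$ --- is not a convenience but a necessary additional hypothesis; it does hold for all five posets to which the paper applies the theorem, since in each case the image of $\alpha$ carries the order induced from the product. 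Modulo making that hypothesis explicit, your proof is complete, and your explicit treatment of this point is an improvement over the paper's.
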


\begin{proof}
Taking first $\tau = \alpha(\sigma_1, \ldots, \sigma_k)$, we see that since $\tau = \tau_{s(1), \ldots, s(k)}$,
\[
\begin{split}
[\hat{0}_n, \alpha(\sigma_1, \ldots, \sigma_k)]
& = [\hat{0}_n, \tau]
= \set{\sigma \in \Pi_n: \sigma \leq \tau_{s(1), \ldots, s(k)}} \\
& = \set{\sigma \in \alpha(\Pi_{s(1)} \times \ldots \times \Pi_{s(k)}), \sigma \leq \alpha(\sigma_1, \ldots, \sigma_k)} \\
& \simeq \set{\sigma \in \Pi_{s(1)} \times \ldots \times \Pi_{s(k)}, \sigma \leq (\sigma_1, \ldots, \sigma_k)} \\
& = [\hat{0}_{s(1)}, \sigma_1] \times \ldots \times [\hat{0}_{s(k)}, \sigma_k]
\end{split}
\]
Thus, since the M{\"o}bius function is multiplicative,
\[
\mu(\hat{0}_n, \alpha(\sigma_1, \ldots, \sigma_k))
= \prod_{i=1}^k \mu(\hat{0}_{s(i)}, \sigma_i).
\]
The rest of the proof is similar to that of Theorem~4 in \cite{Rota}. Using various assumptions,
\[
\begin{split}
G(\hat{0}_{s(1)}) \ldots G(\hat{0}_{s(k)})
& = \sum_{\sigma_1 \in \Pi_{s(1)}} \ldots \sum_{\sigma_k \in \Pi_{s(k)}} \prod_{i=1}^k \mu(\hat{0}_{s(i)}, \sigma_i) \prod_{i=1}^k F(\sigma_i) \\
& = \sum_{(\sigma_1, \ldots, \sigma_k) \in \Pi_{s(1)} \times \ldots \times \Pi_{s(k)}} \mu(\hat{0}_n, \alpha(\sigma_1, \ldots, \sigma_k)) F(\alpha(\sigma_1, \ldots, \sigma_k)) \\
& = \sum_{(\sigma_1, \ldots, \sigma_k) \in \Pi_{s(1)} \times \ldots \times \Pi_{s(k)}} \mu(\hat{0}_n, \alpha(\sigma_1, \ldots, \sigma_k)) \sum_{\tau \geq \alpha(\sigma_1, \ldots, \sigma_k)} G(\tau) \\
& = \sum_{\tau \in \Pi_n} G(\tau) \sum_{\substack{(\sigma_1, \ldots, \sigma_k) \in \Pi_{s(1)} \times \ldots \times \Pi_{s(k)}, \\ \alpha(\sigma_1, \ldots, \sigma_k) \leq \tau}} \mu(\hat{0}_n, \alpha(\sigma_1, \ldots, \sigma_k)) \\
& = \sum_{\tau \in \Pi_n} G(\tau) \sum_{\substack{\sigma \in \Pi_n, \\ \sigma \leq \tau_{s(1), \ldots, s(k)}}} \mu(\hat{0}_n, \sigma) \\
& = \sum_{\substack{\tau \in \Pi_n \\ \tau_{s(1), \ldots, s(k)} = \hat{0}_n}} G(\tau). \qedhere
\end{split}
\]
\end{proof}

\begin{Remark}
We will show that for the five posets in the next series of propositions, the conditions above are satisfied, and compute the corresponding $\tau_{s(1), \ldots, s(k)}$. In all the examples, $\alpha$ combines objects defined on each of the subintervals
\begin{equation}
\label{Eq:Intervals}
J_1 = [1, \ldots, s(1)], \quad J_2 = [s(1) + 1, \ldots, s(1) + s(2)], \quad \ldots, \quad J_k = [n - s(k) + 1, \ldots, n]
\end{equation}
into a single object on the interval $[1, \ldots, n]$, in a natural way. We will denote by $(\hat{1}_{s(1)}, \ldots, \hat{1}_{s(k)})$ the partition on $[n]$ whose blocks are these intervals. Note that $\Pi_i$ is \emph{not} assumed to have a maximal element, so $\hat{1}_i$ only denotes the maximal element of $\mc{P}(i)$.
\end{Remark}

\begin{Notation}[Background on partitions]
Denote $\Sing(\pi)$ the single-element blocks of a partition $\pi$, and $\mi{Pair}(\pi)$ the two-element blocks.

A partition $\pi$ of an ordered set $\Lambda$ is \emph{non-crossing} if there are no two blocks $U \neq V$ of $\pi$ with $i, k \in U$, $j, l \in V$, and $i < j < k < l$. The set of non-crossing partitions is denotes by $\mc{NC}(\Lambda)$, or $\mc{NC}(n)$ in case $\Lambda = [n] = \set{1, 2, \ldots, n}$. A block $U \in \pi \in \mc{NC}(n)$ containing $i_0$ has \emph{depth $k$} if $k$ is the largest integer (starting with $0$) for which there is exist $i_k < i_{k-1} < \ldots < i_1 < i_0 < j_1 < \ldots < j_k$ such that all $i_0, i_1, \ldots, i_k$ belong to different blocks of $\pi$ while $i_u \stackrel{\pi}{\sim} j_u$, $u = 1, \ldots, k$. A block is \emph{outer} if it has depth zero, and \emph{inner} otherwise. Denote $\Outer(\pi)$ the outer blocks of $\pi$. Denote $\mc{NC}_{\geq 2}(\Lambda)$ the non-crossing partitions with no singletons. The \emph{interval partitions} $\Int(\Lambda)$ are partitions whose blocks are intervals.
\end{Notation}

\begin{Prop}
\label{Prop:P12}
Denote by $\mc{P}_{1,2}(n)$, the \emph{incomplete matchings}, the partitions of $[n]$ into pairs and singletons. Equip it with the poset structure it inherits from the usual refinement order on partitions. Then the M{\"o}bius function on this poset is
\[
\mu(\pi, \sigma) = (-1)^{\abs{\mi{Pair}(\sigma)} - \abs{\mi{Pair}(\pi)}}.
\]
Also for this poset, $\tau_{s(1), \ldots, s(k)} = \tau \wedge (\hat{1}_{s(1)}, \ldots, \hat{1}_{s(k)})$.
\end{Prop}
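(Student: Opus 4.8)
The plan is to treat the two assertions separately, relying in each case on the simple structure of a refinement order whose blocks never exceed size two.

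For the Möbius function, I would first describe the interval $[\pi, \sigma]$ explicitly. If $\pi \leq \sigma$ in $\mc{P}_{1,2}(n)$, then every pair of $\sigma$ is either already a pair of $\pi$ or is obtained by merging two singletons of $\pi$; call the latter the \emph{new pairs}, and let $m = \abs{\mi{Pair}(\sigma)} - \abs{\mi{Pair}(\pi)}$ be their number. Any $\tau$ with $\pi \leq \tau \leq \sigma$ must contain all pairs of $\pi$, and each of its remaining pairs must be a new pair of $\sigma$; since the new pairs of $\sigma$ occupy pairwise disjoint points, these merges can be made independently. Thus $\tau \mapsto \mi{Pair}(\tau) \setminus \mi{Pair}(\pi)$ is an order isomorphism of $[\pi,\sigma]$ onto the Boolean lattice $B_m$ of subsets of the $m$ new pairs, and in particular $\abs{\mi{Pair}(\tau)} - \abs{\mi{Pair}(\pi)}$ equals the cardinality of the corresponding subset. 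I would then verify the defining recursion \eqref{Eq:Mobius} for the candidate function directly: summing the claimed value over the interval gives
\[
\sum_{\tau: \pi \leq \tau \leq \sigma} (-1)^{\abs{\mi{Pair}(\tau)} - \abs{\mi{Pair}(\pi)}} = \sum_{j=0}^m \binom{m}{j} (-1)^j = (1-1)^m,
\]
which is $1$ when $m = 0$ (that is, $\pi = \sigma$) and $0$ otherwise. By the uniqueness characterizing the Möbius function, this establishes the formula.

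For the second assertion, write $\rho = (\hat{1}_{s(1)}, \ldots, \hat{1}_{s(k)})$ for the interval partition of $[n]$ with blocks $J_1, \ldots, J_k$. The image of $\alpha$ consists exactly of those matchings all of whose pairs lie within a single interval $J_i$, which is precisely the set $\set{\sigma \in \mc{P}_{1,2}(n): \sigma \leq \rho}$. Next I would compute $\tau \wedge \rho$ in the full partition lattice $\mc{P}(n)$: its blocks are the nonempty intersections of the blocks of $\tau$ with the intervals $J_i$. Since each block of $\tau$ has at most two elements, each such intersection has at most two elements as well, and hence $\tau \wedge \rho$ again lies in $\mc{P}_{1,2}(n)$; concretely it is obtained from $\tau$ by breaking precisely those pairs that straddle two intervals. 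It then remains to verify the required interval identity. For $\sigma \in \mc{P}_{1,2}(n)$, the conditions $\sigma \leq \tau$ and $\sigma \leq \rho$ hold simultaneously if and only if $\sigma$ refines the common refinement $\tau \wedge \rho$; combined with the description of the image of $\alpha$ above, this gives
\[
\set{\sigma \in \alpha(\mc{P}_{1,2}(s(1)) \times \ldots \times \mc{P}_{1,2}(s(k))): \sigma \leq \tau} = \set{\sigma \in \mc{P}_{1,2}(n): \sigma \leq \tau \wedge \rho},
\]
so $\tau_{s(1), \ldots, s(k)} = \tau \wedge \rho$, as claimed. When $\tau$ already lies in the image of $\alpha$, no pairs are broken and $\tau \wedge \rho = \tau$, consistent with the convention in Theorem~\ref{Thm:General-product}.

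I expect the only genuinely delicate point to be the bookkeeping about where the meet lives: $\rho$ is not itself an element of $\mc{P}_{1,2}(n)$, so $\tau \wedge \rho$ must be taken in $\mc{P}(n)$, and the substance of the claim is that this meet nevertheless returns to $\mc{P}_{1,2}(n)$, because intersecting a two-element block with an interval can only shrink it. Once this is observed, the interval identity is a formal consequence of the universal property of the common refinement. The Boolean-lattice description underlying the Möbius computation is otherwise straightforward, its only subtlety being the independence of the merges, which rests on the pairs of $\sigma$ occupying disjoint points.
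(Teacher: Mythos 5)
Your proof is correct and follows essentially the same route as the paper: both identify the interval $[\pi,\sigma]$ with a Boolean lattice (the paper writes it as $\mc{P}(2)^{k-u}$ and invokes multiplicativity of the M{\"o}bius function, while you verify the defining recursion \eqref{Eq:Mobius} directly via the binomial theorem, which amounts to the same computation). The paper's proof is silent on the second assertion, treating $\tau_{s(1),\ldots,s(k)} = \tau \wedge (\hat{1}_{s(1)},\ldots,\hat{1}_{s(k)})$ as immediate from the definitions; your explicit verification—that the image of $\alpha$ is exactly $\set{\sigma \leq \rho}$, that the meet taken in $\mc{P}(n)$ lands back in $\mc{P}_{1,2}(n)$ because intersecting blocks of size at most two with intervals cannot enlarge them, and that the interval identity then follows from the universal property of the meet—is a sound and welcome filling-in of that gap.
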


\begin{proof}
It suffices to note that, denoting by $U$ the pairs and $V$ the singleton blocks,
\[
[(U_1, \ldots, U_u, V_1', \ldots, V_v'), (U_1, \ldots, U_k, V_1, \ldots, V_\ell)] \simeq [\hat{0}, (U_{u+1}, \ldots, U_k)] \simeq \mc{P}(2)^{k-u}. \qedhere
\]
\end{proof}

\begin{Prop}
\label{Prop:NC12}
Denote by $\mc{INC}_{1,2}(n)$, the \emph{incomplete non-crossing matchings}, the non-crossing partitions of $[n]$ into pairs and singletons, such that all singletons are outer. Equip it with the poset structure inherited from $\mc{P}_{1,2}(n)$. Then the M{\"o}bius function on this poset is
\begin{multline*}
\mu((U_1, \ldots, U_u, V_1', \ldots, V_v'), (U_1, \ldots, U_k, V_1, \ldots, V_\ell)) \\
=
\begin{cases}
(-1)^{k-u}, & \forall (u+1) \leq i \leq k: U_i \in \Outer(\pi), \\
0, & \text{ otherwise}.
\end{cases}
\end{multline*}
For this poset, $\tau_{s(1), \ldots, s(k)}$ is given by the same expression as for $\mc{P}_{1,2}$.
\end{Prop}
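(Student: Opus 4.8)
The plan is to analyze an arbitrary interval $[\pi,\sigma]$ in $\mc{INC}_{1,2}(n)$ and identify it with the lattice of order ideals of an auxiliary poset, after which the M{\"o}bius value follows from a standard distributive-lattice computation. Write the smaller element as $\pi = (U_1, \ldots, U_u, V_1', \ldots, V_v')$ and the larger as $\sigma = (U_1, \ldots, U_k, V_1, \ldots, V_\ell)$, so that $U_1, \ldots, U_u$ are common to both while each ``new'' pair $U_{u+1}, \ldots, U_k$ is split into two singletons in $\pi$. Every $\rho$ with $\pi \le \rho \le \sigma$ keeps the permanent pairs $U_1, \ldots, U_u$ together with some subset $S \subseteq \set{U_{u+1}, \ldots, U_k}$ of the new pairs, the rest being split into singletons; since $\sigma$ is non-crossing, so is every such $\rho$, and the nesting relation makes $\set{U_{u+1}, \ldots, U_k}$ into a poset $P$ (with $U_i \le_P U_j$ when $U_i$ is nested inside $U_j$).

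The key preliminary observation is that no new pair $U_i$ can be nested inside a permanent pair $U_j$ ($j \le u$): otherwise the two singletons into which $U_i$ splits in $\pi$ would sit inside the pair $U_j$ of $\pi$, making them inner and contradicting $\pi \in \mc{INC}_{1,2}(n)$. Consequently a point of a split new pair can be nested in a pair of $\rho$ only if that pair lies in $S$, so $\rho$ has all singletons outer precisely when, for every $U_i \notin S$, no $U_j \in S$ nests $U_i$; equivalently, $S$ is an order ideal of $P$. This identifies $[\pi,\sigma]$ with the distributive lattice $J(P)$ of order ideals of $P$ (with $\pi \leftrightarrow \varnothing$ and $\sigma \leftrightarrow P$), refining the isomorphism $[\pi,\sigma] \simeq \mc{P}(2)^{k-u}$ of Proposition~\ref{Prop:P12}, which now holds exactly when $P$ carries no relations.

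I would then invoke the standard fact that $\mu_{J(P)}(\hat 0, \hat 1) = (-1)^{\abs{P}}$ if $P$ is an antichain and $0$ otherwise. Since $P$ is an antichain iff no new pair is nested inside another, and nesting inside permanent pairs has already been excluded, $P$ is an antichain exactly when each $U_i$ with $u+1 \le i \le k$ is outer in the larger partition $\sigma$. This gives $\mu(\pi,\sigma) = (-1)^{k-u}$ in that case and $0$ otherwise, as claimed. The main obstacle is precisely the order-ideal characterization of $[\pi,\sigma]$, which rests on combining the non-crossing (laminar) structure with the outer-singleton constraint that forces the ``no nesting inside permanent pairs'' step.

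For the formula for $\tau_{s(1), \ldots, s(k)}$, the injection $\alpha$ juxtaposes an incomplete non-crossing matching on each interval $J_i$ from \eqref{Eq:Intervals}; because pairs then never straddle interval boundaries, all singletons stay outer, so $\alpha$ does land in $\mc{INC}_{1,2}(n)$ with image exactly $\set{\sigma : \sigma \le (\hat 1_{s(1)}, \ldots, \hat 1_{s(k)})}$. The defining condition then reads $\set{\sigma \in \mc{INC}_{1,2}(n): \sigma \le \tau,\ \sigma \le (\hat 1_{s(1)}, \ldots, \hat 1_{s(k)})}$, which is the principal order ideal below $\tau \wedge (\hat 1_{s(1)}, \ldots, \hat 1_{s(k)})$. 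The one point needing care is that this meet again lies in $\mc{INC}_{1,2}(n)$: the meet splits exactly the pairs of $\tau$ straddling two intervals, and were a within-interval pair of $\tau$ to nest such a freed point it would have to cross the straddling pair, contradicting that $\tau$ is non-crossing. Hence the resulting singletons remain outer and $\tau_{s(1),\ldots,s(k)} = \tau \wedge (\hat 1_{s(1)}, \ldots, \hat 1_{s(k)})$, the same expression as for $\mc{P}_{1,2}$.
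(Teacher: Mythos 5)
Your proof is correct, and it reaches the result by a somewhat different route than the paper. The paper factors $[\pi,\sigma] \simeq [\hat{0},(U_{u+1},\ldots,U_k)]$ into a product of intervals, one for each outer new pair together with all the new pairs nested inside it; a factor consisting of a lone pair contributes $\mu = -1$, and for a larger factor the paper shows $\mu = 0$ directly from the defining recurrence \eqref{Eq:Mobius}, using the observation that the elements strictly below the top of such a factor have a maximum (obtained by splitting the outermost pair $\set{i,j}$ into two singletons), so the Möbius sums telescope; multiplicativity of $\mu$ on products then finishes. You instead identify the entire interval with the distributive lattice $J(P)$ of order ideals of the nesting poset $P$ on the new pairs, and quote the standard fact that $\mu_{J(P)}(\hat{0},\hat{1})$ equals $(-1)^{\abs{P}}$ if $P$ is an antichain and $0$ otherwise. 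The combinatorial core is the same --- your explicit lemma that no new pair can be nested inside a permanent pair (which the paper uses only implicitly when it identifies the interval and recalls that all singletons are outer), plus the laminar structure of nesting --- and in fact the paper's product decomposition is exactly the splitting of your forest $P$ into its component trees, with its telescoping step being the usual proof of the distributive-lattice formula in that special case. What your packaging buys is a citable key lemma and, importantly, an actual proof of the $\tau_{s(1),\ldots,s(k)}$ claim: you verify that the image of $\alpha$ is the principal ideal below $(\hat{1}_{s(1)},\ldots,\hat{1}_{s(k)})$ and that the meet $\tau \wedge (\hat{1}_{s(1)},\ldots,\hat{1}_{s(k)})$ remains in $\mc{INC}_{1,2}(n)$, a point the paper asserts without proof. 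What the paper's route buys is self-containedness, requiring nothing beyond the Möbius recurrence itself.
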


\begin{proof}
Clearly
\[
[(U_1, \ldots, U_u, V_1', \ldots, V_v'), (U_1, \ldots, U_k, V_1, \ldots, V_\ell)] \simeq [\hat{0}, (U_{u+1}, \ldots, U_k)].
\]
Moreover this interval is the product of intervals from $\hat{0}$ to a partition with a single outer block. If that partition is simply $\set{U}$, the M{\"o}bius function $\mu(\hat{0}_2, \set{U}) = (-1)$. On the other hand, if it is a larger partition with the single outer block $\set{i,j}$, recalling that all singletons in $\mc{INC}_{1,2}$ are outer, we see that
\[
\sigma < (U_1', U_2', \ldots, \set{i,j}) \quad \Leftrightarrow \sigma \leq (U_1', \ldots, U_{s-1}', \set{i}, \set{j}).
\]
and so from property \eqref{Eq:Mobius},
\[
\mu(\hat{0}, (U_1', U_2', \ldots, \set{i,j})) = \sum_{\sigma \leq (U_1', U_2', \ldots, \set{i,j})} \mu(\hat{0}, \sigma) - \sum_{\sigma < (U_1', U_2', \ldots, \set{i,j})} \mu(\hat{0}, \sigma) = 0. \qedhere
\]
\end{proof}

\begin{Defn}
Denote by $\mc{IP}(n)$, the \emph{incomplete partitions}, the collection
\[
\set{(\pi, S): \pi \in \mc{P}(n), S \subset \pi}.
\]
Here, and in the subsequent examples, the elements of $S$ will be called \emph{open blocks}, those of $\pi \setminus S$ \emph{closed blocks}. Denote $\bigcup S = \bigcup_{V \in S} V$ the union of all the open blocks. Equip $\mc{IP}(n)$ with the poset structure
\[
(\pi, S) \leq (\sigma, T) \text{ if } U \in \pi \setminus S \Rightarrow U \in \sigma \setminus T \text{ and } \pi|_{\bigcup S} \leq \sigma|_{\bigcup S}.
\]
\end{Defn}

\begin{Prop}
\label{Prop:IP}
The M{\"o}bius function on $\mc{IP}(n)$ is
\[
\mu((\hat{0}, \hat{0}), (\pi, S)) =
\begin{cases}
(-1)^{n - \abs{S}} \prod_{V \in S} (\abs{V} - 1)!, & \forall U \in \pi \setminus S: \abs{U} = 1, \\
0, & \text{ otherwise}.
\end{cases}
\]
Also for this poset, $(\tau, S)_{s(1), \ldots, s(k)} = (\tau \wedge (\hat{1}_{s(1)}, \ldots, \hat{1}_{s(k)}), T)$, where $U \in (\tau \wedge (\hat{1}_{s(1)}, \ldots, \hat{1}_{s(k)})) \setminus T$ if and only if $U \in \tau \setminus S$. In particular, $(\tau, S)_{s(1), \ldots, s(k)} = (\hat{0}_n, \hat{0}_n)$ if and only if $\tau \wedge (\hat{1}_{s(1)}, \ldots, \hat{1}_{s(k)}) = \hat{0}_n$ and all singletons of $\tau$ are open.
\end{Prop}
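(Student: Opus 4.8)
The plan is to compute the M{\"o}bius function by decomposing the principal order ideal $[(\hat{0}_n, \hat{0}_n), (\pi, S)]$ into a direct product of one factor per block of $\pi$, and then to invoke multiplicativity of $\mu$ on products, exactly as in Theorem~4 of \cite{Rota}. First I would unwind the order relation to describe an arbitrary $(\sigma, T) \leq (\pi, S)$: since every closed block of $\sigma$ must be a closed block of $\pi$, and since $\sigma|_{\bigcup T} \leq \pi|_{\bigcup T}$ forces every open block of $\sigma$ to sit inside a single block of $\pi$, the datum of $(\sigma, T)$ splits independently over the blocks of $\pi$. Concretely, each open block $V \in S$ is refined by $(\sigma, T)$ into an arbitrary all-open partition of $V$, contributing a factor isomorphic to $\mc{P}(\abs{V})$ with its usual top $\hat{1}_{\abs{V}}$; each closed block $U \in \pi \setminus S$ is either kept as the single closed block $U$ or broken into an arbitrary all-open partition of $U$, contributing a factor isomorphic to $\mc{P}(\abs{U})$ with one extra element $\hat{1}^\ast$ adjoined above its entire top.

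Granting this product description, multiplicativity reduces the computation to the two kinds of factors. For an open block the factor is the ordinary partition lattice, so $\mu(\hat{0}, \hat{1}_{\abs{V}}) = (-1)^{\abs{V} - 1}(\abs{V} - 1)!$. For a closed block $U$, using the defining relation \eqref{Eq:Mobius} for the adjoined top $\hat{1}^\ast$ exactly as in the proof of Proposition~\ref{Prop:NC12},
\[
\mu(\hat{0}, \hat{1}^\ast) = - \sum_{x \in \mc{P}(\abs{U})} \mu(\hat{0}, x) = \begin{cases} -1, & \abs{U} = 1, \\ 0, & \abs{U} \geq 2, \end{cases}
\]
since $\sum_{\hat{0} \leq x \leq \hat{1}} \mu(\hat{0}, x)$ vanishes unless $\hat{0} = \hat{1}$. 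Thus a closed block of size $\geq 2$ kills the product, recovering the dichotomy in the statement, while each closed singleton contributes $-1$. Multiplying over all blocks and bookkeeping the signs --- using $\sum_{V \in S}(\abs{V} - 1) = \abs{\bigcup S} - \abs{S}$ and that the number of closed (singleton) blocks equals $n - \abs{\bigcup S}$ --- collapses the total sign to $(-1)^{n - \abs{S}}$, giving the claimed formula.

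For the meet-type element $(\tau, S)_{s(1), \ldots, s(k)}$, I would first record that the image of $\alpha$ is precisely the set of $(\rho, R)$ whose blocks are confined to the intervals $J_1, \ldots, J_k$, equivalently $\rho \leq (\hat{1}_{s(1)}, \ldots, \hat{1}_{s(k)})$, and that this image is an order ideal: if $(\sigma, T) \leq (\rho, R)$ with $\rho$ interval-confined, then both the closed blocks of $\sigma$ (being closed blocks of $\rho$) and its open blocks (sitting inside blocks of $\rho$) are interval-confined. Writing $\tau' = \tau \wedge (\hat{1}_{s(1)}, \ldots, \hat{1}_{s(k)})$ with the open/closed marking $T'$ prescribed in the statement, I would then verify for interval-confined $(\sigma, T)$ that $(\sigma, T) \leq (\tau, S)$ iff $(\sigma, T) \leq (\tau', T')$: the closed-block condition matches because closed blocks of $\sigma$ are interval-confined and hence are closed blocks of $\tau$ exactly when they are closed blocks of $\tau'$, while the refinement condition matches because $\sigma|_{\bigcup T}$ is itself interval-confined, so $\sigma|_{\bigcup T} \leq \tau|_{\bigcup T}$ is equivalent to $\sigma|_{\bigcup T} \leq (\tau \wedge (\hat{1}_{s(1)}, \ldots, \hat{1}_{s(k)}))|_{\bigcup T}$. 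Since $(\tau', T')$ lies in the downward-closed image of $\alpha$, the set of interval-confined elements below $(\tau, S)$ coincides with the full order ideal below $(\tau', T')$, which is exactly the property required by Theorem~\ref{Thm:General-product}. The final ``in particular'' claim then drops out: $(\tau', T') = (\hat{0}_n, \hat{0}_n)$ forces $\tau' = \hat{0}_n$, i.e.\ $\tau \wedge (\hat{1}_{s(1)}, \ldots, \hat{1}_{s(k)}) = \hat{0}_n$, and the absence of closed blocks in $T'$ says no singleton of $\tau$ is closed.

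The main obstacle is the first paragraph: justifying that the interval genuinely splits as a direct product with the asymmetric ``adjoined top'' factor for closed blocks. Everything downstream --- multiplicativity, the vanishing computation, the sign count, and the $\alpha$ analysis --- is then routine, but that structural decomposition, and in particular pinning down that a closed block can only either persist intact or dissolve entirely into open sub-blocks (never acquiring a closed proper sub-block, since proper subsets of $U$ are not blocks of $\pi$), is where the real content lies.
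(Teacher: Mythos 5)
Your proof is correct and takes essentially the same route as the paper's: the same block-by-block product decomposition of $[(\hat{0},\hat{0}),(\pi,S)]$, the same identification of the open-block factor with $\mc{P}(\abs{V})$, and the same use of \eqref{Eq:Mobius} to kill closed blocks of size $\geq 2$ (your ``adjoined top'' $\hat{1}^\ast$ computation is exactly the paper's observation that $\set{(\sigma, T) < (\set{U}, \emptyset)} = \set{(\sigma, T) \leq (\set{U}, \set{U})}$). The only difference is one of detail: you spell out the sign bookkeeping and the order-ideal verification for $(\tau,S)_{s(1),\ldots,s(k)}$, which the paper compresses into ``the formula follows'' and ``follows from the definition of the order.''
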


\begin{proof}
Clearly
\[
\begin{split}
[(\hat{0}, \hat{0}), (\pi, S)]
\simeq \prod_{U \in \pi \setminus S} [(\hat{0}, \hat{0}), (\set{U}, \emptyset)] \times \prod_{V \in S} [(\hat{0}, \hat{0}), (\set{V}, \set{V})]
\end{split}
\]
and $[(\hat{0}, \hat{0}), (\set{V}, \set{V})] \simeq [\hat{0}, \set{V}] \simeq \mc{P}(\abs{V})$, so $\mu((\hat{0}, \hat{0}), (\set{V}, \set{V})) = (-1)^{\abs{V} - 1} (\abs{V} - 1)!$. Also
\[
\set{(\sigma, T) < (\set{U}, \emptyset)} = \set{(\sigma, T) \leq (\set{U}, \set{U})}
\]
so using property~\eqref{Eq:Mobius}, $\mu((\hat{0}, \hat{0}), (\set{U}, \emptyset)) = 0$ unless $\abs{U} = 1$. The formula for the M{\"o}bius function follows. The final formula follows from the definition of the order.
\end{proof}

\begin{Prop}
\label{Prop:INC}
Denote by $\mc{INC}(n)$, the \emph{incomplete non-crossing partitions} (called the linear non-crossing half-permutations in \cite{Kusalik-Mingo-Speicher}), the collection
\[
\set{(\pi, S): \pi \in \mc{NC}(n), S \subset \Outer(\pi)}.
\]
Equip it with the poset structure inherited from $\mc{IP}(n)$. Then the posets $\mc{INC}(n)$ and $\mc{INC}_{1,2}(2n)$ are isomorphic. Under this isomorphism, partitions with $\ell$ open blocks are mapped to partitions with $2 \ell$ singletons, and partitions with $k$ closed blocks are mapped to partitions with $k$ pairs at even depth. In particular, the M{\"o}bius function on this poset is
\begin{equation}
\label{Eq:Mobius-INC}
\mu((\hat{0}, \hat{0}), (\pi, S)) =
\begin{cases}
(-1)^{n - \abs{S}}, & \pi \in \Int(n), \forall U \in \pi \setminus S: \abs{U} = 1, \\
0, & \text{ otherwise}.
\end{cases}
\end{equation}
For this poset, $\tau_{s(1), \ldots, s(k)}$ is given by the same expression as for $\mc{IP}$.
\end{Prop}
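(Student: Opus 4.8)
The plan is to establish the poset isomorphism $\mc{INC}(n) \simeq \mc{INC}_{1,2}(2n)$ explicitly, and then transport the M{\"o}bius function computed in Proposition~\ref{Prop:NC12} across it. The M{\"o}bius formula \eqref{Eq:Mobius-INC} and the description of $\tau_{s(1),\ldots,s(k)}$ should both fall out once the isomorphism and the claimed block correspondence (open blocks $\leftrightarrow$ pairs of singletons, closed blocks $\leftrightarrow$ pairs at even depth) are in hand.

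First I would construct the bijection on objects. The natural ``doubling'' map sends each point $i \in [n]$ to a pair of adjacent points $\{2i-1, 2i\} \subset [2n]$; a block $V$ of $\pi \in \mc{NC}(n)$ thus determines a cluster of points in $[2n]$. Around a closed block I would draw a non-crossing pairing of these doubled points (this is the standard encoding of a non-crossing partition by a non-crossing pair partition via ``opening'' and ``closing'' brackets), and for each open block I would leave its doubled points as $2\abs{V}$ singletons. I would verify that non-crossingness of $\pi$ corresponds to non-crossingness of the image, that the outer-block condition $S \subset \Outer(\pi)$ is exactly what guarantees all image singletons are outer (so the image lands in $\mc{INC}_{1,2}(2n)$), and that the depth-parity statement holds: a block of $\pi$ at depth $d$ produces a pair at depth $2d$, hence closed blocks give pairs at even depth. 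The order-preservation in both directions should follow because refinement on $\mc{INC}(n)$ restricted to $\bigcup S$ matches the coarsening of singleton clusters in $\mc{INC}_{1,2}(2n)$, and the ``closed stays closed'' condition in the $\mc{IP}$-order corresponds to ``paired stays paired''.

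Once the isomorphism is in place, I would read off the M{\"o}bius function. Under the correspondence, $(\pi,S)$ with $\abs{S}$ open blocks maps to a partition with $2\abs{S}$ singletons and $n - \abs{S}$ pairs; by Proposition~\ref{Prop:NC12} the M{\"o}bius value from $\hat{0}$ is $(-1)^{k-u}$ when all the relevant pairs are outer and $0$ otherwise. Translating the ``all pairs outer'' condition back through the depth-parity statement, the only surviving case is when every closed block of $\pi$ sits at depth zero and every open block's interior is empty --- which forces $\pi$ to be an interval partition and every closed block to be a singleton, giving exactly the conditions in \eqref{Eq:Mobius-INC} with sign $(-1)^{n-\abs{S}}$ (noting $k - u$ pairs correspond to $n - \abs{S}$ closed-block singletons). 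The identification of $\tau_{s(1),\ldots,s(k)}$ with the $\mc{IP}$-expression is inherited directly, since the isomorphism intertwines the meet with $(\hat{1}_{s(1)},\ldots,\hat{1}_{s(k)})$; I would check this by confirming that the doubled interval partition $(\hat{1}_{s(1)},\ldots,\hat{1}_{s(k)})$ on $[n]$ corresponds to the analogous one on $[2n]$.

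The main obstacle will be pinning down the isomorphism cleanly and proving order-preservation \emph{in both directions}, especially verifying that the depth of a block doubles exactly and that the outer/inner dichotomy is faithfully tracked. The encoding of non-crossing partitions by non-crossing matchings is classical, but here the presence of open blocks (decorated singletons that must remain outer) adds bookkeeping: I must confirm that opening a bracket for an open block --- i.e.\ leaving singletons rather than pairing --- does not create a forbidden inner singleton, and that coarsening within $\bigcup S$ on one side matches merging singleton clusters on the other. Once these structural verifications are complete, the M{\"o}bius and $\tau_{s(1),\ldots,s(k)}$ statements are essentially transport along the isomorphism, so the combinatorial bijection is where the real work lies.
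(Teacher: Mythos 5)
Your overall strategy---double the points, encode closed blocks by the standard bracket pairing, transport the M{\"o}bius function from Proposition~\ref{Prop:NC12}---is exactly the paper's, but your construction of the bijection fails at the crucial point: the open blocks. You send an open block $V$ to $2\abs{V}$ singletons, keeping no pairs at all. This map is not injective: in $\mc{INC}(2)$, the element with the single open block $\set{1,2}$ and the element with two open singletons $\set{1},\set{2}$ both map to the matching consisting of four singletons, so a strict inequality in $\mc{INC}(2)$ collapses to an equality in $\mc{INC}_{1,2}(4)$, and there can be no poset isomorphism. (It also contradicts the statement you are proving, and your own later count, that $\ell$ open blocks produce $2\ell$---not $2\sum_{V\in S}\abs{V}$---singletons.) The correct map, which is the paper's, keeps the linking pairs of an open block and splits only the covering pair: the open block $(i_1<\cdots<i_k)$ goes to $(i_1),(\bar{i}_k),(\bar{i}_1,i_2),\ldots,(\bar{i}_{k-1},i_k)$. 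Then each open block contributes exactly two singletons, merging two open blocks $(i_1<\cdots<i_k)$ and $(j_1<\cdots<j_\ell)$ corresponds to pairing off $\bar{i}_k$ with $j_1$, and closing an open block corresponds to pairing off $i_1$ with $\bar{i}_k$---this is precisely what makes the map an order isomorphism.

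A second, related error: the claim that a block of $\pi$ at depth $d$ produces a pair at depth $2d$ is false, and with it your identification of where the M{\"o}bius function is supported. A closed block of size at least $2$ always produces nested pairs (its covering pair $(i_1,\bar{i}_k)$ strictly contains its linking pairs), irrespective of its depth: the closed block $\set{1,2}$ at depth $0$ gives $(1,\bar{2})$ nesting $(\bar{1},2)$, so the image is not an interval matching and $\mu=0$ there, whereas your criterion ``every closed block at depth zero'' would keep it. Likewise a closed $\set{2}$ nested in an open $\set{1,3}$ gives the pair $(2,\bar{2})$ at depth $1$, not $2$. The correct translation of ``all pairs outer'' (equivalently, $\sigma$ an interval matching) is: $\pi\in\Int(n)$ and every closed block of $\pi$ is a singleton---exactly the condition in \eqref{Eq:Mobius-INC}; the even-depth assertion is a pure counting statement, which the paper obtains by a recursive argument rather than by depth-doubling. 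With the corrected bijection, the count $\abs{\mi{Pair}(\sigma)}=n-\abs{S}$ and the transport of Proposition~\ref{Prop:NC12} go through as you intend, and the description of $\tau_{s(1),\ldots,s(k)}$ follows from the definition of the order.
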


\begin{proof}
We exhibit an bijection between $\mc{INC}(n)$ and $\mc{INC}_{1,2}(\set{1, \bar{1}, \ldots, n, \bar{n}})$. A closed block $(i_1 < i_2 < \ldots < i_k)$ is replaced by blocks $(i_1, \bar{i}_k), (\bar{i}_1, i_2), \ldots, (\bar{i}_{k-1}, i_k)$, while the open block with these elements is replaced by $(i_1), (\bar{i}_k), (\bar{i}_1, i_2), \ldots, (\bar{i}_{k-1}, i_k)$. This is easily seen to be a bijection. Moreover, combining two (outer) open blocks $(i_1 < i_2 < \ldots < i_k)$ and $(j_1 < j_2 < \ldots < j_\ell)$ with $i_k < j_1$ corresponds to pairing off $\bar{i}_k$ and $j_1$, while closing the open block $(i_1 < i_2 < \ldots < i_k)$ corresponds to pairing off $i_1$ and $\bar{i}_k$. So this map is a poset isomorphism. Finally, each open block of $\pi \in \mc{INC}(n)$ produces two singletons in the image. The statement about closed blocks follows from a recursive argument.

Next, for $\sigma \in \mc{INC}_{1,2}(2n)$, from Proposition~\ref{Prop:NC12}
\[
\mu_{\mc{INC}_{1,2}}(\hat{0}, \sigma) =
\begin{cases}
(-1)^{\abs{\mi{Pair}(\sigma)}}, & \sigma \in \Int(2n), \\
0, \text{otherwise}.
\end{cases}
\]
Clearly the bijection above maps incomplete interval matchings onto the set of incomplete interval partitions all of whose closed blocks are singletons. It remains to note that if $\sigma$ is mapped to $(\pi, S)$, then $\abs{\mi{Pair}(\sigma)} = \frac{1}{2} (2 n - \abs{\Sing(\sigma)}) = n - \abs{S}$. The formula for $\tau_{s(1), \ldots, s(k)}$ follows from the definition of the order.
\end{proof}

\begin{Defn}
Denote by $\mc{IPRM}(n)$, the \emph{incomplete permutations} (sometimes called partial permutations \cite{Borwein-Injective-partial}, although this term has also been used for different objects), the collection of maps
\[
\mc{IPRM}(n) = \set{(\Lambda, f): \Lambda \subset [n], f : \Lambda \rightarrow [n] \text{ injective}}.
\]
Equip $\mc{IPRM}(n)$ with the following poset structure:
\[
(\Lambda, f) \leq (\Omega, g) \text{ if } \Lambda \subset \Omega \text{ and } g|_\Lambda = f.
\]
\end{Defn}

\begin{Prop}
$\mc{IPRM}(n)$ is isomorphic as a poset to pairs $(\pi, S)$, where $\pi$ is a partition of $[n]$ with an order on each block of the partition, $S$ is a collection of some blocks of this partition, and the order on the blocks in $\pi \setminus S$ is defined only up to a cyclic permutation. Equivalently, these are collections of words in $[n]$, where each letter appears exactly once, and some of the words are defined only up to cyclic order. In the poset structure, $(\pi, S) \leq (\sigma, T)$  if $U \in \pi \setminus S \Rightarrow U \in \sigma \setminus T$, the restriction of partitions $\pi|_{\bigcup S} \leq \sigma|_{\bigcup S}$, and the words corresponding to blocks of $\sigma$ combined out of blocks of $\pi$ are obtained by concatenating the words corresponding to these blocks of $\pi$, in some order (and the combined word possibly cyclically rotated if the block of $\sigma$ is closed).
\end{Prop}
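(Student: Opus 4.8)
The plan is to realize an incomplete permutation $(\Lambda, f) \in \mc{IPRM}(n)$ by its \emph{functional digraph}: the directed graph on the vertex set $[n]$ with an edge $i \to f(i)$ for each $i \in \Lambda$. Injectivity of $f$ forces every vertex to have in-degree at most one, and $f$ being a function forces out-degree at most one, so this digraph is a vertex-disjoint union of directed paths and directed cycles covering all of $[n]$ (an isolated vertex counting as a trivial path). I would define $\Phi(\Lambda, f) = (\pi, S)$ by letting $\pi$ be the partition of $[n]$ into the vertex sets of these components, taking $S$ to be the path-components and $\pi \setminus S$ the cycle-components, and recording on each block the order in which its vertices are visited. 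A path $a_1 \to \cdots \to a_m$ yields the word $a_1 \cdots a_m$ (an open block), and a cycle yields the same data read cyclically (a closed block); the cyclic ambiguity is exactly the ambiguity in choosing a starting vertex, so the closed-block word is well defined only up to rotation, matching the target description. The inverse sends a word $a_1 \cdots a_m$ to the arrows $a_1 \to \cdots \to a_m$ (leaving $a_m$ out of the domain) and a cyclic word additionally to the wrap-around arrow $a_m \to a_1$; rotating a cyclic word does not change the resulting arrows, so the inverse is well defined, and it is routine to check that $\Phi$ is a bijection. The key feature I will exploit throughout is that an element of $[n]$ lies in $\Lambda$ exactly when it is a non-final letter of an open block or any letter of a closed block.

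For the order-preserving direction, suppose $(\Lambda, f) \leq (\Omega, g)$, i.e. $g$ extends $f$. Then the digraph of $f$ is a spanning subgraph of that of $g$, so each $g$-component is a union of $f$-components. A cycle of $f$ already saturates the in- and out-degrees of its vertices, so $g$ cannot attach anything to it: every closed block of $\Phi(\Lambda,f)$ survives unchanged, with the same cyclic word, as a closed block of $\Phi(\Omega,g)$, which gives the condition that closed blocks persist. The remaining $f$-components are paths, which $g$ may concatenate and possibly close up; restricted to the open blocks the partition therefore only coarsens, giving $\pi|_{\bigcup S} \leq \sigma|_{\bigcup S}$, and since concatenating paths keeps each constituent path intact and in order, the word on a combined block is a concatenation (cyclically rotated when the block closes) of the constituent words. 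These are precisely the three defining conditions of the target order.

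The order-reflecting direction is where the real work lies, and I expect it to be the main obstacle. Here I am handed the abstract relation $(\pi, S) \leq (\sigma, T)$ and must recover the analytic statement that $g := \Phi^{-1}(\sigma,T)$ extends $f := \Phi^{-1}(\pi, S)$. I would verify this arrow by arrow. Each arrow of $f$ lies inside a single block of $\pi$. If that block is closed it is, by hypothesis, an unchanged closed block of $\sigma$ with the same cyclic word, so all its arrows (including the wrap-around) are arrows of $g$. If the block is open, its word $a_1 \cdots a_m$ appears, by the concatenation condition, as a contiguous in-order segment of the corresponding $\sigma$-word; hence each internal pair $a_t, a_{t+1}$ with $t < m$ stays adjacent, so $g(a_t) = a_{t+1} = f(a_t)$. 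The only letters of an open block without an outgoing $f$-arrow are its final letters, which are exactly the places where junction or wrap-around arrows of $g$ may newly appear, so no existing $f$-arrow is ever contradicted. Thus $\Lambda \subseteq \Omega$ and $g|_\Lambda = f$, i.e. $(\Lambda,f) \leq (\Omega,g)$. Combining the two directions shows that $\Phi$ is a poset isomorphism. The delicate bookkeeping is precisely the claim that ``concatenation preserves internal order,'' which must be checked to be simultaneously forced by extension of $f$ and sufficient to force it; once this equivalence is pinned down, the rest is formal.
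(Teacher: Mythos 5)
Your proof is correct and follows essentially the same route as the paper: your functional digraph components (paths and cycles) are exactly the paper's orbits of $f$, with paths giving the open blocks in $S$ and cycles the closed blocks, and your analysis of how $g$-components decompose into $f$-components matches the paper's description of $g$-orbits as persisting cycles or concatenations of $f$-paths interspersed with new singletons. The only difference is presentational: you verify the order-reflecting direction explicitly arrow by arrow, where the paper treats it as immediate from its description of the $g$-orbits.
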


\begin{proof}
Given $(\Lambda, f)$ as in the definition of $\mc{IPRM}(n)$, we define the partition $\pi'$ of $\Lambda \cup f(\Lambda)$ to be the partition into orbits of $f$, that is, largest subsets $\set{w_1, \ldots, w_\ell}$ such that $w_{i+1} = f(w_i)$. The block is in $\pi' \setminus S$ if $w_\ell \in \Lambda$, so that $f(w_{\ell}) = w_1$, and the block is in $S$ if $w_\ell \in f(\Lambda) \setminus \Lambda$. Complete $\pi'$ to a partition $\pi$ of $[n]$ by letting each elements in $[n] \setminus (\Lambda \cup f(\Lambda))$ be a singleton block in $S$. Note that each block in $S$ has an order structure $w_1 < w_2 < \ldots < w_\ell$, and each block in $\pi \setminus S$ has a cyclic order. Conversely, we can recover $(\Lambda, f)$ from $(\pi, S)$ by setting $[n] \setminus \Lambda$ to consist of the largest elements (according to the block order) of blocks in $S$, and $f$ defined by the order on the blocks.

Next, let $(\Lambda, f) \leftrightarrow (\pi, S)$, $(\Omega, g) \leftrightarrow (\sigma, T)$, and $(\Lambda, f) \leq (\Omega, g)$. The orbits of $g$, ordered according to the mapping structure of $g$, have the form either
\[
\set{w_1 < w_2 < \ldots < w_\ell} \in \pi \setminus S,
\]
or
\[
\set{v_{1, 0} < \ldots < v_{k(0), 0} < w_{1, 1} < \ldots < w_{\ell(1), 1} < v_{1, 1} < \ldots < v_{k(1), 1} < w_{1, 2} < \ldots < w_{\ell(2), 2} < \ldots },
\]
where all $v_{i, j} \in (\Omega \cup g(\Omega)) \setminus (\Lambda \cup f(\Lambda))$ are singletons in $S$ and each $\set{w_{1, j} < \ldots w_{\ell(j), j}} \in S$ is an orbit of $f$ in $S$. The description above follows.
\end{proof}

\begin{Example}
For $\Lambda = \set{2, 3, 5, 7, 8, 9} \subset [9]$ and
\[
f(2) = 5, f(3) = 8, f(5) = 4, f(7) = 3, f(8) = 7, f(9) = 9,
\]
the corresponding incomplete partition with ordered blocks is
\[
\pi = \set{(1), (2 < 5 < 4), (3 < 8 < 7 < 3), (6), (9)}, \quad S = \set{(1), (2 < 5 < 4), (6)}.
\]
\end{Example}

\begin{Prop}
\label{Prop:IPRM}
The M{\"o}bius function on $\mc{IPRM}(n)$ is
\[
\mu((\hat{0}, \hat{0}), (\pi, S)) = (-1)^{n - \abs{S}}.
\]
$(\Lambda, f)_{s(1), \ldots, s(k)} = (\Omega, g)$, where
\[
\Omega = \bigcup_{i=1}^k \set{x \in \Lambda \cap J_i: f(x) \in J_i}
\]
and $g = f|_\Omega$. In particular, $(\Lambda, f)_{s(1), \ldots, s(k)}$ equals the minimal element of $\mc{IPRM}(n)$ if for each $i$ and $x \in \Lambda \cap J_i$, $f(x) \not \in J_i$. We will call this final family \emph{incomplete derangements} and denote it by $\mc{ID}(s(1), \ldots, s(k))$.
\end{Prop}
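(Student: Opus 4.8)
The plan is to treat the two assertions separately: the M\"obius computation first, then the identification of $(\Lambda,f)_{s(1),\ldots,s(k)}$, which is essentially a matter of unwinding the order. For the M\"obius function I would imitate the factorization used for $\mc{IP}$ and $\mc{INC}$ (Propositions~\ref{Prop:IP}, \ref{Prop:INC}), working in the $(\pi,S)$ picture of the preceding proposition. The minimal element $(\hat 0,\hat 0)$ is the empty partial permutation, i.e. the $n$ open singletons, and the interval up to $(\pi,S)$ splits as a direct product over the blocks of $\pi$,
\[
\bigl[(\hat 0,\hat 0),(\pi,S)\bigr] \simeq \prod_{U\in\pi\setminus S}\bigl[(\hat 0,\hat 0),(\set{U},\emptyset)\bigr]\times\prod_{V\in S}\bigl[(\hat 0,\hat 0),(\set{V},\set{V})\bigr],
\]
since merges are confined to the elements of a single target block and cannot cross between target blocks. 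By multiplicativity of the M\"obius function it then suffices to evaluate two atomic intervals, a single \emph{open} block and a single \emph{closed} block, each of size $m$.

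For a single open block the top is a linear word $w_1<\cdots<w_m$; the order forces every smaller element to be all-open with blocks that are consecutive factors tiling the word, so this interval is the lattice of compositions of $m$, i.e. the Boolean lattice on the $m-1$ internal gaps, giving $\mu((\hat 0,\hat 0),(\set{V},\set{V}))=(-1)^{m-1}$. The hard part will be the closed block: the top is now a \emph{cyclic} word, its only closed block is the whole set, so every strictly smaller element is all-open and arises by cutting the $m$-cycle at a nonempty subset of its $m$ gaps into arcs. Thus the closed interval has $2^m$ elements against the $2^{m-1}$ of the open case, which is exactly why its M\"obius value will not vanish (contrast $\mc{IP}$, where the strict downset below a closed block coincides with the downset of the open one and forces $0$). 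Grouping by the number $c$ of arcs gives $\binom{m}{c}$ configurations, each of value $(-1)^{m-c}$ by the open-block computation, and \eqref{Eq:Mobius} yields
\[
\mu\bigl((\hat 0,\hat 0),(\set{U},\emptyset)\bigr)=-\sum_{c=1}^{m}\binom{m}{c}(-1)^{m-c}=(-1)^m,
\]
using $\sum_{c=0}^m\binom{m}{c}(-1)^{m-c}=0$. Multiplying the atomic contributions, the sign exponent becomes $\sum_{U\in\pi\setminus S}\abs{U}+\sum_{V\in S}(\abs{V}-1)=n-\abs{S}$, as claimed.

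For the second assertion I would spell out $\alpha$: on the intervals $J_1,\ldots,J_k$ of \eqref{Eq:Intervals} it sends a tuple of partial permutations to their disjoint union, so its image is exactly the set of $(\Lambda,f)$ with $f(x)\in J_i$ whenever $x\in\Lambda\cap J_i$. Given $\tau=(\Lambda,f)$, the element $(\Omega,g)$ of the statement discards every $x$ whose image leaves its interval; it plainly lies in the image of $\alpha$ and below $\tau$. It then remains to verify the defining set equality, that a member of the image of $\alpha$ is $\le(\Lambda,f)$ precisely when it is $\le(\Omega,g)$. Both inclusions fall out of the order relation $(\Omega',g')\le(\Lambda,f)\iff\Omega'\subseteq\Lambda$ and $f|_{\Omega'}=g'$: for $(\Omega',g')$ in the image of $\alpha$, the condition $x\in\Omega'\cap J_i\Rightarrow g'(x)\in J_i$ together with $g'=f|_{\Omega'}$ says exactly that each such $x$ has $f(x)\in J_i$, i.e. $x\in\Omega$. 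Finally $(\Omega,g)$ is the minimal element iff $\Omega=\emptyset$, i.e. iff no $x\in\Lambda\cap J_i$ has $f(x)\in J_i$, which is the incomplete-derangement description. The only genuinely delicate point in the whole argument is the closed, cyclic block; everything else is bookkeeping with the product factorization and the order relation.
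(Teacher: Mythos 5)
Your argument is correct: the block-by-block factorization, both atomic M\"obius values, the sign count $\sum_{U\in\pi\setminus S}\abs{U}+\sum_{V\in S}(\abs{V}-1)=n-\abs{S}$, and the two-sided verification of the set equality defining $(\Lambda,f)_{s(1),\ldots,s(k)}$ all check out. The overall skeleton (reduce by multiplicativity to a single orbit, then evaluate the open and closed atomic intervals) is also the paper's, but the key local step is done differently. The paper stays in the $(\Lambda,f)$ coordinates and observes that \emph{every} interval $[(\hat{0},\hat{0}),(\Lambda,f)]$ is in order-preserving bijection with the subsets of $\Lambda$ --- any smaller element is $(\Lambda',f|_{\Lambda'})$ for a unique $\Lambda'\subset\Lambda$ --- hence is a Boolean lattice with $\mu=(-1)^{\abs{\Lambda}}$; the open/closed dichotomy enters only through the count $\abs{\Lambda}=m-1$ versus $\abs{\Lambda}=m$. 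Your open-block computation is exactly this, but for the closed block, which you call the hard part, you fall back on the recursion \eqref{Eq:Mobius} and the binomial identity $\sum_{c=0}^{m}\binom{m}{c}(-1)^{m-c}=0$. That detour is sound but avoidable: your own parametrization of the strict down-set by nonempty subsets of the $m$ gaps extends to the full interval (the empty cut being the cyclic word itself), so the closed interval is the Boolean lattice on the $m$ gaps and $\mu=(-1)^m$ with no summation. What your route buys is the explicit contrast with $\mc{IP}$ --- there the strict down-set of a closed block coincides with the down-set of the corresponding open element, killing the M\"obius value, whereas here the closed interval is strictly larger --- which is a genuine explanation of why no vanishing occurs for $\mc{IPRM}$. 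On the second assertion you supply more than the paper's one-line appeal to the definition of the order; in particular your observation that the image of $\alpha$ is down-closed is exactly what upgrades the easy inclusion to the required two-sided equality of down-sets.
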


\begin{proof}
The blocks of $\pi$ are simply the orbits of $f$, with elements of $[n] \setminus \Lambda$ included as open singletons. To compute the M{\"o}bius function, it suffices to assume that $f$ has a single orbit. Elements smaller than $(\Lambda, f)$ are in an ordered bijection with subsets of $\Lambda$, and the M{\"o}bius function $(-1)^{\abs{\Lambda}}$. It remains to note that $\Lambda = [n]$ if the corresponding block is closed, and $\abs{\Lambda} = n-1$ if the corresponding block is open. Formulas for $(\Lambda, f)_{s(1), \ldots, s(k)}$ follow from the definition of the order.
\end{proof}

\section{Multiplication of Wick products}
\label{Sec:Wick}

Let $\mc{M}, \Gamma(\mc{M})$ be as in the beginning of the introduction. 

\begin{Notation}
Order the blocks of a partition according to the order of the largest elements of the blocks. For an ordered index set $\Lambda$, denote $a_\Lambda = \prod_{i \in \Lambda} a_i$. Finally, write $J_i = \set{u_i(1), \ldots, u_i(s(i))}$, so that
\[
\set{1, 2, \ldots, n} = (u_1(1), \ldots, u_1(s(1)), u_2(1), \ldots, u_2(s(2)), \ldots, u_k(1), \ldots, u_k(s(k))).
\]
\end{Notation}

\begin{Prop}
\label{Prop:classical-expansions-1}
Define $\KSP{\mc{P}_{1,2}}{a_1 \otimes a_2 \otimes \ldots \otimes a_n}$ recursively by
\[
\begin{split}
\KSP{\mc{P}_{1,2}}{a_1 \otimes \ldots \otimes a_n \otimes a_{n+1}}
& = \KSP{\mc{P}_{1,2}}{a_1 \otimes \ldots \otimes a_n} X(a_{n+1}) \\
&\quad - \sum_{i=1}^n \KSP{\mc{P}_{1,2}}{a_1 \otimes \ldots \otimes \hat{a}_i \otimes \ldots \otimes a_n} \Exp{a_i a_{n+1}}.
\end{split}
\]
Then
\begin{equation}
\label{Eq:X-P12}
X(a_1) \ldots X(a_n) = \sum_{\pi \in \mc{P}_{1,2}(n)} \prod_{U \in \pi: \abs{U} = 2} \Exp{a_U} \KSP{\mc{P}_{1,2}}{\bigotimes_{V \in \pi: \abs{V} = 1} a_V} ,
\end{equation}
and
\[
\KSP{\mc{P}_{1,2}}{a_1 \otimes \ldots \otimes a_n} = \sum_{\pi \in \mc{P}_{1,2}(n)} (-1)^{\abs{\pi} - \abs{\Sing(\pi)}} \prod_{U \in \pi: \abs{U} = 2} \Exp{a_U} \prod_{V \in \pi: \abs{V} = 1} X(a_V),
\]
and
\begin{multline*}
\prod_{i=1}^k \KSP{\mc{P}_{1,2}}{a_{u_i(1)} \otimes a_{u_i(2)} \otimes \ldots \otimes a_{u_i(s(i))}} \\
= \sum_{\substack{\pi \in \mc{P}_{1,2}(n) \\ \pi \wedge (\hat{1}_{s(1)}, \ldots, \hat{1}_{s(k)}) = \hat{0}_n}} \prod_{U \in \pi: \abs{U} = 2} \Exp{a_U} \KSP{\mc{P}_{1,2}}{\bigotimes_{V \in \pi: \abs{V} = 1} a_V} .
\end{multline*}
\end{Prop}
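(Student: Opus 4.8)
The plan is to derive this product formula as an application of the general poset product formula, Theorem~\ref{Thm:General-product}, to the family $\Pi_i = \mc{P}_{1,2}(i)$ of incomplete matchings. The first step is to match up the objects. For a fixed tuple $(a_1, \ldots, a_n)$ and a partition $\tau$ of an ordered index set into pairs and singletons, I would define
\[
G(\tau) = \prod_{U \in \tau: \abs{U} = 2} \Exp{a_U} \, \KSP{\mc{P}_{1,2}}{\bigotimes_{V \in \tau: \abs{V} = 1} a_V},
\]
with singletons ordered by their elements. Then $G(\hat{0}_{s(i)})$ is exactly the $i$-th factor on the left-hand side, and for $\pi \in \mc{P}_{1,2}(n)$ the quantity $G(\pi)$ is exactly the summand on the right-hand side, so the claim is literally the conclusion of Theorem~\ref{Thm:General-product} once its hypotheses are verified. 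Let $\alpha$ be the natural embedding combining incomplete matchings of the intervals $J_1, \ldots, J_k$ into one on $[n]$; together with the formula $\tau_{s(1), \ldots, s(k)} = \tau \wedge (\hat{1}_{s(1)}, \ldots, \hat{1}_{s(k)})$ from Proposition~\ref{Prop:P12}, this supplies the order-preserving injection and the down-set property required by the theorem.

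The substantive step is to compute $F(\pi) = \sum_{\sigma \geq \pi} G(\sigma)$ and to check the multiplicativity hypothesis \eqref{Eq:Multiplicative}. Fix $\pi \in \mc{P}_{1,2}(n)$ and let $W = \set{j_1 < \cdots < j_m}$ be the set of its singleton points. Since blocks in $\mc{P}_{1,2}$ have size at most two, a coarsening $\sigma \geq \pi$ cannot enlarge the pairs of $\pi$: it retains every pair of $\pi$ and matches-or-leaves-alone the points of $W$, giving a bijection $\sigma \leftrightarrow \rho$ with $\rho \in \mc{P}_{1,2}(W)$. Pulling the scalar pair-weights of $\pi$ out of the sum and applying the monomial expansion \eqref{Eq:X-P12} to the ordered set $W$ yields
\[
F(\pi) = \prod_{U \in \pi: \abs{U} = 2} \Exp{a_U} \cdot X(a_{j_1}) \cdots X(a_{j_m}).
\]

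Multiplicativity then follows from the fact that the intervals are ordered, $J_1 < J_2 < \cdots < J_k$. Writing $W_i \subset J_i$ for the singleton points of $\pi_i$, the partition $\alpha(\pi_1, \ldots, \pi_k)$ has pair set $\bigsqcup_i \set{\text{pairs of } \pi_i}$ and singleton set $W = W_1 \sqcup \cdots \sqcup W_k$. Because the scalars $\Exp{a_U}$ commute out and the increasing-order $X$-product over $W$ factors as the concatenation of the increasing-order products over $W_1, \ldots, W_k$ — precisely because $W_1 < \cdots < W_k$ — one obtains $F(\alpha(\pi_1, \ldots, \pi_k)) = F(\pi_1) \cdots F(\pi_k)$, which is \eqref{Eq:Multiplicative}. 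Theorem~\ref{Thm:General-product} now gives $G(\hat{0}_{s(1)}) \cdots G(\hat{0}_{s(k)}) = \sum_{\tau \wedge (\hat{1}_{s(1)}, \ldots, \hat{1}_{s(k)}) = \hat{0}_n} G(\tau)$, which is the asserted formula. The main point to watch is the ordering bookkeeping: one must use the convention that blocks, and singletons in particular, are ordered by largest element, both so that restricting $G$ to $W$ reproduces \eqref{Eq:X-P12} and so that the factorization of the $X$-product along the ordered intervals is valid. Since the $X(a_j)$ do not commute, this ordering is the only delicate point; everything else is the direct substitution described above.
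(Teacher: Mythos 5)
Your proposal is correct and is essentially the paper's own argument: the paper likewise takes \eqref{Eq:X-P12} as known (citing Effros--Popa), observes that it yields the relation \eqref{Eq:F-G} between the monomial expression $F(\pi)$ and the summand $G(\sigma)$, notes that $F$ satisfies \eqref{Eq:Multiplicative}, and then invokes Theorem~\ref{Thm:General-product} together with Proposition~\ref{Prop:P12}; your verification of \eqref{Eq:F-G} via the coarsening bijection on the singleton set and of multiplicativity via the ordering of the intervals merely fills in details the paper leaves implicit. The one omission is the second claim of the proposition, the expansion of $\KSP{\mc{P}_{1,2}}{a_1 \otimes \ldots \otimes a_n}$ in monomials: it follows from the same relation \eqref{Eq:F-G} by M{\"o}bius inversion at $\hat{0}_n$, using $\mu(\hat{0}_n,\sigma) = (-1)^{\abs{\mi{Pair}(\sigma)}} = (-1)^{\abs{\sigma} - \abs{\Sing(\sigma)}}$ from Proposition~\ref{Prop:P12}, so you should add that one sentence.
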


\begin{proof}
Equation~\eqref{Eq:X-P12} is well known, see for example Theorem~2.1 in \cite{Effros-Popa} for $q=1$. It implies that for $\pi \in \mc{P}_{1,2}(n)$,
\[
\prod_{U \in \pi: \abs{U} = 2} \Exp{a_U} \prod_{V \in \pi: \abs{V} = 1} X(a_V) = \sum_{\substack{\sigma \in \mc{P}_{1,2}(n) \\ \sigma \geq \pi}} \prod_{U \in \sigma: \abs{U} = 2} \Exp{a_U} \KSP{\mc{P}_{1,2}}{\bigotimes_{V \in \sigma: \abs{V} = 1} a_V}.
\]
Denoting the left-hand-side of this equation by $F(\pi)$ and each term in the sum on the right-hand-side by $G(\sigma)$, we see that these functions satisfy the relation \eqref{Eq:F-G}, and $F$ satisfies the multiplicative property \eqref{Eq:Multiplicative}. So Theorem~\ref{Thm:General-product} and Proposition~\ref{Prop:P12} imply the results.
\end{proof}

\begin{Prop}
\label{Prop:classical-expansions-2}
Define $\KSP{\mc{IP}}{a_1 \otimes a_2 \otimes \ldots \otimes a_n}$ recursively by
\[
\begin{split}
\KSP{\mc{IP}}{a_1 \otimes \ldots \otimes a_n \otimes a_{n+1}}
& = \KSP{\mc{IP}}{a_1 \otimes \ldots \otimes a_n} X(a_{n+1}) - \KSP{\mc{IP}}{a_1 \otimes \ldots \otimes a_n} \Exp{a_{n+1}} \\
&\quad - \sum_{i=1}^n \KSP{\mc{IP}}{a_1 \otimes \ldots \otimes \hat{a}_i \otimes \ldots \otimes a_{n} \otimes a_i a_{n+1}} \\
&\quad - \sum_{i=1}^n \KSP{\mc{IP}}{a_1 \otimes \ldots \otimes \hat{a}_i \otimes \ldots \otimes a_n} \Exp{a_i a_{n+1}}.
\end{split}
\]
Then
\begin{equation}
\label{Eq:X-IP}
X(a_1) \ldots X(a_n) = \sum_{(\pi, S) \in \mc{IP}(n)} \prod_{U \in \pi \setminus S} \Exp{a_U} \KSP{\mc{IP}}{\bigotimes_{V \in S} a_V}.
\end{equation}
If $\mc{M}$ is commutative, then
\[
\KSP{\mc{IP}}{a_1 \otimes \ldots \otimes a_n} = \sum_{\substack{(\pi, S) \in \mc{IP}(n) \\ U \in \pi \setminus S \Rightarrow \abs{U} = 1}} (-1)^{n - \abs{S}} \prod_{U \in \pi \setminus S} \Exp{a_U} \prod_{V \in S} (\abs{V} - 1)! X(a_V),
\]
and
\begin{multline*}
\prod_{i=1}^k \KSP{\mc{IP}}{a_{u_i(1)} \otimes a_{u_i(2)} \otimes \ldots \otimes a_{u_i(s(i))}}
= \sum_{\substack{(\pi, S) \in \mc{IP}(n) \\ \pi \wedge (\hat{1}_{s(1)}, \ldots, \hat{1}_{s(k)}) = \hat{0}_n \\ \Sing(\pi) \subset S}} \prod_{U \in \pi \setminus S} \Exp{a_U} \KSP{\mc{IP}}{\bigotimes_{V \in S} a_V}.
\end{multline*}
\end{Prop}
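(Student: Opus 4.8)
The plan is to follow the template of the proof of Proposition~\ref{Prop:classical-expansions-1}: first establish the monomial expansion \eqref{Eq:X-IP} from the recursion, then read off from it an instance of the relation \eqref{Eq:F-G}, and finally feed this into Möbius inversion (Proposition~\ref{Prop:IP}) for the inversion formula and into the general product formula (Theorem~\ref{Thm:General-product}) for the last display.

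First I would prove \eqref{Eq:X-IP} by induction on $n$, the cases $n=0,1$ being immediate from the recursion. Rewriting the defining recursion as
\begin{multline*}
\KSP{\mc{IP}}{b_1\otimes\ldots\otimes b_m}X(b_{m+1}) = \KSP{\mc{IP}}{b_1\otimes\ldots\otimes b_{m+1}} + \KSP{\mc{IP}}{b_1\otimes\ldots\otimes b_m}\Exp{b_{m+1}} \\
+ \sum_i\KSP{\mc{IP}}{b_1\otimes\ldots\otimes\hat{b}_i\otimes\ldots\otimes b_i b_{m+1}} + \sum_i\KSP{\mc{IP}}{b_1\otimes\ldots\otimes\hat{b}_i\otimes\ldots\otimes b_m}\Exp{b_i b_{m+1}},
\end{multline*}
I would apply it with the letters $b_V = a_V$ ranging over the open blocks $V\in S$ of a fixed $(\pi,S)\in\mc{IP}(n)$ and with $b_{m+1}=a_{n+1}$, multiply by the fixed scalar $\prod_{U\in\pi\setminus S}\Exp{a_U}$, and sum over $\mc{IP}(n)$. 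The four terms record the four ways the point $n+1$ can be attached to $(\pi,S)$ to produce an element of $\mc{IP}(n+1)$: as a new open singleton, as a new closed singleton, by joining an open block $V$ and staying open (label $a_{V\cup\{n+1\}}$), or by joining an open block $V$ and being closed off (contributing $\Exp{a_{V\cup\{n+1\}}}$). The bookkeeping step, which I expect to be the main obstacle, is to check that these operations form a bijection between $\mc{IP}(n+1)$ and the disjoint union of the four choices over $\mc{IP}(n)$; equivalently, deleting $n+1$ from any $(\sigma,T)\in\mc{IP}(n+1)$ and recording the open/closed status and the size of its block recovers a unique $(\pi,S)$ together with one attachment type. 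This is precisely where the shape of the recursion is used, in particular the coexistence of a ``join-and-keep-open'' term and a ``join-and-close'' term.

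Next, assuming $\mc{M}$ commutative, I would specialize \eqref{Eq:X-IP} to obtain \eqref{Eq:F-G}. Setting
\[
G((\sigma,T)) = \prod_{U\in\sigma\setminus T}\Exp{a_U}\,\KSP{\mc{IP}}{\bigotimes_{W\in T}a_W}, \qquad F((\pi,S)) = \prod_{U\in\pi\setminus S}\Exp{a_U}\,\prod_{V\in S}X(a_V),
\]
I would apply \eqref{Eq:X-IP} to the product $\prod_{V\in S}X(a_V)$, treating each open block $V$ as one letter with composite label $a_V$. The incomplete partitions of the index set $S$ produced this way correspond exactly to the elements $(\sigma,T)\geq(\pi,S)$ of $\mc{IP}(n)$ (closed blocks of $\pi$ persist as closed blocks, open blocks merge and possibly close according to the order), yielding $F((\pi,S)) = \sum_{(\sigma,T)\geq(\pi,S)}G((\sigma,T))$. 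This is the single step where commutativity is essential: when open blocks are merged their composite label must be the order-independent product $a_{\bigcup}$ so that it agrees with the increasing-order convention in $G$ and the moments $\Exp{a_W}$ are unambiguous. Without commutativity the labels would depend on the concatenation order and the clean identity would fail; this also explains why both the inversion and product formulas carry the commutativity hypothesis while \eqref{Eq:X-IP} does not.

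With \eqref{Eq:F-G} in hand the two displays follow from the established machinery. For the inversion formula, Möbius inversion gives $G((\hat{0}_n,\hat{0}_n)) = \sum_{(\sigma,T)\geq(\hat{0}_n,\hat{0}_n)}\mu((\hat{0}_n,\hat{0}_n),(\sigma,T))F((\sigma,T))$; since $G((\hat{0}_n,\hat{0}_n)) = \KSP{\mc{IP}}{a_1\otimes\ldots\otimes a_n}$ and Proposition~\ref{Prop:IP} makes the Möbius function vanish unless every closed block is a singleton, only the terms with $U\in\pi\setminus S\Rightarrow\abs{U}=1$ survive, with coefficient $(-1)^{n-\abs{S}}\prod_{V\in S}(\abs{V}-1)!$, which is the claimed expansion. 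For the product formula I would verify the multiplicativity hypothesis \eqref{Eq:Multiplicative}: since the intervals $J_1<\ldots<J_k$ are ordered, every block of the $i$-th factor precedes those of the $(i+1)$-st in the block order, so $F(\alpha(\ldots))$ factors as $F((\pi_1,S_1))\ldots F((\pi_k,S_k))$. Then Theorem~\ref{Thm:General-product} applies with the $\tau_{s(1),\ldots,s(k)}$ of Proposition~\ref{Prop:IP}, and the stated index set ($\pi\wedge(\hat{1}_{s(1)},\ldots,\hat{1}_{s(k)})=\hat{0}_n$ together with $\Sing(\pi)\subset S$) is exactly the condition $\tau_{s(1),\ldots,s(k)}=(\hat{0}_n,\hat{0}_n)$ from that proposition. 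The remaining steps are faithful adaptations of the $\mc{P}_{1,2}$ argument, so the real work lies in the induction for \eqref{Eq:X-IP} and in pinning down the commutativity point.
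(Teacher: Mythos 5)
Your proposal is correct and takes essentially the same route as the paper's proof: obtain the relation $F((\pi,S)) = \sum_{(\sigma,T) \geq (\pi,S)} G((\sigma,T))$ from \eqref{Eq:X-IP} by treating each open block as a single letter (which is where commutativity enters, exactly as you explain), then invoke Theorem~\ref{Thm:General-product} and Proposition~\ref{Prop:IP}. The only difference is that the paper does not prove \eqref{Eq:X-IP} but cites it as known (Proposition~2.7(a) of \cite{AnsAppell}), whereas you supply the inductive proof via the four attachment types for the point $n+1$; that induction is sound.
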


\begin{proof}
Equation~\eqref{Eq:X-IP} is known, see for example Proposition~2.7(a) in \cite{AnsAppell}. For commutative $\mc{M}$, it implies that for $(\pi, S) \in \mc{IP}(n)$,
\[
\prod_{U \in \pi \setminus S} \Exp{a_U} \prod_{V \in S} X(a_V) = \sum_{\substack{(\sigma, T) \in \mc{IP}(n) \\ (\sigma, T) \geq (\pi, S)}} \prod_{U \in \sigma \setminus T} \Exp{a_U} \KSP{\mc{IP}}{\bigotimes_{V \in T} a_V}.
\]
So Theorem~\ref{Thm:General-product} and Proposition~\ref{Prop:IP} imply the results.
\end{proof}

\begin{Thm} [Cf. Section~4 in \cite{Sniady-SWN}]
\label{Thm:5th-expansion}
Define $\KSP{\mc{IPRM}}{a_1 \otimes a_2 \otimes \ldots \otimes a_n}$ recursively by
\[
\begin{split}
& \KSP{\mc{IPRM}}{a_1 \otimes \ldots \otimes a_n \otimes a_{n+1}} \\
&\quad = \KSP{\mc{IPRM}}{a_1 \otimes \ldots \otimes a_n} X(a_{n+1}) - \KSP{\mc{IPRM}}{a_1 \otimes \ldots \otimes a_n} \Exp{a_{n+1}} \\
&\quad - \sum_{i=1}^n \KSP{\mc{IPRM}}{a_1 \otimes \ldots \otimes \hat{a}_i \otimes \ldots \otimes a_{n} \otimes a_i a_{n+1}} \\
&\quad - \sum_{i=1}^n \KSP{\mc{IPRM}}{a_1 \otimes \ldots \otimes \hat{a}_i \otimes \ldots \otimes a_n \otimes a_{n+1} a_i} \\
&\quad - \sum_{i=1}^n \KSP{\mc{IPRM}}{a_1 \otimes \ldots \otimes \hat{a}_i \otimes \ldots \otimes a_n} \Exp{a_i a_{n+1}} \\
&\quad - \sum_{1 \leq i < j \leq n} \KSP{\mc{IPRM}}{a_1 \otimes \ldots \otimes \hat{a}_i \otimes \ldots \otimes \hat{a}_j \otimes \ldots \otimes a_{n} \otimes a_i a_{n+1} a_j} \\
&\quad - \sum_{1 \leq i < j \leq n} \KSP{\mc{IPRM}}{a_1 \otimes \ldots \otimes \hat{a}_i \otimes \ldots \otimes \hat{a}_j \otimes \ldots \otimes a_{n} \otimes a_j a_{n+1} a_i}.
\end{split}
\]
For $(\Lambda, f) \in \mc{IPRM}(n)$, let $(\pi, S) \in \mc{INC}(n)$ be the corresponding orbit decomposition. Order each block $\set{w_1, w_2, \ldots, w_\ell}$ of $\pi$ so that $w_{i+1} = f(w_i)$ and, in case the block is closed, so that $w_\ell$ is the numerically largest element in the block. Denote
\[
\KSP{\mc{IPRM}}{a_1 \otimes \ldots \otimes a_n}^{(\Lambda, f)}
= \prod_{U \in \pi \setminus S} \Exp{a_U} \KSP{\mc{IPRM}}{\bigotimes_{V \in S} a_V}
\]
and
\[
M(a_1 \otimes \ldots \otimes a_n)^{(\Lambda, f)} = \prod_{U \in \pi \setminus S} \Exp{a_U} \prod_{V \in S} X(a_V),
\]
where on each block of $\pi$ we use the order described above. Then
\begin{equation}
\label{Eq:X-IPRM}
X(a_1) \ldots X(a_n) = \sum_{(\Lambda, f) \in \mc{IPRM}(n)} \KSP{\mc{IPRM}}{a_1 \otimes \ldots \otimes a_n}^{(\Lambda, f)}.
\end{equation}
Also,
\[
\KSP{\mc{IPRM}}{a_1 \otimes \ldots \otimes a_n} = \sum_{(\Lambda, f) \in \mc{IPRM}(n)} (-1)^{n - \abs{S}} M(a_1 \otimes \ldots \otimes a_n)^{(\Lambda, f)},
\]
and
\begin{multline*}
\prod_{i=1}^k \KSP{\mc{IPRM}}{a_{u_i(1)} \otimes a_{u_i(2)} \otimes \ldots \otimes a_{u_i(s(i))}} \\
= \sum_{(\Lambda, f) \in \mc{ID}(s(1), \ldots, s(k))} \KSP{\mc{IPRM}}{a_1 \otimes \ldots \otimes a_n}^{(\Lambda, f)}.
\end{multline*}
\end{Thm}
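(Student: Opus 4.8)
The strategy is to reduce everything to the monomial expansion~\eqref{Eq:X-IPRM}, after which the inversion and product formulas become formal consequences of the poset machinery, exactly as in Propositions~\ref{Prop:classical-expansions-1} and~\ref{Prop:classical-expansions-2}. Granting \eqref{Eq:X-IPRM}, I would first refine it: applying it to the product $\prod_{V \in S} X(a_V)$, with the open blocks of a fixed $(\pi, S)$ playing the role of the elementary slots, yields
\[
\prod_{U \in \pi \setminus S} \Exp{a_U} \prod_{V \in S} X(a_V) = \sum_{(\sigma, T) \geq (\pi, S)} \prod_{U \in \sigma \setminus T} \Exp{a_U} \KSP{\mc{IPRM}}{\bigotimes_{V \in T} a_V},
\]
which is precisely the relation~\eqref{Eq:F-G} for $F(\pi, S)$ the left-hand side and $G(\sigma, T)$ the summand on the right. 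Möbius inversion against the function $\mu((\hat 0, \hat 0), (\pi, S)) = (-1)^{n - \abs{S}}$ computed in Proposition~\ref{Prop:IPRM} then gives the inversion formula, since $G(\hat 0_n, \hat 0_n) = \KSP{\mc{IPRM}}{a_1 \otimes \ldots \otimes a_n}$ and $F(\sigma, T) = M(a_1 \otimes \ldots \otimes a_n)^{(\Lambda, f)}$ under the orbit correspondence.

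So the substantive step is~\eqref{Eq:X-IPRM} itself, which I would prove by induction on $n$ using the seven-term recursion. Multiplying the degree-$n$ expansion on the right by $X(a_{n+1})$ and reading the recursion as the rule expressing $\KSP{\mc{IPRM}}{\cdots} X(a_{n+1})$ in terms of Wick products on $n+1$ slots plus corrections, I would classify the terms by the role that the new largest point $n+1$ plays in the resulting incomplete permutation $(\Lambda', f') \in \mc{IPRM}(n+1)$. The leading term adjoins $n+1$ as a fresh open singleton; the $\Exp{a_{n+1}}$ term makes it a closed singleton; the terms $b_j a_{n+1}$ and $a_{n+1} b_j$ append $n+1$ to the tail or head of an existing open path, its only successor-free and predecessor-free positions; the $\Exp{b_j a_{n+1}}$ term closes an open path into a cycle, with $n+1$ placed last as dictated by the ``largest element last'' ordering convention for closed blocks; and the terms $b_i a_{n+1} b_j$ and $b_j a_{n+1} b_i$ splice two open paths into one with $n+1$ as the interior junction, the two orderings accounting for the two ways of merging each unordered pair. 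I would check that these alternatives are mutually exclusive and exhaustive and that each produces exactly the correct ordered word, so that the sum over all $(\Lambda, f) \in \mc{IPRM}(n)$ and all operations reproduces $\sum_{(\Lambda', f') \in \mc{IPRM}(n+1)} \KSP{\mc{IPRM}}{\cdots}^{(\Lambda', f')}$ with no repetition.

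Finally, for the product formula I would verify the multiplicativity hypothesis~\eqref{Eq:Multiplicative}: with $F(\pi, S) = \prod_{U \in \pi \setminus S} \Exp{a_U} \prod_{V \in S} X(a_V)$ and $\alpha$ the map placing independent structures on the consecutive intervals $J_1, \ldots, J_k$, all blocks supported in $J_1$ have smaller largest elements than those in $J_2$, and so on, so the block-ordering convention makes $F(\alpha(\pi_1, \ldots, \pi_k))$ equal to the product $F(\pi_1) \cdots F(\pi_k)$ in the correct order. Theorem~\ref{Thm:General-product}, together with the formula for $(\Lambda, f)_{s(1), \ldots, s(k)}$ and the characterization of $\mc{ID}(s(1), \ldots, s(k))$ from Proposition~\ref{Prop:IPRM}, then yields the product formula. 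The main obstacle is the induction in the second paragraph: the noncommutativity forces careful tracking of the orders $a_i a_{n+1}$ versus $a_{n+1} a_i$ and $a_i a_{n+1} a_j$ versus $a_j a_{n+1} a_i$, and one must confirm that the cyclic-order convention on closed blocks is compatible with $n+1$ always being largest, so that the closing term is counted exactly once.
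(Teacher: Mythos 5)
Your proposal is correct and follows essentially the same route as the paper's own proof: equation~\eqref{Eq:X-IPRM} is established by induction on $n$, applying the seven-term recursion with the open blocks as slots and classifying the resulting terms by the role of $n+1$ (your ``two ways of merging each unordered pair'' is exactly the paper's split according to whether the largest letter preceding $n+1$ is smaller or larger than the largest letter following it), and the inversion and product formulas then drop out of Theorem~\ref{Thm:General-product} and Proposition~\ref{Prop:IPRM} just as you describe.
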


\begin{proof}
We prove equation~\eqref{Eq:X-IPRM} by induction. $X(a) = \KSP{\mc{IPRM}}{a} + \Exp{a}$. Denoting
\[
S = \set{V_1 < V_2 < \ldots < V_{\abs{S}}}
\]
and using the inductive hypothesis,
\begin{align*}
& X(a_1) \ldots X(a_n) X(a_{n+1})
= \sum_{(\Lambda, f) \in \mc{IPRM}(n)} \KSP{\mc{IPRM}}{a_1 \otimes \ldots \otimes a_n}^{(\Lambda, f)} X(a_{n+1}) \\
& = \sum_{(\Lambda, f) \in \mc{IPRM}(n)} \prod_{U \in \pi \setminus S} \Exp{a_U} \\
&\qquad \Biggl(\KS{a_{V_1} \otimes a_{V_2} \otimes \ldots \otimes a_{V_{\abs{S}}} \otimes a_{n+1}} \\
&\qquad\quad + \KS{a_{V_1}\otimes a_{V_2}\otimes \ldots a_{V_{\abs{S}}}} \Exp{a_{n+1}}\\
&\qquad\quad + \chf{\abs{S} \geq 1} \sum_{i=1}^{\abs{S}} \KS{a_{V_1} \otimes a_{V_2} \otimes \ldots \otimes \hat{a}_{V_i}\otimes \ldots\otimes a_{V_{\abs{S}}}\otimes a_{V_i} a_{n+1}} \\
&\qquad\quad + \chf{\abs{S} \geq 1} \sum_{i=1}^{\abs{S}} \KS{a_{V_1}\otimes a_{V_2} \otimes \ldots \otimes \hat{a}_{V_i} \otimes \ldots \otimes a_{V_{\abs{S}}} \otimes a_{n+1} a_{V_i}}\\
&\qquad\quad + \chf{\abs{S} \geq 1} \sum_{i=1}^{\abs{S}} \KS{a_{V_1} \otimes a_{V_2} \otimes \ldots \otimes \hat{a}_{V_i} \otimes \ldots \otimes a_{V_{{\abs{S}}}}} \Exp{a_{V_i} a_{n+1}} \\
&\qquad\quad + \chf{\abs{S} \geq 2} \sum_{1 \leq i < j \leq \abs{S}} \KS{a_{V_1} \otimes \ldots \otimes \hat{a}_{V_i} \otimes \ldots \otimes \hat{a}_{V_j} \otimes \ldots \otimes a_{V_{\abs{S}}} \otimes a_{V_i} a_{n+1} a_{V_j}} \\
&\qquad\quad + \chf{\abs{S} \geq 2} \sum_{1 \leq i < j \leq \abs{S}} \KS{a_{V_1} \otimes \ldots \otimes \hat{a}_{V_i} \otimes \ldots \otimes \hat{a}_{V_j} \otimes \ldots \otimes a_{V_{\abs{S}}} \otimes a_{V_j} a_{n+1} a_{V_i}} \Biggr).
\end{align*}
The first term produces all the partitions in $\mc{IPRM}(n+1)$ in which $n+1$ is an open singleton. The second term produces all the partitions in which $n+1$ is a closed singleton. The third term produces all the partitions in which $n+1$ is a final letter in an open word of length at least $2$. The fourth term produces all the partitions in which $n+1$ is the initial letter in an open word of length at least $2$. The fifth term produces all the partitions in which $n+1$ is contained in a closed word of length at least $2$. The sixth term produces all the partitions in which $n+1$ is contained in an open word of length at least $3$, is neither the initial nor the final letter in it, and the largest letter preceding it is smaller than the largest letter following it. The seventh term produces all the partitions in which $n+1$ is contained in an open word of length at least $3$, is neither the initial nor the final letter in it, and the largest letter preceding it is larger than the largest letter following it. These seven classes are disjoint and exhaust $\mc{IPRM}(n+1)$.

It follows that for $(\Lambda, f) \in \mc{IPRM}(n)$,
\[
M(a_1 \otimes \ldots \otimes a_n)^{(\Lambda, f)} =
\sum_{\substack{(\Omega, g) \in \mc{IPRM}(n) \\ (\Omega, g) \geq (\Lambda, f)}} \KSP{\mc{IPRM}}{a_1 \otimes \ldots \otimes a_n}^{(\Omega, g)}.
\]
So Theorem~\ref{Thm:General-product} and Proposition~\ref{Prop:IPRM} imply the results.
\end{proof}

\begin{Remark}
Assume additionally that $\Exp{\cdot}$ is a trace. By applying the functional $\phi_{\mc{IPRM}}$ to \eqref{Eq:X-IPRM}, we obtain the moment formula for $\set{X(a_i)}$, which implies that the cumulants of $\phi_{\mc{IPRM}}$ are
\[
K^{\phi_{\mc{IPRM}}}[X(a_1), \ldots, X(a_n)] = \frac{1}{n} \sum_{\alpha \in \Sym(n)} \Exp{a_{\alpha(1)} \ldots a_{\alpha(n)}}.
\]
\end{Remark}

\begin{Remark}

Note that in Propositions~\ref{Prop:classical-expansions-1} and \ref{Prop:classical-expansions-2}, we do not assume that $W$ is symmetric in its arguments, and in Theorem~\ref{Thm:5th-expansion}, we do not assume that $\mc{M}$ is commutative. The results in Proposition~\ref{Prop:classical-expansions-1} are known by direct methods, see Theorems~3.1 and 3.3 in \cite{Effros-Popa}. The results in Proposition~\ref{Prop:classical-expansions-2} are stated in Proposition~2.7 of \cite{AnsAppell}.

If $\mc{M}$ is commutative, $\KSP{\mc{IPRM}}{a_1 \otimes \ldots \otimes a_n}$ depends only on the underlying incomplete partition, so we may re-write the expansions in Theorem~\ref{Thm:5th-expansion} as
\begin{equation*}
X(a_1) \ldots X(a_n) = \sum_{(\pi, S) \in \mc{IP}(n)} \prod_{U \in \pi \setminus S} (\abs{U} - 1)! \Exp{a_U} \prod_{V \in S} (\abs{V})! \KSP{\mc{IPRM}}{\bigotimes_{V \in S} a_V},
\end{equation*}
and
\[
\KSP{\mc{IPRM}}{a_1 \otimes \ldots \otimes a_n} = \sum_{(\pi, S) \in \mc{IP}(n)} (-1)^{n - \abs{S}} \prod_{U \in \pi \setminus S} (\abs{U} - 1)! \Exp{a_U} \prod_{V \in S} (\abs{V})! X(a_V),
\]
Note that this additional assumption does not imply that $\Gamma(\mc{M})$ is commutative; it is however natural to assume such commutativity to have a non-degenerate representation, see Section~\ref{Sec:Completions}.
\end{Remark}

\begin{Remark}
Let $(\Lambda, f) \in \mc{IPRM}(n)$. For $w \in [n]$, we say that it is
\begin{itemize}
\item
A valley if $w \not \in \Lambda \cup f(\Lambda)$, or $w \in \Lambda \setminus f(\Lambda)$ and $w < f(w)$, or $w \in f(\Lambda) \setminus \Lambda$ and $f^{-1}(w) > w$, or $w \in \Lambda \cup f(\Lambda)$ and $f^{-1}(w) > w < f(w)$.
\item
A closed singleton if $f(w) = w$.
\item
A double rise if $f^{-1}(w) > w$ and either $w > f(w)$ or $w \not \in \Lambda$.
\item
A double fall if $w < f(w)$ and either $f^{-1}(w) < w$ or $w \not \in f(\Lambda)$.
\item
A cycle max if $w_i$ is the (numerically) largest element in a closed word of length at least $2$.
\item
A peak if $f^{-1}(w) < w > f(w)$ and it is not a cycle max.
\end{itemize}
Clearly each letter in $[n]$ belongs to one of these six types. Then a slight extension of the argument in the previous proposition shows that if we define
\[
\begin{split}
& \KS{a_1 \otimes \ldots \otimes a_n \otimes a_{n+1}}
= \KS{a_1 \otimes \ldots \otimes a_n} X(a_{n+1}) - \alpha \KS{a_1 \otimes \ldots \otimes a_n} \Exp{a_{n+1}} \\
&\qquad - \sum_{i=1}^n \beta_1 \KS{a_1 \otimes \ldots \otimes \hat{a}_i \otimes \ldots \otimes a_{n} \otimes a_i a_{n+1}} - \sum_{i=1}^n \beta_2 \KS{a_1, \ldots, \hat{a}_i, \ldots, a_{n+1} a_i} \\
&\qquad - \sum_{i=1}^n t \KS{a_1 \otimes \ldots \otimes \hat{a}_i \otimes \ldots \otimes a_n} \Exp{a_i a_{n+1}} \\
&\qquad - \sum_{1 \leq i < j \leq n} \gamma \KS{a_1 \otimes \ldots \otimes \hat{a}_i \otimes \ldots \otimes \hat{a}_j \otimes \ldots \otimes a_{n} \otimes a_i a_{n+1} a_j} \\
&\qquad - \sum_{1 \leq i < j \leq n} \gamma \KS{a_1 \otimes \ldots \otimes \hat{a}_i \otimes \ldots \otimes \hat{a}_j \otimes \ldots \otimes a_{n} \otimes a_j a_{n+1} a_i},
\end{split}
\]
then
\[
\begin{split}
X(a_1) \ldots X(a_n)
& = \sum_{(\Lambda, f) \in \mc{IPRM}(n)} \KSP{\mc{IPRM}}{a_1 \otimes \ldots \otimes a_n}^{(\Lambda, f)} \\
&\qquad\qquad \times \alpha^{\#\text{closed singletons}} \beta_1^{\#\text{double rises}} \beta_2^{\#\text{double falls}} t^{\#\text{cycle max}} \gamma^{\#\text{peaks}}.
\end{split}
\]
See \cite{Biane-Heine,Simion-Stanton-Laguerre,Clarke-Steingrimsson-Zeng--Euler-Mahonian,KimZeng} for related results. As in most of these references, there is a natural way of including a $q$ parameter in this expansion, based on the values of the $i, j$ indices from the Wick product recursion. However, our technique for obtaining inversion and product formulas does not apply to this extension, and based on the results in Section~\ref{Sec:Counterexample}, it is unclear what these formulas should be.
\end{Remark}

\begin{Remark}
\label{Remark:free-expansions}
Let $\mc{D}$ be a unital star-subalgebra, $\mc{M}$ be a complex star-algebra which is also a $\mc{D}$-bimodule such that for $d_1, d_2 \in \mc{D}$ and $a \in \mc{M}$,
\begin{equation}
\label{Eq:Left-right-action}
d_1 (a d_2) = (d_1 a) d_2,
\end{equation}
and $\Exp{\cdot}: \mc{M} \rightarrow \mc{D}$ a star-linear $\mc{D}$-bimodule map. (We do not assume that $\mc{D} \subset \mc{M}$ since $\mc{M}$ may not be unital.) Let $\Gamma(\mc{M})$ be the complex unital star-algebra generated by non-commuting symbols $\set{X(a): a \in \mc{M}}$ and $\mc{D}$, subject to the linearity relations
\[
X(d_1 a d_2 + d_3 b d_4) = d_1 X(a) d_2 + d_3 X(b) d_4, \quad d_1, d_2, d_3, d_4 \in \mc{D}.
\]
The star-operation on it is determined by the requirement that all $X(a^\ast) = X(a)^\ast$. Thus
\[
\Gamma(\mc{M}) \simeq \bigoplus_{n=0}^\infty \mc{M}^{\otimes_{\mc{D}} n}.
\]
We denote $M(a_1 \otimes \ldots \otimes a_n) = X(a_1) \ldots X(a_n)$, and note that $M$ may be extended to a $\mc{D}$-bimodule map on $\mc{M}^{\otimes_{\mc{D}} n}$.

Let $\pi \in \mc{NC}(n)$. We will define a bimodule map $\Exp{\cdot}^\pi$ on $\mc{M}^{\otimes_{\mc{D}} n}$ recursively as follows. First,
\[
\Exp{d_0 a_1 d_1 \otimes \ldots \otimes a_n d_n}^{\hat{1}_n} = d_0 \Exp{a_1 d_1 \ldots a_n} d_n.
\]
Next let
\[
\Outer(\pi) = \set{V_1 < V_2 < \ldots < V_\ell}, \quad V_i = \set{v(i,1) < \ldots < v(i, t(i))}.
\]
Denote $I_{ij} = [v(i,j) + 1, \ldots, v(i, j+1) - 1]$ for $1 \leq i \leq \ell$, $1 \leq j \leq t(i) - 1$, and $\pi_{i,j} = \pi|_{I_{ij}}$. Note that an interval may be empty. Then we recursively define
\[
\Exp{d_0 a_1 d_1 \otimes \ldots \otimes a_n d_n}^\pi = d_0 \prod_{i=1}^\ell \Exp{ \prod_{j=1}^{t(i) - 1} \left( a_{v(i,j)} d_{v(i,j)} \Exp{a_v d_v: v \in I_{i,j}}^{\pi_{ij}} \right) a_n} d_n.
\]
Note that this is the not the same definition as that in \cite{SpeHab} or Section~3 in \cite{Ans-Bel-Fev-Nica}, although it is related to them and may be expressed in terms of them as long as $\pi$ is appropriately transformed.

Next, let $F$ be a $\mc{D}$-bimodule map on $\mc{M}^{\otimes_{\mc{D}} n}$ (in our examples, either $M$ or $W$). Let $(\pi, S) \in \mc{INC}(n)$, and this time denote
\[
S = \set{V_1 < V_2 < \ldots < V_\ell}, \quad V_i = \set{v(i,1) < \ldots < v(i, t(i))}.
\]
Let $I_{ij}$, $\pi_{ij}$ be as before, and define additionally $v(\ell + 1, 1) = n+1$,
\[
I_{i, t(i)} = [v(i, t(i)) + 1, \ldots, v(i+1, 1) - 1], \quad I_0 = [1, \ldots, v(1,1) - 1],
\]
and the corresponding $\pi_{ij}$, $\pi_0$. Denote
\[
F(d_0 a_1 d_1 \otimes \ldots \otimes a_n d_n)^{(\pi, S)} = d_0 F \left( \Exp{a_v d_v: v \in I_0}^{\pi_0} \bigotimes_{i=1}^\ell \prod_{j=1}^{t(i)} \left( a_{v(i,j)} d_{v(i,j)} \Exp{a_v d_v: v \in I_{i,j}}^{\pi_{ij}} \right) \right).
\]
\end{Remark}

In the scalar-valued case, the following results are known, see Theorem~3.3 in \cite{Effros-Popa} and Proposition~29 in \cite{AnsQLevy} for $q=0$.

\begin{Prop}
\label{Prop:free-expansions-1}
Define $\KSP{\mc{INC}_{1,2}}{a_1 \otimes a_2 \otimes \ldots \otimes a_n}$ recursively by
\[
\begin{split}
\KSP{\mc{INC}_{1,2}}{a_1 \otimes \ldots \otimes a_n \otimes a_{n+1}}
& = \KSP{\mc{INC}_{1,2}}{a_1 \otimes \ldots \otimes a_n} X(a_{n+1}) \\
&\quad - \KSP{\mc{INC}_{1,2}}{a_1 \otimes \ldots \otimes a_{n-1}} \Exp{a_n a_{n+1}}.
\end{split}
\]
Note that $W_{\mc{INC}_{1,2}}$ extends to a $\mc{D}$-bimodule map on each $\mc{M}^{\otimes_{\mc{D}} n}$. Then
\begin{equation}
\label{Eq:X-NC12}
M(a_1 \otimes \ldots \otimes a_n) = \sum_{\pi \in \mc{INC}_{1,2}(n)} \KSP{\mc{INC}_{1,2}}{a_1 \otimes \ldots \otimes a_n}^{(\pi, \Sing(\pi))},
\end{equation}
and
\[
\KSP{\mc{INC}_{1,2}}{a_1 \otimes \ldots \otimes a_n} = \sum_{\pi \in \Int_{1,2}(n)} (-1)^{\abs{\pi} - \abs{\Sing(\pi)}} M(a_1 \otimes \ldots \otimes a_n)^{(\pi, \Sing(\pi))},
\]
and
\begin{multline*}
\prod_{i=1}^k \KSP{\mc{INC}_{1,2}}{a_{u_i(1)} \otimes a_{u_i(2)} \otimes \ldots \otimes a_{u_i(s(i))}} \\
= \sum_{\substack{\pi \in \mc{INC}_{1,2}(n) \\ \pi \wedge (\hat{1}_{s(1)}, \ldots, \hat{1}_{s(k)}) = \hat{0}_n}} \KSP{\mc{INC}_{1,2}}{a_1 \otimes \ldots \otimes a_n}^{(\pi, \Sing(\pi))}.
\end{multline*}
\end{Prop}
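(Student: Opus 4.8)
The plan is to follow the template of Propositions~\ref{Prop:classical-expansions-1} and \ref{Prop:classical-expansions-2}: take the monomial expansion \eqref{Eq:X-NC12} as the known input, reorganize it into the shape $F(\pi) = \sum_{\sigma \geq \pi} G(\sigma)$ demanded by Theorem~\ref{Thm:General-product}, verify the multiplicativity \eqref{Eq:Multiplicative}, and then read off the inversion and product formulas from the M\"obius data of Proposition~\ref{Prop:NC12}. First I would record that \eqref{Eq:X-NC12} is known in the scalar case (Theorem~3.3 in \cite{Effros-Popa} and Proposition~29 in \cite{AnsQLevy} for $q=0$) and that the recursion defining $\KSP{\mc{INC}_{1,2}}{\cdot}$ extends verbatim to a $\mc{D}$-bimodule map, so that the superscript constructions $F(\cdot)^{(\pi, \Sing(\pi))}$ of Remark~\ref{Remark:free-expansions} are well defined for both $F = M$ and $F = W_{\mc{INC}_{1,2}}$.

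Next I would set $G(\sigma) = \KSP{\mc{INC}_{1,2}}{a_1 \otimes \ldots \otimes a_n}^{(\sigma, \Sing(\sigma))}$ and $F(\pi) = M(a_1 \otimes \ldots \otimes a_n)^{(\pi, \Sing(\pi))}$ for $\pi \in \mc{INC}_{1,2}(n)$ (and analogously on each $\Pi_{s(i)}$). The key relation $F(\pi) = \sum_{\sigma \geq \pi} G(\sigma)$ I would obtain by applying \eqref{Eq:X-NC12}, inside the nested bimodule expectation prescribed by the closed (pair) blocks of $\pi$, to the product of the $X(a_V)$ over the open singletons $V \in \Sing(\pi)$. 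Expanding each such run of monomials by the non-crossing matching expansion produces precisely those $\sigma \in \mc{INC}_{1,2}(n)$ refining $\pi$, namely the ones obtained by forming new non-crossing pairs among the open letters while keeping the leftover singletons outer; and the bimodule bookkeeping of $\Exp{\cdot}^{\sigma}$ matches the composite of the $\pi$-expectation with these further contractions.

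The multiplicativity \eqref{Eq:Multiplicative} is then essentially structural: since $M(a_1 \otimes \ldots \otimes a_n) = X(a_1) \cdots X(a_n)$ is literally multiplicative and $\alpha$ places the factor partitions $\pi_i$ side by side on the disjoint intervals $J_i$, the blocks of $\alpha(\pi_1, \ldots, \pi_k)$ are exactly the blocks of the individual $\pi_i$, none of which nest across intervals, so the nested expectation defining $F(\alpha(\pi_1, \ldots, \pi_k))$ factors as $\prod_{i} F(\pi_i)$. With these two properties in hand, Theorem~\ref{Thm:General-product} applied via the description $\tau_{s(1), \ldots, s(k)} = \tau \wedge (\hat{1}_{s(1)}, \ldots, \hat{1}_{s(k)})$ from Proposition~\ref{Prop:NC12} yields the product formula. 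For the inversion formula I would invert $F(\hat{0}_n) = \sum_{\sigma \geq \hat{0}_n} G(\sigma)$ using the M\"obius function of Proposition~\ref{Prop:NC12}: the sum collapses to interval matchings $\Int_{1,2}(n)$ because $\mu(\hat{0}_n, \sigma)$ vanishes unless every pair of $\sigma$ is outer, i.e. unless $\sigma \in \Int_{1,2}(n)$, in which case it equals $(-1)^{\abs{\mi{Pair}(\sigma)}} = (-1)^{\abs{\sigma} - \abs{\Sing(\sigma)}}$.

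The main obstacle I anticipate is the key relation $F(\pi) = \sum_{\sigma \geq \pi} G(\sigma)$ in the operator-valued non-crossing setting. One must check that expanding the open letters respects both the non-crossing constraint and the requirement that the surviving singletons stay outer, and, more delicately, that the composite bimodule expectations obtained by first applying $\Exp{\cdot}^{\pi}$ on the closed pairs of $\pi$ and then contracting the open letters coincide exactly with the single maps $\Exp{\cdot}^{\sigma}$ attached to each refinement $\sigma$. This amounts to verifying that the interval nesting induced by the pairs of $\pi$ is compatible with the nesting induced by the pairs of $\sigma$, a bookkeeping check that is transparent in the scalar case but requires care once the $\mc{D}$-bimodule structure and the recursive definition of $\Exp{\cdot}^{\pi}$ are present.
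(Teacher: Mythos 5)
Your downstream machinery is exactly the paper's: once $F(\pi) = M(a_1 \otimes \ldots \otimes a_n)^{(\pi,\Sing(\pi))}$ and $G(\sigma) = \KSP{\mc{INC}_{1,2}}{a_1 \otimes \ldots \otimes a_n}^{(\sigma,\Sing(\sigma))}$ are known to satisfy $F(\pi) = \sum_{\sigma \geq \pi} G(\sigma)$ and the multiplicativity \eqref{Eq:Multiplicative}, the product formula follows from Theorem~\ref{Thm:General-product} with $\tau_{s(1),\ldots,s(k)} = \tau \wedge (\hat{1}_{s(1)},\ldots,\hat{1}_{s(k)})$, and the inversion formula follows by M{\"o}bius inversion, your identification of the support of $\mu(\hat{0}_n,\cdot)$ with $\Int_{1,2}(n)$ via Proposition~\ref{Prop:NC12} being correct.

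The genuine gap is at the very first step: you take \eqref{Eq:X-NC12} as ``known input,'' but it is known (Theorem~3.3 in \cite{Effros-Popa}, Proposition~29 in \cite{AnsQLevy}) only in the scalar case $\mc{D} = \mf{C}$, whereas the Proposition asserts it for the operator-valued Wick products of Remark~\ref{Remark:free-expansions}; indeed \eqref{Eq:X-NC12} is itself the first claim of the Proposition, and it is precisely the case $\pi = \hat{0}_n$ of the key relation you propose to deduce from it. Observing that $W_{\mc{INC}_{1,2}}$ ``extends verbatim to a $\mc{D}$-bimodule map'' gives well-definedness of both sides, not the identity: the right-hand side involves the nested $\mc{D}$-valued expectations $\Exp{\cdot}^{\pi}$, and there is no formal transfer from the scalar statement. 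What is missing is the induction on $n$ that the paper runs (and only outlines, referring to the parallel arguments in Theorem~\ref{Thm:5th-expansion}, Proposition~\ref{Prop:free-expansions-2}, and Theorem~\ref{Thm:Free-Meixner-Wick}): multiply the expansion of $M(a_1 \otimes \ldots \otimes a_n)$ by $X(a_{n+1})$, rewrite each $\KSP{\mc{INC}_{1,2}}{\cdots}^{(\pi,\Sing(\pi))} X(a_{n+1})$ using the two-term recursion, and check that the two resulting terms produce, exactly once each, the elements of $\mc{INC}_{1,2}(n+1)$ in which $n+1$ is an outer singleton, respectively is paired with the largest singleton of $\pi$ (the trailing closed pairs of $\pi$ becoming nested inside the new pair). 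Your closing paragraph flags the associated bimodule bookkeeping as an anticipated obstacle but does not carry it out; that bookkeeping is exactly what the induction absorbs, so until \eqref{Eq:X-NC12}, or equivalently the key relation, is actually proved in the operator-valued setting, both the inversion and the product formulas remain conditional.
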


The proof is similar to and simpler than that of the next proposition, so we omit it.

\begin{Prop}
\label{Prop:free-expansions-2}
Define $\KSP{\mc{INC}}{a_1 \otimes a_2 \otimes \ldots \otimes a_n}$ recursively by
\[
\begin{split}
& \KSP{\mc{INC}}{a_1 \otimes \ldots \otimes a_n \otimes a_{n+1}} \\
&\quad = \KSP{\mc{INC}}{a_1 \otimes \ldots \otimes a_n} X(a_{n+1}) - \KSP{\mc{INC}}{a_1 \otimes \ldots \otimes a_n} \Exp{a_{n+1}} \\
&\qquad - \KSP{\mc{INC}}{a_1 \otimes \ldots \otimes a_n a_{n+1}} - \KSP{\mc{INC}}{a_1 \otimes \ldots \otimes a_{n-1}} \Exp{a_n a_{n+1}}.
\end{split}
\]
Then
\begin{equation}
\label{Eq:X-INC}
X(a_1) \ldots X(a_n) = \sum_{(\pi, S) \in \mc{INC}(n)} \KSP{\mc{INC}}{a_1 \otimes \ldots \otimes a_n}^{(\pi, S)},
\end{equation}
and
\[
\KSP{\mc{INC}}{a_1 \otimes \ldots \otimes a_n} = \sum_{\substack{(\pi, S) \in \mc{INC}(n) \\ \pi \in \Int(n), U \in \pi \setminus S \Rightarrow \abs{U} = 1}} (-1)^{n - \abs{S}} M(a_1 \otimes \ldots \otimes a_n)^{(\pi, S)},
\]
and
\begin{equation*}
\prod_{i=1}^k \KSP{\mc{INC}}{a_{u_i(1)} \otimes a_{u_i(2)} \otimes \ldots \otimes a_{u_i(s(i))}}
= \sum_{\substack{(\pi, S) \in \mc{INC}(n) \\ \pi \wedge (\hat{1}_{s(1)}, \ldots, \hat{1}_{s(k)}) = \hat{0}_n \\ \Sing(\pi) \subset S}} \KSP{\mc{INC}}{a_1 \otimes \ldots \otimes a_n}^{(\pi, S)}.
\end{equation*}
\end{Prop}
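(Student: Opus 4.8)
The plan is to follow the template of Propositions~\ref{Prop:classical-expansions-1} and \ref{Prop:classical-expansions-2} and of Theorem~\ref{Thm:5th-expansion}: first establish the monomial expansion~\eqref{Eq:X-INC} directly by induction, then recognize it as an instance of the hypothesis~\eqref{Eq:F-G} with a multiplicative $F$, and finally read off the inversion and product formulas from Theorem~\ref{Thm:General-product} together with the M\"obius computation in Proposition~\ref{Prop:INC}.

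First I would prove~\eqref{Eq:X-INC} by induction on $n$, multiplying the expansion of $X(a_1) \ldots X(a_n)$ on the right by $X(a_{n+1})$ and substituting the defining recursion for $\KSP{\mc{INC}}{\cdot}$, exactly as in the proof of Theorem~\ref{Thm:5th-expansion}. In each summand $\KSP{\mc{INC}}{a_1 \otimes \ldots \otimes a_n}^{(\pi, S)} X(a_{n+1})$, the superscript reduction of Remark~\ref{Remark:free-expansions} presents the open part as a tensor $b_1 \otimes \ldots \otimes b_\ell$ whose last slot $b_\ell$ is the rightmost open block $V_\ell$ together with the closed blocks trailing it. The point is that the new maximal index $n+1$ can interact only with $b_\ell$, since the non-crossing constraint forbids it from reaching into earlier open blocks; this is exactly why the recursion carries no summation and touches only the final slot. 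The four terms of the recursion then account, respectively, for: $n+1$ forming a new open singleton; $n+1$ forming a closed singleton (the $\Exp{a_{n+1}}$ term); $n+1$ being adjoined to $b_\ell$ while that block stays open (the $b_\ell a_{n+1}$ gluing term); and $n+1$ being adjoined to $b_\ell$ while that block is closed off (the $\Exp{b_\ell a_{n+1}}$ closing term). One checks that these four classes are disjoint and that, as $(\pi, S)$ ranges over $\mc{INC}(n)$, they produce each element of $\mc{INC}(n+1)$ exactly once, so the expansion propagates.

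The main obstacle is precisely the bookkeeping in this induction, namely verifying that ``adjoin $a_{n+1}$ to $b_\ell$'' and ``close $b_\ell$'' are compatible with the recursive definitions of the superscript $(\cdot)^{(\pi, S)}$ and of $\Exp{\cdot}^\pi$ from Remark~\ref{Remark:free-expansions}. When $V_\ell$ already carries nested contracted intervals, adjoining $n+1$ must fold the trailing closed blocks into the enlarged open block, and closing $V_\ell$ must reproduce exactly the $\Exp{\cdot}^{\pi'}$ factor attached to the resulting closed outer block; in the operator-valued setting this has to be tracked carefully through the nested $I_{ij}$ decomposition and the bimodule relation~\eqref{Eq:Left-right-action}. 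Once~\eqref{Eq:X-INC} is in hand, applying it to the open-block tensor of a fixed $(\pi, S)$ yields $M(a_1 \otimes \ldots \otimes a_n)^{(\pi, S)} = \sum_{(\sigma, T) \geq (\pi, S)} \KSP{\mc{INC}}{a_1 \otimes \ldots \otimes a_n}^{(\sigma, T)}$, which is~\eqref{Eq:F-G} with $F = M(\cdot)$ and $G = \KSP{\mc{INC}}{\cdot}$; here one uses that the up-set $\set{(\sigma, T) \geq (\pi, S)}$ in $\mc{INC}(n)$ corresponds bijectively to grouping the open blocks of $(\pi, S)$ in a non-crossing way and optionally closing some of the merged blocks.

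Finally I would verify the multiplicative hypothesis~\eqref{Eq:Multiplicative}: since the intervals $J_1, \ldots, J_k$ of~\eqref{Eq:Intervals} are consecutive and $\alpha$ keeps every block of each $\pi_i$ inside its own interval, the contractions $\Exp{\cdot}^{\pi_i}$ on distinct intervals never interact, and the monomial $M(\cdot)^{\alpha(\pi_1, \ldots, \pi_k)}$ factors as the concatenated product $M(\cdot)^{\pi_1} \ldots M(\cdot)^{\pi_k}$. With~\eqref{Eq:F-G} and~\eqref{Eq:Multiplicative} verified, the inversion formula is immediate M\"obius inversion using~\eqref{Eq:Mobius-INC}, which restricts the sum to $\pi \in \Int(n)$ with all closed blocks singletons and contributes the sign $(-1)^{n - \abs{S}}$; and the product formula is Theorem~\ref{Thm:General-product} applied with the $\tau_{s(1), \ldots, s(k)}$ of Proposition~\ref{Prop:INC}, whose vanishing condition translates into $\pi \wedge (\hat{1}_{s(1)}, \ldots, \hat{1}_{s(k)}) = \hat{0}_n$ together with $\Sing(\pi) \subset S$.
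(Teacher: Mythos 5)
Your proposal is correct and follows essentially the same route as the paper: prove the monomial expansion~\eqref{Eq:X-INC} by induction, identifying the four terms of the recursion with the four disjoint classes of $(\pi', S') \in \mc{INC}(n+1)$ according to whether $n+1$ is an open singleton, a closed singleton, joins a larger open block, or joins a larger closed block, and then deduce the inversion and product formulas from Theorem~\ref{Thm:General-product} together with the M\"obius computation of Proposition~\ref{Prop:INC}. The only difference is one of detail: the paper merely outlines the induction (referring to Theorems~\ref{Thm:5th-expansion} and \ref{Thm:Free-Meixner-Wick}), whereas you spell out the operator-valued bookkeeping for the rightmost open slot, which is a faithful elaboration rather than a different argument.
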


\begin{proof}
The proof of equation~\eqref{Eq:X-INC} is similar to the argument in Theorem~\ref{Thm:5th-expansion} above or Theorem~\ref{Thm:Free-Meixner-Wick} below, so we only outline it. It is based on the observation that the four terms in the recursion relation for $W_{\mc{INC}}$ correspond to the decomposition of $\mc{INC}(n)$ as a disjoint union of four sets: those where $n+1$ is an open singleton, a closed singleton, those where it belongs to larger open block, and those where it belongs to a larger closed block. Equation~\eqref{Eq:X-INC} implies that for $(\pi, S) \in \mc{INC}(n)$,
\[
M(a_1 \otimes \ldots \otimes a_n)^{(\pi, S)} = \sum_{\substack{(\sigma, T) \in \mc{INC}(n) \\ (\sigma, T) \geq (\pi, S)}} \KSP{\mc{INC}}{a_1 \otimes \ldots \otimes a_n}^{(\pi, S)}.
\]
So Theorem~\ref{Thm:General-product} and Proposition~\ref{Prop:INC} imply the results.
\end{proof}

\section{Expansions for free Meixner Wick products}
\label{Sec:Free-Meixner}

\begin{Notation}
A \emph{covered partition} is a partition $\pi \in \mc{NC}(\Lambda)$ with a single outer block, or equivalently such that $\min (\Lambda) \stackrel{\pi}{\sim} \max (\Lambda)$; their set is denoted by $\mc{NC}'(\Lambda)$. We define an additional order on $\mc{NC}(\Lambda)$: $\pi \ll \sigma$ if $\pi \leq \sigma$ and in addition, for each block $U \in \sigma$, $\pi|_U \in \mc{NC}'(U)$. See \cite{Belinschi-Nica-Eta,Nica-NC-Linked} for more details.

For $(\pi, S) \in \mc{INC}(n)$ and $\sigma \ll \pi$, we say that a block of $\sigma$ is open if it contains the smallest element of an open block of $\pi$; their collection is denoted $S'(\sigma, S)$. In particular, each open singleton block of $\sigma$ consists of the smallest element of some open block of $\pi$.
\end{Notation}

\begin{Defn}
\label{Defn:Free-Meixner-Wick}
For $\mc{D}$, $\mc{M}$, $\Gamma(\mc{M})$, and $X(a)$ as in Remark~\ref{Remark:free-expansions}, define the free Meixner-Kailath-Segall polynomials by the recursion
\begin{equation*}
\label{Meixner-KS}
\begin{split}
\KS{a_1 \otimes \ldots \otimes a_n \otimes a_{n+1}}
& = \KS{a_1 \otimes \ldots \otimes a_n} X(a_{n+1}) - \alpha \KS{a_1 \otimes \ldots \otimes a_n} \Exp{a_{n+1}} \\
& - \beta \KS{a_1 \otimes \ldots \otimes a_{n-1} \otimes a_n a_{n+1}} - t \KS{a_1 \otimes \ldots \otimes a_{n-1}} \Exp{a_n a_{n+1}} \\
& - \gamma \KS{a_1 \otimes \ldots \otimes a_{n-2} \otimes a_{n-1} a_n a_{n+1}}
\end{split}
\end{equation*}
and in particular
\begin{equation*}
\KS{a_1 \otimes a_2} = \KS{a_1} X(a_2) - \alpha \KS{a_1} \Exp{a_2} - \beta \KS{a_1 a_2} - t \Exp{a_1 a_2}
\end{equation*}
and $\KS{a_1} = X(a_1) - \alpha \Exp{a_1}$. Compare with Section~7 in \cite{Sniady-SWN}.
\end{Defn}

\begin{Notation}
For $(\pi, S) \in \mc{INC}(n)$, let
\[
C^{(\pi, S)}_{\alpha, \beta, t, \gamma}
= \sum_{\substack{\sigma \leq \pi, \\ U \in \pi \setminus S \Rightarrow \sigma|_U \in \mc{NC}'(U), \\ \Sing(\sigma) \subset \Sing(\pi) \cup S'(\sigma, S)}}  \alpha^{\abs{\Sing(\pi \setminus S)}} \beta^{n - 2 \abs{\sigma} + \abs{S} + \abs{\Sing(\pi \setminus S)}} t^{\abs{\pi \setminus S} - \abs{\Sing(\pi \setminus S)}} \gamma^{\abs{\sigma} - \abs{\pi}}.
\]

In particular,
\[
\begin{split}
C^{\pi}_{\alpha, \beta, t, \gamma}
= C^{(\pi, \emptyset)}_{\alpha, \beta, t, \gamma}
= \sum_{\substack{\sigma \ll \pi, \\ \Sing(\sigma) = \Sing(\pi)}}  \alpha^{\abs{\Sing(\pi)}} \beta^{n - 2 \abs{\sigma} + \abs{\Sing(\pi)}} t^{\abs{\pi} - \abs{\Sing(\pi)}} \gamma^{\abs{\sigma} - \abs{\pi}}.
\end{split}
\]
\end{Notation}

\begin{Lemma}
\label{Lemma:Kappa}
Denote $M_n(\beta, \gamma)$ a particular case of the Jacobi-Rogers polynomials, the sum over Motzkin paths of length $n$ with flat steps given weight $\beta$ and down steps given weight $\gamma$. Then
\[
C^{(\pi, S)}_{\alpha, \beta, t, \gamma}
= \prod_{U \in \pi \setminus S} \kappa^{\abs{U}}_{\alpha, \beta, t, \gamma} \prod_{V \in S}  \omega^{\abs{V}}_{\alpha, \beta, t, \gamma},
\]
where
\[
\omega^n_{\alpha, \beta, t, \gamma} = M_{n-1}(\beta, \gamma), \quad \kappa^1_{\alpha, \beta, t, \gamma} = \alpha, \quad \kappa^n_{\alpha, \beta, t, \gamma}
= t M_{n-2}(\beta, \gamma).
\]
\end{Lemma}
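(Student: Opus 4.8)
The plan is to prove the formula in two stages: first factor $C^{(\pi, S)}_{\alpha, \beta, t, \gamma}$ as a product over the blocks of $\pi$, reducing it to two ``per-block'' enumerations, and then identify each of those enumerations with a Jacobi--Rogers polynomial by a single lattice-path bijection. First I would note that every $\sigma \leq \pi$ decomposes uniquely as a disjoint union $\sigma = \bigsqcup_{W \in \pi} \sigma|_W$, and conversely any independent choice of non-crossing $\sigma|_W$ on the blocks $W$ reassembles into a non-crossing $\sigma \leq \pi$ (the pieces cannot cross because the blocks $W$ do not). The three conditions defining $C^{(\pi,S)}$ all decouple across $W$: the condition $U \in \pi \setminus S \Rightarrow \sigma|_U \in \mc{NC}'(U)$ is per-block, and $\Sing(\sigma) \subset \Sing(\pi) \cup S'(\sigma, S)$ says precisely that inside a closed block $U$ with $\abs{U} \geq 2$ the partition $\sigma|_U$ has no singletons, while inside an open block $V$ the only admissible singleton of $\sigma|_V$ is the one containing $\min(V)$ (singleton blocks of $\pi$ impose nothing). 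Finally the four exponents are additive over $W$: writing $\abs{\sigma} = \sum_W \abs{\sigma|_W}$, $n = \sum_W \abs{W}$, $\abs{S} = \sum_{V \in S} 1$, and $\abs{\Sing(\pi \setminus S)} = \sum_{U \in \pi \setminus S} [\abs{U} = 1]$, each block $W$ contributes $\beta^{\abs{W} - 2\abs{\sigma|_W}}$ and $\gamma^{\abs{\sigma|_W} - 1}$, with the $\alpha$, the $t$, and a residual power of $\beta$ depending only on whether $W$ is open or closed. Hence $C^{(\pi,S)}_{\alpha,\beta,t,\gamma} = \prod_{U \in \pi \setminus S}(\cdots)\prod_{V \in S}(\cdots)$, and it remains to evaluate the two per-block factors.

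Collecting the per-block weights, for a closed singleton the only choice gives weight $\alpha$, so $\kappa^1 = \alpha$; for a closed block of size $m \geq 2$ the factor $t$ comes out and one is left with $\kappa^m = t \sum \beta^{m - 2\abs{\sigma}} \gamma^{\abs{\sigma} - 1}$, summed over $\sigma \in \mc{NC}'([m])$ with $\Sing(\sigma) = \emptyset$; for an open block of size $m$ one gets $\omega^m = \sum \beta^{m - 2\abs{\sigma} + 1} \gamma^{\abs{\sigma} - 1}$, summed over $\sigma \in \mc{NC}([m])$ in which only the block of $\min$ may be a singleton. So the whole lemma reduces to the two identities that these weighted enumerations equal $M_{m-2}(\beta, \gamma)$ and $M_{m-1}(\beta, \gamma)$ respectively.

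I would prove both identities by the same device. Designate one block as a ``floor'' and read the remaining positions left to right, sending the opener of each non-floor block to an up step, its closer to a down step of weight $\gamma$, and each transient of a non-floor block as well as each floor element to a flat step of weight $\beta$; the height after position $i$ records the number of non-floor blocks currently open. Non-crossingness is exactly the statement that transients and closers attach to the most recently opened block, so this is the standard stack encoding and is a bijection onto Motzkin paths, the inverse rebuilding the blocks from the stack and gathering the height-$0$ flats into the floor block. For a covered, singleton-free $\sigma$ on $[m]$ the outer block is the floor; its opener $1$ and closer $m$ are forced and omitted, so one reads positions $2, \ldots, m-1$ and gets a Motzkin path of length $m-2$ with $\abs{\sigma} - 1$ down steps and $m - 2\abs{\sigma}$ flat steps, giving $\kappa^m / t = M_{m-2}$. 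In the open case the block of $\min$ is the floor; only its opener $1$ is forced, so one reads positions $2, \ldots, m$ and gets a Motzkin path of length $m-1$ with $\abs{\sigma} - 1$ down steps and $m - 2\abs{\sigma} + 1$ flat steps, giving $\omega^m = M_{m-1}$. Matching the exponents of $\beta, \gamma$ with the counts of flat and down steps finishes the proof.

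The main obstacle is the correctness of the bijection: one must verify that the floor convention keeps the height nonnegative and returns it to $0$ (which follows because non-crossingness forces every nested block to close before each floor element, so floor elements are always read at height $0$), that the encoding is genuinely invertible, and --- most delicately --- that the two cases read different ranges of positions ($2, \ldots, m-1$ versus $2, \ldots, m$), which is exactly what produces the shift between $M_{m-2}$ and $M_{m-1}$. As a robust alternative that sidesteps the explicit bijection, each weighted enumeration can instead be shown to satisfy the defining Motzkin recursion $M_\ell = \beta M_{\ell-1} + \gamma \sum_{j=0}^{\ell-2} M_j M_{\ell-2-j}$ by conditioning on the first return of the floor block to height $0$.
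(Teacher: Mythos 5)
Your proof is correct and follows essentially the same route as the paper's: factor $C^{(\pi,S)}_{\alpha,\beta,t,\gamma}$ over the blocks of $\pi$, then identify the per-block sums over non-crossing partitions with Motzkin path generating functions. The only difference is one of detail rather than of method: the paper treats the block factorization and the identity $\sum_{\tau \in \mc{NC}_{\geq 2}'(m)} \beta^{m-2\abs{\tau}}\gamma^{\abs{\tau}-1} = M_{m-2}(\beta,\gamma)$ as standard, and handles the open-block case by the bijection $\set{\tau \in \mc{NC}(m): \Sing(\tau) \subset \set{1}} \simeq \mc{NC}_{\geq 2}'(m+1)$ (adjoin a point to the block of $1$), whereas you prove both identifications directly via the explicit stack encoding --- which, composed with that add-a-point bijection, is exactly your open-case map reading positions $2,\ldots,m$.
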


\begin{proof}
We note that
\[
\kappa^1_{\alpha, \beta, t, \gamma} = \alpha,
\quad \kappa^n_{\alpha, \beta, t, \gamma}
= t \sum_{\tau \in \mc{NC}_{\geq 2}'(n)} \beta^{n - 2 \abs{\tau}} \gamma^{\abs{\tau} - 1}
= t M_{n-2}(\beta, \gamma),
\]
and
\[
\omega^1_{\alpha, \beta, t, \gamma} = 1,
\quad \omega^n_{\alpha, \beta, t, \gamma}
= \sum_{\substack{\tau \in \mc{NC}(n), \\ \Sing(\tau) \subset \set{1}}} \beta^{n - 2 \abs{\tau} + 1} \gamma^{\abs{\tau} - 1}
= \sum_{\tau \in \mc{NC}_{\geq 2}'(n+1)} \beta^{n - 2 \abs{\tau} + 1} \gamma^{\abs{\tau} - 1}
= M_{n-1}(\beta, \gamma). \qedhere
\]
\end{proof}

We formulate and prove the following theorem for the case $\mc{D} = \mf{C}$ to simplify notation, but the result carries over verbatim for general $\mc{D}$.

\begin{Thm}
\label{Thm:Free-Meixner-Wick}
We have expansions of monomials
\begin{equation}
\label{Eq:Monomial}
X(a_1) X(a_2) \ldots X(a_n) = \sum_{(\pi, S) \in \mc{INC}(n)} C^{(\pi, S)}_{\alpha, \beta, t, \gamma} \prod_{U \in \pi \setminus S} \Exp{a_U} \KS{\bigotimes_{V \in S} a_V}.
\end{equation}
\end{Thm}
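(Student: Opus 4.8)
The plan is to prove \eqref{Eq:Monomial} by induction on $n$, in the same spirit as Theorem~\ref{Thm:5th-expansion}, using the recursion of Definition~\ref{Defn:Free-Meixner-Wick} read backwards. Solving that recursion for the product of a Wick polynomial with a single $X$ gives
\[
\begin{split}
\KS{b_1 \otimes \ldots \otimes b_m} X(b_{m+1})
&= \KS{b_1 \otimes \ldots \otimes b_m \otimes b_{m+1}} + \alpha \KS{b_1 \otimes \ldots \otimes b_m} \Exp{b_{m+1}} \\
&\quad + \beta \KS{b_1 \otimes \ldots \otimes b_{m-1} \otimes b_m b_{m+1}} + t \KS{b_1 \otimes \ldots \otimes b_{m-1}} \Exp{b_m b_{m+1}} \\
&\quad + \gamma \KS{b_1 \otimes \ldots \otimes b_{m-2} \otimes b_{m-1} b_m b_{m+1}}.
\end{split}
\]
The cases $n \leq 1$ are immediate: $X(a_1) = \KS{a_1} + \alpha \Exp{a_1}$ accounts for the open singleton, with $\omega^1_{\alpha,\beta,t,\gamma} = 1$, and the closed singleton, with $\kappa^1_{\alpha,\beta,t,\gamma} = \alpha$.

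For the inductive step I would multiply the expansion of $X(a_1) \ldots X(a_n)$ on the right by $X(a_{n+1})$. The scalar factors $C^{(\pi,S)}_{\alpha,\beta,t,\gamma} \prod_{U \in \pi \setminus S} \Exp{a_U}$ commute past $X(a_{n+1})$, so only the last factor $\KS{\bigotimes_{V \in S} a_V}$ is affected; applying the identity above with $(b_1, \ldots, b_m) = (a_{V_1}, \ldots, a_{V_\ell})$ for $S = \set{V_1 < \ldots < V_\ell}$ and $b_{m+1} = a_{n+1}$ produces five families of terms (those requiring $\ell \geq 1$ or $\ell \geq 2$ being present only then, as with the indicators in Theorem~\ref{Thm:5th-expansion}). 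I claim these correspond to the five ways the new point $n+1$ can join an element of $\mc{INC}(n+1)$, with matching weights: (i) $n+1$ is a new open singleton (factor $1$); (ii) $n+1$ is a new closed singleton (factor $\alpha$); (iii) $n+1$ is appended to the top of the rightmost open block $V_\ell$, which stays open (factor $\beta$); (iv) $V_\ell$ is closed off by pairing its top with $n+1$ and replacing it by the expectation $\Exp{a_{V_\ell} a_{n+1}}$ (factor $t$); and (v) the two rightmost open blocks $V_{\ell-1}, V_\ell$ are merged together with $n+1$ into a single open block, with $V_\ell$ becoming nested (factor $\gamma$). Since $n+1$ is the global maximum, its block is automatically the rightmost open block of $\pi'$, so these five classes are disjoint and exhaust $\mc{INC}(n+1)$, and the erase-$n+1$ map is well defined.

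The coefficient check then proceeds block by block using the factorization $C^{(\pi,S)}_{\alpha,\beta,t,\gamma} = \prod_{U \in \pi \setminus S} \kappa^{\abs{U}}_{\alpha,\beta,t,\gamma} \prod_{V \in S} \omega^{\abs{V}}_{\alpha,\beta,t,\gamma}$ of Lemma~\ref{Lemma:Kappa}. Operations (i), (iii), (v) affect only the unique open block $W \ni n+1$, and I would show they realize the Motzkin recursion
\[
M_{\abs{W}-1}(\beta,\gamma) = \beta M_{\abs{W}-2}(\beta,\gamma) + \gamma \sum_{i+j = \abs{W}-3} M_i(\beta,\gamma) M_j(\beta,\gamma)
\]
satisfied by $\omega^{\abs{W}}_{\alpha,\beta,t,\gamma} = M_{\abs{W}-1}(\beta,\gamma)$: (iii) is the flat last step contributing $\beta\, \omega^{\abs{W}-1}$, while (v) is the first-return split of $W$ into $V_{\ell-1}$ of size $p$ and $V_\ell$ of size $q$ with $p+q = \abs{W}-1$, contributing $\gamma\, \omega^{p} \omega^{q} = \gamma M_{p-1} M_{q-1}$ summed over the split. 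Operation (iv) converts the open block $V_\ell$, of weight $\omega^{\abs{V_\ell}} = M_{\abs{V_\ell}-1}$, into a closed block of weight $t\, M_{\abs{V_\ell}-1} = \kappa^{\abs{V_\ell}+1}$, exactly the factor $t$ predicted, while (ii) is the degenerate closing $\kappa^1 = \alpha$. Equivalently, and perhaps more transparently, one may run the induction keeping the refining partition $\sigma \leq \pi$ of the definition of $C^{(\pi,S)}_{\alpha,\beta,t,\gamma}$ explicit; a short exponent computation then shows that each of the five operations alters the monomial $\alpha^{\bullet} \beta^{\bullet} t^{\bullet} \gamma^{\bullet}$ by precisely the factor $1, \alpha, \beta, t, \gamma$, so that the operations give a weight-preserving bijection from admissible triples $(\pi, S, \sigma)$ on $[n]$, each paired with an admissible operation, onto admissible triples on $[n+1]$.

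The main obstacle I anticipate is establishing this bijection rigorously. One must verify that each operation sends an admissible triple to an admissible triple, preserving $S \subset \Outer(\pi)$, the covering condition $\sigma|_U \in \mc{NC}'(U)$ on closed blocks, and the singleton constraint $\Sing(\sigma) \subset \Sing(\pi) \cup S'(\sigma, S)$; and, conversely, that erasing $n+1$ from a given triple falls into exactly one of the five cases. The delicate point is distinguishing operation (iii) from operation (v)—recognizing from the local structure of $\sigma$ at $n+1$ whether the rightmost open block was extended flatly or formed by a downward merge—and checking that the $S'(\sigma,S)$ condition propagates correctly under closing and merging. Once this bijection is in place, the exponent bookkeeping confirming each weight factor is routine, and \eqref{Eq:Monomial} follows.
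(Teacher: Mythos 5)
Your proposal is correct, and its skeleton --- induction on $n$, right-multiplication by $X(a_{n+1})$, and the five-term expansion of $\KS{b_1 \otimes \ldots \otimes b_m} X(b_{m+1})$ obtained by solving the recursion of Definition~\ref{Defn:Free-Meixner-Wick} --- is exactly the paper's. Where you genuinely diverge is in the coefficient bookkeeping. The paper does not use Lemma~\ref{Lemma:Kappa} at all: it keeps the refining partition $\sigma$ explicit throughout, so that each of the five operations becomes a weight-preserving bijection on admissible triples $(\pi, S, \sigma)$, with the weight multiplied by exactly $1, \alpha, \beta, t, \gamma$ by direct exponent counting in the definition of $C^{(\pi, S)}_{\alpha, \beta, t, \gamma}$; this is precisely your ``equivalently, and perhaps more transparently'' variant. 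Your primary route instead works at the level of pairs $(\pi', S')$, invoking the factorization of Lemma~\ref{Lemma:Kappa} and verifying the coefficient of the block $W \ni n+1$ via the Motzkin recursion; this is a valid alternative, and it makes the role of the Motzkin weights more visible, at the price of having to enumerate \emph{all} preimages of a given $(\pi', S')$. That price exposes the one misstatement in your write-up: the claim that ``these five classes are disjoint and exhaust $\mc{INC}(n+1)$, and the erase-$n+1$ map is well defined'' is false at the level of pairs, since the images of operations (iii) and (v) overlap --- an open block $W$ of size at least $3$ containing $n+1$ arises once from a flat extension and $\abs{W}-2$ times from merges --- which is exactly why the coefficient of such a block must be assembled as the sum $\beta\, \omega^{\abs{W}-1}_{\alpha,\beta,t,\gamma} + \gamma \sum_{p+q = \abs{W}-1} \omega^{p}_{\alpha,\beta,t,\gamma} \omega^{q}_{\alpha,\beta,t,\gamma}$ rather than read off from a single term, and why $C^{(\pi,S)}_{\alpha,\beta,t,\gamma}$ is a sum over $\sigma$ in the first place. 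Disjointness and exhaustion do hold, but only for triples: as you yourself note in the final paragraph, the block of $\sigma'$ containing $n+1$ (open for (iii), closed for (v)) is what separates the two cases, and this is the paper's device, making its version a clean bijection that needs no summation identity. Since your Motzkin-recursion computation silently corrects the disjointness claim, either way of finishing yields a complete proof.
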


\begin{proof}
By induction
\begin{equation*}
\begin{split}
& \prod_{i=1}^n X(a_i) X(a_{n+1}) \\
&\quad = \sum_{(\pi, S) \in \mc{INC}(n)} C^{(\pi, S)}_{\alpha, \beta, t, \gamma} \prod_{U \in \pi \setminus S} \Exp{a_U} \KS{\bigotimes_{V \in S} a_V} X(a_{n+1}) \\
&\quad = \sum_{(\pi, S) \in \mc{INC}(n)} \sum_{\substack{\sigma \leq \pi, \\ U \in \pi \setminus S \Rightarrow \sigma|_U \in \mc{NC}'(U), \\ \Sing(\sigma) \subset \Sing(\pi) \cup S'(\sigma, S)}} \alpha^{\abs{\Sing(\pi \setminus S)}} \beta^{n - 2 \abs{\sigma} + \abs{S} + \abs{\Sing(\pi \setminus S)}} t^{\abs{\pi \setminus S} - \abs{\Sing(\pi \setminus S)}} \gamma^{\abs{\sigma} - \abs{\pi}} \prod_{U \in \pi \setminus S} \Exp{a_U} \\
&\qquad\qquad \Biggl(\KS{a_{V_1} \otimes a_{V_2} \otimes \ldots \otimes a_{V_j} \otimes a_{n+1} : \set{V_1 < V_2 < \ldots < V_j} = S} \\
&\qquad\qquad + \alpha \KS{a_{V_1} \otimes a_{V_2} \otimes \ldots \otimes a_{V_j} : \set{V_1 < V_2 < \ldots < V_j} = S} \Exp{a_{n+1}} \\
&\qquad\qquad + \beta \chf{\abs{S} \geq 1} \KS{a_{V_1} \otimes a_{V_2} \otimes \ldots \otimes a_{V_j} a_{n+1} : \set{V_1 < V_2 < \ldots < V_j} = S} \\
&\qquad\qquad + t \chf{\abs{S} \geq 1} \KS{a_{V_1} \otimes a_{V_2} \otimes \ldots \otimes a_{V_{j-1}} : \set{V_1 < V_2 < \ldots < V_j} = S} \Exp{a_{V_j} a_{n+1}} \\
&\qquad\qquad + \gamma \chf{\abs{S} \geq 2} \KS{a_{V_1} \otimes a_{V_2} \otimes \ldots \otimes a_{V_{j-1}} a_{V_j} a_{n+1} : \set{V_1 < V_2 < \ldots < V_j} = S} \Biggr).
\end{split}
\end{equation*}
For fixed $(\pi, S, \sigma)$, the first term produces all the triples $(\pi', S', \sigma')$ with
\[
(\pi', S') \in \mc{INC}(n+1),
\sigma' \leq \pi', U \in \pi' \setminus S' \Rightarrow \sigma'|_U \in \mc{NC}'(U), \Sing(\sigma') \subset \Sing(\pi') \cup S'(\sigma', S')
\]
in which $n+1$ is an open singleton in $\pi'$ (and so in $\sigma'$). Since $n$, $\abs{\pi}$, $\abs{S}$, and $\abs{\sigma}$ are all incremented by $1$, $C^{(\pi, S)}_{\alpha, \beta, t, \gamma}$ does not change. The second term produces all triples in which $n+1$ is a closed singleton in $\pi'$ (and so in $\sigma'$). Since $n$, $\abs{\pi}$, $\abs{\pi \setminus S}$, $\abs{\Sing(\pi \setminus S)}$, and $\abs{\sigma}$ are all incremented by $1$, $C^{(\pi, S)}$ is multiplied by $\alpha$. The third term produces all triples in which $n+1$ belongs to an open block in both $\sigma'$ and $\pi'$, each of size at least $2$, by adjoining it to the largest open block of $\pi$ and the corresponding open block of $\sigma$. Since only $n$ is incremented, $C^{(\pi, S)}$ is multiplied by $\beta$. The fourth term produces all triples in which $n+1$ belongs to a closed non-singleton block of $\pi'$ (and so also of $\sigma'$), by adjoining it to the largest open block of $\pi$ and the corresponding open block of $\sigma$, and closing them both. Since $n$ is incremented by $1$ and $\abs{S}$ decreased by $1$, $C^{(\pi, S)}$ is multiplied by $t$. The fifth term produces all triples in which $n+1$ belongs to an open block of $\pi'$ but a closed block of $\sigma'$, by adjoining $n+1$ to the second largest open block of $\pi$ and the corresponding open block of $\sigma$, combining the two largest open blocks of $\pi$, and closing the open block of $\sigma$ which belonged to the largest open block of $\pi$. Since $n$ is incremented by $1$ while $\abs{\pi}$ and $\abs{S}$ are decreased by $1$, $C^{(\pi, S)}$ is multiplied by $\gamma$. These five classes are disjoint and exhaust the triples $(\pi', S', \sigma')$ above.
\end{proof}

\begin{Cor}
For the state corresponding to the Wick products from Definition~\ref{Defn:Free-Meixner-Wick}, the joint moments are
\begin{equation}
\label{Eq:Moment}
\state{X(a_1) X(a_2) \ldots X(a_n)} = \sum_{\pi \in \mc{NC}(n)} C^{\pi}_{\alpha, \beta, t, \gamma} \Exp{a_1 \otimes \ldots \otimes a_n}^\pi.
\end{equation}
\end{Cor}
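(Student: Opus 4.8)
The plan is to derive \eqref{Eq:Moment} by applying the state $\phi$ directly to the monomial expansion \eqref{Eq:Monomial} of Theorem~\ref{Thm:Free-Meixner-Wick}. Recall that $\phi$ is the functional determined by $\state{1} = 1$ and $\state{\KS{a_1 \otimes \ldots \otimes a_m}} = 0$ for every $m \geq 1$. Since $\phi$ is linear and the scalar factors $C^{(\pi, S)}_{\alpha, \beta, t, \gamma}$ and $\prod_{U \in \pi \setminus S} \Exp{a_U}$ pull out, the first step is simply to write
\[
\state{X(a_1) \ldots X(a_n)} = \sum_{(\pi, S) \in \mc{INC}(n)} C^{(\pi, S)}_{\alpha, \beta, t, \gamma} \prod_{U \in \pi \setminus S} \Exp{a_U} \, \state{\KS{\bigotimes_{V \in S} a_V}}.
\]

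Next I would isolate the surviving terms. The factor $\state{\KS{\bigotimes_{V \in S} a_V}}$ vanishes whenever $\abs{S} \geq 1$, because then $\KS{\bigotimes_{V \in S} a_V}$ is a Wick product of positive length $\abs{S}$; it equals $\state{1} = 1$ precisely when $S = \emptyset$, the empty Wick product being the unit. Hence the sum collapses onto those $(\pi, S) \in \mc{INC}(n)$ with $S = \emptyset$. For such pairs the constraint $S \subset \Outer(\pi)$ is vacuous, so $(\pi, \emptyset)$ ranges over all $\pi \in \mc{NC}(n)$, and by the definition of the coefficients $C^{(\pi, \emptyset)}_{\alpha, \beta, t, \gamma} = C^\pi_{\alpha, \beta, t, \gamma}$. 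This already yields
\[
\state{X(a_1) \ldots X(a_n)} = \sum_{\pi \in \mc{NC}(n)} C^\pi_{\alpha, \beta, t, \gamma} \prod_{U \in \pi} \Exp{a_U}.
\]

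It then remains to identify $\prod_{U \in \pi} \Exp{a_U}$ with $\Exp{a_1 \otimes \ldots \otimes a_n}^\pi$, where $a_U$ denotes the product of the entries of $U$ taken in increasing order. In the scalar case $\mc{D} = \mf{C}$ this is immediate: the values of $\Exp{\cdot}$ are scalars, so the nested products in the recursive definition of $\Exp{\cdot}^\pi$ from Remark~\ref{Remark:free-expansions} commute and collapse, block by block, exactly to $\prod_{U \in \pi} \Exp{a_U}$. For general $\mc{D}$ I would instead prove this equality by induction on the outer-block and nesting structure used to define $\Exp{\cdot}^\pi$, matching each recursive evaluation step against the corresponding nested factor; the product $\prod_{U \in \pi \setminus S} \Exp{a_U}$ written in Theorem~\ref{Thm:Free-Meixner-Wick} is to be read in this nested sense when $\mc{D} \neq \mf{C}$. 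I expect this bookkeeping of the nesting in the operator-valued setting to be the only genuine obstacle; everything else is a direct consequence of the monomial expansion together with the defining property of $\phi$.
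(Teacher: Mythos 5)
Your proposal is correct and is exactly the argument the paper intends: the Corollary is stated without proof because it follows immediately from equation~\eqref{Eq:Monomial} by applying $\phi$, which kills every term with $S \neq \emptyset$ and leaves $\sum_{\pi \in \mc{NC}(n)} C^{(\pi,\emptyset)}_{\alpha,\beta,t,\gamma} \prod_{U \in \pi} \Exp{a_U}$, with $C^{(\pi,\emptyset)} = C^\pi$ by definition and $\prod_{U \in \pi} \Exp{a_U} = \Exp{a_1 \otimes \ldots \otimes a_n}^\pi$ in the scalar case. Your extra care about the nested reading of the blocks when $\mc{D} \neq \mf{C}$ is consistent with the paper's remark that the theorem ``carries over verbatim for general $\mc{D}$,'' and does not change the argument.
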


\begin{Remark}
\label{Remark:Free-cumulants}
Using Lemma~\ref{Lemma:Kappa}, we may re-write formula~\eqref{Eq:Moment} as
\[
\state{X(a_1) X(a_2) \ldots X(a_n)}
= \sum_{\pi \in \mc{NC}(n)} \left( \prod_{U \in \pi} \kappa_{\alpha, \beta, t, \gamma}^{\abs{U}} \right) \Exp{a_1 \otimes \ldots \otimes a_n}^\pi.
\]
Therefore by definition, the joint free cumulants of $\set{X(a_i)}$ are
\[
R[X(a_1)] = \kappa_{\alpha, \beta, t, \gamma}^{1} \Exp{a_1} = \alpha \Exp{a_1},
\]
\[
R[X(a_1), \ldots, X(a_n)] = \kappa_{\alpha, \beta, t, \gamma}^{n} \Exp{a_1 \ldots a_n} = t M_{n-2}(\beta, \gamma) \Exp{a_1 \ldots a_n}.
\]
Compare with Theorem~8 in \cite{Sniady-SWN}. Note that $M_{n-2}(1, 1) = M_{n-2}$, the Motzkin number. On the other hand,
\[
M_{n-2}(2, 1) = \sum_{\tau \in \mc{NC}_{\geq 2}'(n)} 2^{n - 2 \abs{\tau}} = \abs{\mc{NC}'(n)} = c_{n-1},
\]
the Catalan number. In general $M_n(\beta, \gamma)$ are the moments of a semicircular distribution with mean $\beta$ and variance $\gamma$. Cf. Theorem~2 in \cite{AnsFree-Meixner}.
\end{Remark}

The following proposition is, roughly speaking, taken as the definition in \cite{Sniady-SWN}.

\begin{Prop}
\label{Prop:Inner-product}
\begin{multline*}
\state{\KS{a_1 \otimes \ldots \otimes a_{n}} \KS{b_k \otimes \ldots \otimes b_1}} \\
= \delta_{n=k} \sum_{\pi \in \Int(n)} t^{\abs{\pi}} \gamma^{n - \abs{\pi}} \Exp{a_1 \otimes \ldots \otimes a_n \otimes b_k \otimes \ldots \otimes b_1}^{\set{U \cup (2n+1-U) : U \in \pi}}.
\end{multline*}
\end{Prop}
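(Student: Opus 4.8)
The plan is to prove, by strong induction on $n$ and for all $k$ simultaneously, the sharper statement that $\state{\KS{a_1 \otimes \ldots \otimes a_n}\KS{b_k \otimes \ldots \otimes b_1}}$ vanishes when $n \neq k$ and equals the displayed interval-partition sum when $n = k$. The one extra ingredient beyond the defining recursion of Definition~\ref{Defn:Free-Meixner-Wick} is its mirror image, a \emph{left}-multiplication recursion
\[
X(c)\KS{b_k \otimes \ldots \otimes b_1} = \KS{c \otimes b_k \otimes \ldots \otimes b_1} + \alpha \Exp{c}\KS{b_k \otimes \ldots \otimes b_1} + \beta \KS{c b_k \otimes b_{k-1} \otimes \ldots \otimes b_1} + t \Exp{c b_k}\KS{b_{k-1} \otimes \ldots \otimes b_1} + \gamma \KS{c b_k b_{k-1} \otimes b_{k-2} \otimes \ldots \otimes b_1},
\]
which merges the new letter $c$ into the \emph{first} slot rather than the last. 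I would establish this by a separate induction on $k$ directly from the defining recursion (the base cases $k=0,1$ being an immediate computation).

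First I would apply the defining recursion to the left factor, writing $\KS{a_1 \otimes \ldots \otimes a_n}$ as $\KS{a_1 \otimes \ldots \otimes a_{n-1}} X(a_n)$ minus four lower-degree terms. The only awkward summand is $\state{\KS{a_1 \otimes \ldots \otimes a_{n-1}} X(a_n)\KS{b_k \otimes \ldots \otimes b_1}}$, in which $X(a_n)$ is trapped between the two Wick products; here the left recursion rewrites $X(a_n)\KS{b_k \otimes \ldots \otimes b_1}$ as a sum of five Wick products, each of which is then multiplied on the left by $\KS{a_1 \otimes \ldots \otimes a_{n-1}}$. After this single step every resulting summand is again an inner product of two Wick products whose left factor has degree $n-1$, so the induction hypothesis applies to all of them. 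The two $\alpha$-terms, one from each recursion, cancel exactly, which is the mechanism by which $\alpha$ disappears from the final formula.

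The remaining work is bookkeeping. By the induction hypothesis each surviving inner product is supported on equal degrees, and inspecting the six terms shows a nonzero contribution is possible only when the degree defect $n-k$ lies in $\set{0,1,2}$; in particular $I=0$ whenever $n<k$, while the base case $n=0$ gives $\state{\KS{b_k \otimes \ldots \otimes b_1}} = \delta_{k=0}$. For $n=k+1$ the contribution reduces to a single $\beta$-weighted difference of two diagonal inner products, which cancels because, once both are evaluated by the induction hypothesis, the merged letters $a_n b_k$ in one and $a_{n-1}a_n$ in the other enter the relevant block as the \emph{same} concatenated product inside $\Exp{\cdot}^\pi$. For $n=k+2$ the three diagonal terms cancel for the same reason: one recognizes the first term as a diagonal inner product of degree $k+1$ and applies to it the two-term diagonal recursion supplied by the induction hypothesis, whose $t$- and $\gamma$-pieces match the other two terms letter-for-letter after the same concatenation. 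What survives on the diagonal $n=k$ are only the $t$- and $\gamma$-terms, yielding
\[
\state{\KS{a_1 \otimes \ldots \otimes a_n}\KS{b_n \otimes \ldots \otimes b_1}} = t\Exp{a_n b_n}\state{\KS{a_1 \otimes \ldots \otimes a_{n-1}}\KS{b_{n-1} \otimes \ldots \otimes b_1}} + \gamma\,\state{\KS{a_1 \otimes \ldots \otimes a_{n-1}}\KS{a_n b_n b_{n-1} \otimes b_{n-2} \otimes \ldots \otimes b_1}}.
\]

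Finally I would match this recursion to the claimed formula through the bijection $\Int(n) \simeq \Int(n-1) \sqcup \Int(n-1)$ that either deletes a last singleton interval $\set{n}$ or shrinks the last interval $[i,n]$ to $[i,n-1]$. Under the map $\pi \mapsto \set{U \cup (2n+1-U)}$ the last interval indexes the \emph{innermost} block, so deleting the singleton $\set{n}$ strips off the central pair $\set{a_n,b_n}$ and produces the factor $t\Exp{a_n b_n}$, matching the first term; shrinking $[i,n]$ corresponds to the $\gamma$-term, since merging $a_n b_n b_{n-1}$ into one argument both reconstructs the nested expectation of the one-longer innermost block and multiplies its $\Int(n-1)$-weight $t\gamma^{\abs{U}-2}$ by an extra $\gamma$, giving the correct $t\gamma^{\abs{U}-1}$ per block and hence the overall $t^{\abs{\pi}}\gamma^{n-\abs{\pi}}$. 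The main obstacle I anticipate is twofold: proving the left recursion cleanly, and carrying out the off-diagonal ($n=k+1$ and $n=k+2$) cancellations — both of which rest on the single associativity principle that merging adjacent letters into one argument leaves the value of $\Exp{\cdot}^\pi$ on the affected block unchanged.
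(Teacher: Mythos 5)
Your proposal is correct and follows essentially the same route as the paper's own proof: expand the left factor by the defining recursion, push $X(a_n)$ into the right factor via the mirror (left-multiplication) recursion, cancel the $\alpha$-terms, cancel the $\beta$-terms term-by-term at $n=k+1$, cancel the $t$- and $\gamma$-terms at $n=k+2$ via the singleton/non-singleton split of the first sum, and match the surviving diagonal recursion to the interval-partition formula. The only difference is bookkeeping of one ingredient: you prove the left recursion by a separate induction on $k$, while the paper obtains it from the adjoint symmetry $\KS{a_1 \otimes \ldots \otimes a_n}^\ast = \KS{a_n^\ast \otimes \ldots \otimes a_1^\ast}$ of Proposition~\ref{Prop:Tracial} — both are valid, and yours avoids that forward reference.
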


\begin{proof}
For $n = 0$ and arbitrary $k$, the result follows from the definition of $\phi$. So it suffices to show that the result for $(u, v) \leq (n-1, k+1)$ implies the result for $(n, k)$. For $n \geq 1$,
\begin{equation}
\label{Eq:Inner-product-1}
\begin{split}
& \state{\KS{a_1 \otimes \ldots \otimes a_{n}} \KS{b_k \otimes \ldots \otimes b_1}} \\
&\quad = \state{\KS{a_1 \otimes \ldots \otimes a_{n-1}} X(a_{n}) \KS{b_k \otimes \ldots \otimes b_1}} \\
&\quad\quad - \alpha \state{\KS{a_1 \otimes \ldots \otimes a_{n-1}} \Exp{a_{n}} \KS{b_k \otimes \ldots \otimes b_1}} \\
&\quad\quad - \beta \state{\KS{a_1 \otimes \ldots \otimes a_{n-2} \otimes a_{n-1} a_{n}} \KS{b_k \otimes \ldots \otimes b_1}} \\
&\quad\quad - t \state{\KS{a_1 \otimes \ldots \otimes a_{n-2}} \Exp{a_{n-1} a_{n}} \KS{b_k \otimes \ldots \otimes b_1}} \\
&\quad\quad - \gamma \state{\KS{a_1 \otimes \ldots \otimes a_{n-3} \otimes a_{n-2} a_{n-1} a_{n}} \KS{b_k \otimes \ldots \otimes b_1}} \\
\end{split}
\end{equation}
Applying the recursion in Definition~\ref{Defn:Free-Meixner-Wick} to the first term (and using the adjoint symmetry in Proposition~\ref{Prop:Tracial} below), this term equals
\begin{equation*}
\label{Eq:Inner-product-2}
\begin{split}
&\qquad  \state{\KS{a_1 \otimes \ldots \otimes a_{n-1}} \KS{a_{n} \otimes  b_k \otimes \ldots \otimes b_1}} \\
&+ \alpha \state{\KS{a_1 \otimes \ldots \otimes a_{n-1}} \Exp{a_{n}} \KS{b_k \otimes \ldots \otimes b_1}} \\
&+ \beta \state{\KS{a_1 \otimes \ldots \otimes a_{n-1}} \KS{a_{n} b_k \otimes \ldots \otimes b_1}} \\
&+ t \state{\KS{a_1 \otimes \ldots \otimes a_{n-1}} \Exp{a_{n} b_k} \KS{b_{k-1} \otimes \ldots \otimes b_1}} \\
&+ \gamma \state{\KS{a_1 \otimes \ldots \otimes a_{n-1}} \KS{a_{n} b_k b_{k-1} \otimes b_{k-2} \ldots \otimes b_1}}.
\end{split}
\end{equation*}
This the expression~\eqref{Eq:Inner-product-1} equals
\begin{equation*}
\begin{split}
&\qquad \delta_{n-1=k+1} \sum_{\pi \in \Int(n-1)} t^{\abs{\pi}} \gamma^{n - \abs{\pi}} \Exp{a_1 \otimes \ldots \otimes a_n \otimes b_k \otimes \ldots \otimes b_1}^{\set{U \cup (2n-1-U) : U \in \pi}} \\
&+ \beta \delta_{n-1=k} \sum_{\pi \in \Int(n-1)} t^{\abs{\pi}} \gamma^{n - \abs{\pi}} \Exp{a_1 \otimes \ldots \otimes a_{n-1} \otimes a_n b_k \otimes \ldots \otimes b_1}^{\set{U \cup (2n-1-U) : U \in \pi}} \\
&+ t \Exp{a_n b_k} \delta_{n=k} \sum_{\pi \in \Int(n-1)} t^{\abs{\pi}} \gamma^{n - \abs{\pi}} \Exp{a_1 \otimes \ldots \otimes a_{n-1} \otimes b_{k-1} \otimes \ldots \otimes b_1}^{\set{U \cup (2n-1-U) : U \in \pi}} \\
&+ \gamma \delta_{n=k} \sum_{\pi \in \Int(n-1)} t^{\abs{\pi}} \gamma^{n - \abs{\pi}} \Exp{a_1 \otimes \ldots \otimes a_{n-1} \otimes a_n b_k b_{k-1} \otimes b_{k-2} \otimes \ldots \otimes b_1}^{\set{U \cup (2n-1-U) : U \in \pi}} \\
&- \beta \delta_{n-1=k} \sum_{\pi \in \Int(n-1)} t^{\abs{\pi}} \gamma^{n - \abs{\pi}} \Exp{a_1 \otimes \ldots \otimes a_{n-1} a_n \otimes b_k \otimes \ldots \otimes b_1}^{\set{U \cup (2n-1-U) : U \in \pi}} \\
&- t \Exp{a_{n-1} a_n} \delta_{n-2=k} \sum_{\pi \in \Int(n-2)} t^{\abs{\pi}} \gamma^{n - \abs{\pi}} \Exp{a_1 \otimes \ldots \otimes a_{n-2} \otimes b_k \otimes \ldots \otimes b_1}^{\set{U \cup (2n-3-U) : U \in \pi}} \\
&- \gamma \delta_{n-2=k} \sum_{\pi \in \Int(n-2)} t^{\abs{\pi}} \gamma^{n - \abs{\pi}} \Exp{a_1 \otimes \ldots \otimes a_{n-2} a_{n-1} a_n \otimes b_k \otimes \ldots \otimes b_1}^{\set{U \cup (2n-3-U) : U \in \pi}} \\
\end{split}
\end{equation*}
The second and fifth sums cancel term-by-term. Next, suppose $n = k+2$. If in a partition $\pi \in \Int(n-1)$ in the first sum, $(n-1)$ is a singleton, the term corresponding to this partition cancels with the corresponding term in the sixth sum. If $(n-1)$  is not a singleton, the term corresponding to this partition cancels with the corresponding term in the seventh sum. The sum of the remaining sums (third and fourth), for $n = k$, equals
\[
\delta_{n=k} \sum_{\pi \in \Int(n)} t^{\abs{\pi}} \gamma^{n - \abs{\pi}} \Exp{a_n^\ast \otimes \ldots \otimes a_1^\ast \otimes b_1 \otimes \ldots \otimes b_n}^{\set{U \cup (2n+1-U) : U \in \pi}}
\]
by the same decomposition.
\end{proof}

\begin{Thm}
\label{Thm:Inversion}
We may expand
\begin{equation}
\label{Eq:free-Meixner-expansion}
\KS{a_1 \otimes \ldots \otimes a_n} = \sum_{\substack{(\pi, S) \\ \pi \in \Int(n) \\ S \subset \pi}} (-1)^{n - \abs{S}} \prod_{U \in \pi \setminus S} c_{\abs{U}} \Exp{a_U} \prod_{V \in S} o_{\abs{V}} X(a_V),
\end{equation}
where
\[
c_k = \alpha o_k - t o_{k-1}.
\]
Case I: $\gamma = 0$. Then
\[
o_k = \beta^{k-1}, \qquad c_k = (\alpha \beta - t) \beta^{k-2}
\]
For $\gamma \neq 0$, factor $1 - \beta z + \gamma z^2 = (1 - u z)(1 - v z)$.

Case II: $\gamma \neq 0$, $\beta^2 \neq 4 \gamma$, so that $u \neq v$. Then
\[
o_k = \frac{1}{u - v} (u^k - v^k), \qquad c_k = \frac{1}{u-v} \left( \alpha (u^k - v^k) - t (u^{k-1} - v^{k-1}) \right).
\]
Case II$'$: if in addition, $\alpha^2 - \alpha \beta t + \gamma t^2 = 0$, so that $v = t/\alpha$, then
\[
o_k = \frac{1}{\beta - 2t/\alpha} ((\beta - t/\alpha)^k - (t/\alpha)^k), \qquad c_k = \alpha (\beta - t/\alpha)^{k-1}.
\]
Case III: $\gamma \neq 0$, $\beta^2 = 4 \gamma$, so that $u = v = \beta/2$. Then
\[
o_k = k (\beta/2)^{k-1}, \qquad c_k = (\alpha k (\beta/2) - t (k-1)) (\beta/2)^{k-2}.
\]
Case III$'$: if in addition, $\alpha \beta = 2 t$, so that $u = v = t/\alpha$, then
\[
o_k = k (\beta/2)^{k-1}, \qquad c_k = \alpha (\beta/2)^{k-1}.
\]
\end{Thm}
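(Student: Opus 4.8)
The plan is to prove the expansion~\eqref{Eq:free-Meixner-expansion} by induction on $n$ directly from the defining recursion of Definition~\ref{Defn:Free-Meixner-Wick}, in the style of the proof of Theorem~\ref{Thm:Free-Meixner-Wick}. I would treat $o_k$ and $c_k$ as undetermined sequences, subject only to $c_k = \alpha o_k - t o_{k-1}$, and let the induction reveal the recursions they are forced to satisfy; solving those recursions then produces Cases~I--III$'$. The base case $n=1$ is immediate, since the only interval partition of $[1]$ gives $\KS{a_1} = X(a_1) - \alpha \Exp{a_1}$, which fixes $o_1 = 1$ and $c_1 = \alpha = \alpha o_1 - t o_0$ under the convention $o_0 = 0$. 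Computing $\KS{a_1 \otimes a_2}$ from the recursion reproduces the $n=2$ instance of~\eqref{Eq:free-Meixner-expansion} provided $o_2 = \beta$ and $c_2 = \alpha\beta - t$, a useful consistency check.

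For the inductive step I would substitute the inductive hypothesis into each of the five terms on the right-hand side of the recursion for $\KS{a_1\otimes\ldots\otimes a_{n+1}}$, and sort the resulting monomials according to the interval block $B$ containing the index $n+1$, recording its size $k$ and whether it is open (carrying a factor $o_k X(a_B)$) or closed (carrying $c_k \Exp{a_B}$). The classification is: the term $\KS{\ldots}X(a_{n+1})$ produces exactly the configurations in which $\{n+1\}$ is an open singleton; the term $-\alpha\KS{\ldots}\Exp{a_{n+1}}$ those in which it is a closed singleton; the term $-\beta\KS{\ldots\otimes a_n a_{n+1}}$ those in which $B$ contains $\{n,n+1\}$, with $B$ open or closed according to whether the merged letter sits in an open or closed block of the inner expansion; the term $-t\KS{\ldots}\Exp{a_n a_{n+1}}$ the closed block $\{n,n+1\}$ of size exactly two; and the term $-\gamma\KS{\ldots\otimes a_{n-1}a_n a_{n+1}}$ those in which $B\supseteq\{n-1,n,n+1\}$. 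Tracking signs via the global factor $(-1)^{n+1-\abs{S}}$ against the inductive signs, the open-block contributions (from the first, third and fifth terms) force $o_k = \beta o_{k-1} - \gamma o_{k-2}$ for $k\geq 2$, while the closed-block contributions (from the second, third, fourth and fifth terms) force the same linear recurrence for $c_k$ with initial data $c_1 = \alpha$ and $c_2 = \alpha\beta - t$; since $\alpha o_k - t o_{k-1}$ satisfies this recurrence and matches this data, one concludes $c_k = \alpha o_k - t o_{k-1}$ for all $k$.

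With the recurrence $o_k = \beta o_{k-1} - \gamma o_{k-2}$, $o_0 = 0$, $o_1 = 1$ in hand, the remaining cases are elementary. The generating function is $\sum_{k\geq 1} o_k z^k = z/(1 - \beta z + \gamma z^2)$, so everything follows from factoring the denominator. If $\gamma = 0$ the recurrence collapses to $o_k = \beta o_{k-1}$, giving $o_k = \beta^{k-1}$ and $c_k = (\alpha\beta - t)\beta^{k-2}$ (Case~I). If $\gamma\neq 0$ write $1 - \beta z + \gamma z^2 = (1-uz)(1-vz)$, so $u+v = \beta$ and $uv = \gamma$; then $u,v$ are the characteristic roots and $o_k = (u^k - v^k)/(u-v)$ when $u\neq v$ (Case~II), and $o_k = k(\beta/2)^{k-1}$ when $u=v=\beta/2$ (Case~III). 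Substituting into $c_k = \alpha o_k - t o_{k-1}$ yields the stated formulas. The primed subcases are a short computation: the hypothesis $\alpha^2 - \alpha\beta t + \gamma t^2 = 0$ says precisely that $v = t/\alpha$ is a root, whence $\alpha(u^k - v^k) - t(u^{k-1}-v^{k-1}) = \alpha u^{k-1}(u-v)$ and $c_k = \alpha u^{k-1} = \alpha(\beta - t/\alpha)^{k-1}$ (Case~II$'$), and likewise $\alpha\beta = 2t$ with $u=v=\beta/2=t/\alpha$ collapses the telescoping difference to $c_k = \alpha(\beta/2)^{k-1}$ (Case~III$'$).

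The main obstacle is the bookkeeping in the inductive step: one must check that the configuration types above are genuinely exhaustive and that, for each fixed target configuration on $[n+1]$ with $B$ of size at least three, the third and fifth terms (respectively the third, fourth and fifth terms in the closed case) contribute from distinct inner configurations whose weights add up to the claimed linear combination. Keeping the signs consistent across the shifts in the number of arguments and in $\abs{S}$ is the delicate point, entirely parallel to---but slightly heavier than---the five-way case analysis in the proof of Theorem~\ref{Thm:Free-Meixner-Wick}.
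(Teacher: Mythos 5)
Your proposal is correct and takes essentially the same route as the paper's own proof: substitute the interval-partition ansatz into the defining recursion, match coefficients according to the size and open/closed status of the block containing $n+1$ to force $o_1 = 1$, $c_1 = \alpha$ and the recurrences $o_k = \beta o_{k-1} - \gamma o_{k-2}$, $c_k = \beta c_{k-1} - \gamma c_{k-2}$, and then solve the linear recurrences (the paper via the generating functions $O(z) = 1/(1-\beta z + \gamma z^2)$, $C(z) = (\alpha - tz)O(z)$, you via characteristic roots --- the same computation). The only cosmetic difference is that the paper \emph{derives} the relation $c_k = \alpha o_k - t o_{k-1}$ from $C(z) = \alpha O(z) - t z O(z)$, whereas you \emph{verify} it by uniqueness of solutions of the recurrence with given initial data; these are interchangeable.
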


\begin{proof}
Write $\KS{a_1 \otimes \ldots \otimes a_n}$ in the form \eqref{Eq:free-Meixner-expansion}; we will show that this is possible by exhibiting coefficients in this expansion. Plugging in this expansion into the recursion in Definition~\ref{Defn:Free-Meixner-Wick}, we obtain
\[
\begin{split}
\sum_{\substack{(\pi, S) \\ \pi \in \Int(n+1) \\ S \subset \pi}} (-1)^{n + 1 - \abs{S}}
& = \sum_{\substack{(\pi, S) \\ \pi \in \Int(n) \\ S \subset \pi}} (-1)^{n - \abs{S}} X(a_{n+1})
- \alpha \sum_{\substack{(\pi, S) \\ \pi \in \Int(n) \\ S \subset \pi}} (-1)^{n - \abs{S}} \Exp{a_{n+1}} \\
& - \beta \sum_{\substack{(\pi, S) \\ \pi \in \Int([n-1] \cup \set{\set{n, n+1}}) \\ S \subset \pi}} (-1)^{n - \abs{S}} \\
& - t \sum_{\substack{(\pi, S) \\ \pi \in \Int(n-1) \\ S \subset \pi}} (-1)^{n - 1 - \abs{S}} \Exp{a_n a_{n+1}}
- \gamma \sum_{\substack{(\pi, S) \\ \pi \in \Int([n-2] \cup \set{\set{n-1, n, n+1}}) \\ S \subset \pi}} (-1)^{n - 1 - \abs{S}},
\end{split}
\]
where in each term we sum the expression
\[
\prod_{U \in \pi \setminus S} c_{\abs{U}} \Exp{a_U} \prod_{V \in S} o_{\abs{V}} X(a_V).
\]
Now compare the factors corresponding to the block $B$ containing $n+1$ on the left-hand-side. If $B$ is an open singleton, it matches with a term in the first sum, with the same coefficient (since the number of open blocks on the left is one more than on the right). Thus $o_1 = 1$. For the remaining terms, the size of $S$ does not change, so we omit it from the coefficients. If $B$ is a closed singleton, it matches with a term from the second sum, and the coefficients are $(-1)^{n+1} c_1 = (-1)^n (-\alpha)$, so $c_1 = \alpha$. If $B$ is an open pair, it matches with a term in the third sum, and the coefficients are $(-1)^{n+1} o_2 = (-1)^n (- \beta) o_1$, so $o_2 = \beta o_1$. If $B$ is a closed pair, it matches with terms in the third and the fourth sums, and the coefficients are $(-1)^{n+1} c_2 = (-1)^n (- \beta) c_1 + (-1)^{n-1} (-t) $, so $c_2 = \beta c_1 - t$. If $B$ is a larger block, it matches with terms in the third and the fifth sums, and the coefficients are $(-1)^{n+1} o_k = (-1)^n (- \beta) o_{k-1} + (-1)^{n-1} (- \gamma) o_{k-2}$ (and the corresponding expression for $c_k$), so that
\[
o_k = \beta o_{k-1} - \gamma o_{k-2}, \qquad c_k = \beta c_{k-1} - \gamma c_{k-2}
\]
for $k \geq 3$. Let
\[
O(z) = \sum_{k=1}^\infty o_k z^{k-1}, \qquad C(z) = \sum_{k=1}^\infty c_k z^{k-1}.
\]
Then
\[
O(z) = 1 + \beta z O(z) - \gamma z^2 O(z), \qquad C(z) = \alpha - t z + \beta z C(z) - \gamma z^2 C(z),
\]
so
\[
O(z) = \frac{1}{1 - \beta z + \gamma z^2}, \qquad C(z) = \frac{\alpha - t z}{1 - \beta z + \gamma z^2} = \alpha O(z) - t z O(z).
\]
The specific cases follow.
\end{proof}

\section{Counterexample}
\label{Sec:Counterexample}

The following is Definition~4.9 from \cite{AnsAppell}. Here $\mc{M}, \Gamma(\mc{M})$ are as in the introduction.

\begin{Defn}
For $a_i \in \mc{M}^{sa}$, define the $q$-Kailath-Segall polynomials by $\KSP{q}{a} = X(a) - \Exp{a}$ and
\begin{equation}
\label{q-KS}
\begin{split}
\KSP{q}{a, a_1, a_2, \ldots, a_n}
& = X(a) \KSP{q}{a_1, a_2, \ldots, a_n}
- \sum_{i=1}^n q^{i-1} \Exp{a a_i} \KSP{q}{a_1, \ldots, \hat{a}_i, \ldots, a_n} \\
&\quad - \sum_{i=1}^n q^{i-1} \KSP{q}{a a_i, \ldots, \hat{a}_i, \ldots, a_n}
- \Exp{a} \KSP{q}{a_1, a_2, \ldots, a_n}.
\end{split}
\end{equation}
This map has a $\mf{C}$-linear extension, so that each $W$ is really a multi-linear map from $\mc{M}$ to $\Gamma(\mc{M})$.
\end{Defn}

\begin{Example}
According to Corollary~4.13 from \cite{AnsAppell},
\[
\statep{q}{\KSP{q}{a_0} \KSP{q}{a_1, a_2, a_3} \KSP{q}{a_4}} = 0.
\]
However a direct calculation shows that in fact
\[
\statep{q}{\KSP{q}{a_0} \KSP{q}{a_1, a_2, a_3} \KSP{q}{a_4}}
= (q - q^2) (\Exp{a_0 a_2} \Exp{a_1 a_3 a_4} - \Exp{a_0 a_2 a_4} \Exp{a_1 a_3}).
\]
To be completely explicit, we consider the case where $a_0 = a_2 = \chf{I}$, $a_1 = a_3 = a_4 = \chf{J}$, $I \cap J = \emptyset$, and the state is the Lebesgue measure. Then we get
\[
\statep{q}{\KSP{q}{a_0} \KSP{q}{a_1, a_2, a_3} \KSP{q}{a_4}}
= (q - q^2) \abs{I} \cdot \abs{J}.
\]
Thus Corollary~4.13, and so also Theorem~4.11 part (c) in \cite{AnsAppell}, are false.
\end{Example}

The formula in Theorem~4.11(c) is true if the arguments of each $W$ are orthogonal; however this does not imply the general result since $\phi_q$ is not tracial. See Remark~\ref{Remark:Ito-isometry}. There are many particular cases when 4.11(c) is true. For the case $q=1$ (classical), and $q=0$ (free), the proof provided in \cite{AnsAppell} still works. For the $q$-Gaussian case, this is Theorem~3.3 in \cite{Effros-Popa}. Finally, for univariate polynomials obtained for equal idempotent $a$ and general $q$, the linearization formulas in Corollary~4.13 also hold \cite{Kim-Stanton-Zeng,Ismail-Kasraoui-Separation}.

\section{Representations and completions}
\label{Sec:Completions}

Let $\mc{M}$ and $\mc{B}$ be $\mc{D}$-bimodules with the actions satisfying \eqref{Eq:Left-right-action}. For a linear $\mc{D}$-bimodule map $F : \mc{M} \rightarrow \mc{B}$, define the map $\mc{F}(F) : \mc{M}^{\otimes_{\mc{D}} n} \rightarrow \mc{B}^{\otimes_{\mc{D}} n}$ by
\[
\mc{F}(F)[d_0 a_1 d_1 \otimes \ldots \otimes a_n d_n] = F(d_0 a_1 d_1) \otimes \ldots \otimes F(a_n d_n),
\]
and the map $\Gamma(F) : \Gamma(\mc{M}) \rightarrow \Gamma(\mc{B})$ by $\Gamma(F)[d] = d$ for $d \in \mc{D}$ and
\[
\Gamma(F) [\KSP{\mc{INC}}{\mb{a}}] = \KSP{\mc{INC}}{\mc{F}(F)[\mb{a}]}.
\]

\begin{Defn}
Let $\mc{M}$ be a star-algebra and $\mc{B}$ a star-subalgebra. An \emph{algebraic conditional expectation} is a star-linear $\mc{B}$-bimodule map $F: \mc{M} \rightarrow \mc{B}$ such that $F^2 = F$. If $\mc{M}$ is a $\mc{D}$-bimodule, $\mc{D} \mc{B} \mc{D} \subset \mc{B}$, and $\tau : \mc{M} \rightarrow \mc{D}$ is a star-linear functional, we say that $F$ preserves $\tau$ if $\tau[F(a)] = \tau[a]$ for $a \in \mc{M}$.
\end{Defn}

\begin{Prop}
Let $\mc{D}, \mc{B}, \mc{M}$ be as in the preceding definition, and $F : \mc{M} \rightarrow \mc{B}$ an algebraic conditional expectation preserving $\Exp{\cdot}$. Then $\Gamma(F) : \Gamma(\mc{M}) \rightarrow \Gamma(\mc{B})$ is an algebraic conditional expectation preserving $\phi_{\mc{INC}}$.
\end{Prop}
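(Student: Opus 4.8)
The plan is to verify the three defining properties of ``algebraic conditional expectation preserving $\Exp{\cdot}$'' for the map $\Gamma(F)$, now with $\Gamma(\mc{M})$, $\Gamma(\mc{B})$, $\mc{D}$, $\phi_{\mc{INC}}$ playing the roles of $\mc{M}$, $\mc{B}$, $\mc{D}$, $\tau$ from the definition: namely that $\Gamma(F)$ is (i) a star-linear $\Gamma(\mc{B})$-bimodule map which is (ii) idempotent and (iii) $\phi_{\mc{INC}}$-preserving. First I would record that $\Gamma(F)$ is well defined: by Proposition~\ref{Prop:free-expansions-2} the polynomials $\KSP{\mc{INC}}{\mb{a}}$ together with $\mc{D}$ furnish a grading of $\Gamma(\mc{M}) \simeq \bigoplus_n \mc{M}^{\otimes_{\mc{D}} n}$, and on each homogeneous piece $\Gamma(F)$ is $\mc{F}(F)$, a $\mc{D}$-bimodule map, transported through the Wick identification. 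In particular $\Gamma(F)$ preserves the degree grading and restricts to the identity on $\mc{D}$. Properties (ii) and (iii) are then nearly immediate: idempotency follows factor by factor from $F^2 = F$, since $\mc{F}(F)^2[\mb{a}] = F(F(a_1)) \otimes \cdots \otimes F(F(a_n)) = \mc{F}(F)[\mb{a}]$; and (iii) holds because $\phi_{\mc{INC}}$ annihilates every Wick product of positive degree and equals the identity on $\mc{D}$, both features being preserved by the degree-preserving, $\mc{D}$-fixing map $\Gamma(F)$. Star-linearity follows from the star-linearity of $F$ together with the adjoint formula $\KSP{\mc{INC}}{\mb{a}}^\ast = \KSP{\mc{INC}}{\mb{a}^\ast}$.

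The substance of the proof is the $\Gamma(\mc{B})$-bimodule property. Since $\Gamma(\mc{B})$ is generated as an algebra by $\mc{D}$ and $\set{X(b): b \in \mc{B}}$, and $\Gamma(F)$ is already a $\mc{D}$-bimodule map, it suffices to intertwine right multiplication by a single $X(b)$, i.e.\ to show $\Gamma(F)\big(\KSP{\mc{INC}}{\mb{a}} X(b)\big) = \Gamma(F)\big(\KSP{\mc{INC}}{\mb{a}}\big) X(b)$ for $b \in \mc{B}$; the general statement $\Gamma(F)(xy) = \Gamma(F)(x)\,y$ for $y \in \Gamma(\mc{B})$ then follows by induction on the degree of $y$. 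I would rewrite $\KSP{\mc{INC}}{a_1 \otimes \cdots \otimes a_n} X(b)$ using the recursion of Proposition~\ref{Prop:free-expansions-2}, producing the four terms with arguments $\ldots \otimes b$, then $\ldots \otimes a_n$ (times the scalar $\Exp{b}$), then $\ldots \otimes a_n b$, and finally $\ldots \otimes a_{n-1}$ (times the scalar $\Exp{a_n b}$), and apply $\Gamma(F)$ to each. The outputs match, term for term, the recursion applied to $\KSP{\mc{INC}}{F(a_1) \otimes \cdots \otimes F(a_n)} X(b)$, using $F(b) = b$ (as $F$ is a conditional expectation onto $\mc{B}$), the right-module identity $F(a_n b) = F(a_n) b$, the $\mc{D}$-bimodule property of $\Gamma(F)$ for the scalar coefficients, and the key identity $\Exp{a_n b} = \Exp{F(a_n) b}$. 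The left-module property is then obtained by conjugating with the star: $\Gamma(F)(X(b) x) = \big(\Gamma(F)(x^\ast X(b^\ast))\big)^\ast = \big(\Gamma(F)(x^\ast) X(b^\ast)\big)^\ast = X(b)\,\Gamma(F)(x)$, using star-linearity and the right-module property already established.

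The one small lemma underpinning the computation is the identity $\Exp{a b} = \Exp{F(a) b}$ for $a \in \mc{M}$, $b \in \mc{B}$, which I would deduce at once from the right $\mc{B}$-module property $F(ab) = F(a) b$ and the hypothesis $\Exp{F(x)} = \Exp{x}$: indeed $\Exp{ab} = \Exp{F(ab)} = \Exp{F(a) b}$. I expect the main obstacle to be precisely the bookkeeping in the bimodule verification, that is, confirming that all four terms of the Wick recursion transform consistently under $\Gamma(F)$. The point that makes it work is structural rather than computational: the only operations the recursion performs on the arguments are a right multiplication $a_n \mapsto a_n b$ and the formation of the scalar $\Exp{a_n b}$, and $F$ is compatible with both precisely because it is a $\mc{B}$-bimodule map preserving $\Exp{\cdot}$.
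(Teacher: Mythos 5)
Your proof is correct, but it reaches the crucial $\Gamma(\mc{B})$-bimodule property by a genuinely different route than the paper. The paper's proof is a single global computation: it applies $\Gamma(F)$ termwise to the three-factor product formula of Proposition~\ref{Prop:free-expansions-2},
\[
\KSP{\mc{INC}}{\mb{a}} \KSP{\mc{INC}}{\mb{b}} \KSP{\mc{INC}}{\mb{c}}
= \sum_{\substack{(\pi, S) \in \mc{INC}(n+k+\ell) \\ \pi \wedge (\hat{1}_n, \hat{1}_k, \hat{1}_\ell) = \hat{0}_{n+k+\ell} \\ \Sing(\pi) \subset S}} \KSP{\mc{INC}}{\mb{a} \otimes \mb{b} \otimes \mb{c}}^{(\pi, S)}
\]
with $\mb{a} \in \mc{B}^{\otimes_{\mc{D}} n}$ and $\mb{c} \in \mc{B}^{\otimes_{\mc{D}} \ell}$: the partitions occurring are inhomogeneous (every block of size at least two mixes entries from different factors, and all singletons are open), so the bimodule property of $F$ handles the open blocks, and together with $\Exp{\cdot}$-preservation the closed blocks, letting $F$ slide off all the $\mc{B}$-entries and producing $\KSP{\mc{INC}}{\mb{a}} \Gamma(F)\left[ \KSP{\mc{INC}}{\mb{b}} \right] \KSP{\mc{INC}}{\mb{c}}$ --- the left and right module properties in one stroke. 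You instead avoid the product formula entirely: you check right multiplication by a single generator $X(b)$ using only the defining recursion (your four-term matching is accurate), extend by induction on word length, and obtain the left-module property by star-conjugation; your treatment of idempotency and $\phi_{\mc{INC}}$-preservation coincides with the paper's, which dismisses them as clear. Both arguments turn on the same three identities $F(b)=b$, $F(ab)=F(a)b$, and $\Exp{ab}=\Exp{F(a)b}$. What your route buys is economy of machinery: it needs only the recursion, not the product formula (which rests on Theorem~\ref{Thm:General-product} and Proposition~\ref{Prop:INC}). What it costs is reliance on the adjoint symmetry $\KSP{\mc{INC}}{a_1 \otimes \ldots \otimes a_n}^\ast = \KSP{\mc{INC}}{a_n^\ast \otimes \ldots \otimes a_1^\ast}$, which in the paper is Proposition~\ref{Prop:Tracial} and is stated only \emph{after} this proposition. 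There is no circularity, since that proposition is proved from the monomial and inversion expansions independently of the present statement; but be aware that this symmetry is not a formal consequence of the (right-handed) recursion you otherwise work with --- it follows from the reflection invariance of the inversion formula in Proposition~\ref{Prop:free-expansions-2} --- so it is a genuine extra ingredient of your argument. (Star-linearity of $\Gamma(F)$ is required by the definition under either approach, and the paper leaves it unremarked; the difference is that the paper's two-sided computation never needs it to get the bimodule property itself.)
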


\begin{proof}
Clearly $\Gamma(F)$ is the identity on $\Gamma(\mc{B})$. For $\mb{a} \in \mc{B}^{\otimes_{\mc{D}} n}$, $\mb{b} \in \mc{M}^{\otimes_{\mc{D}} k}$, $\mb{c} \in \mc{B}^{\otimes_{\mc{D}} \ell}$,
\[
\begin{split}
& \Gamma(F) \left[ \KSP{\mc{INC}}{\mb{a}} \KSP{\mc{INC}}{\mb{b}} \KSP{\mc{INC}}{\mb{c}} \right] \\
&\quad = \sum_{\substack{(\pi, S) \in \mc{INC}(n+k+\ell) \\ \pi \wedge (\hat{1}_n, \hat{1}_k, \hat{1}_\ell) = \hat{0}_{n+k+\ell} \\ \Sing(\pi) \subset S}} \Gamma(F) \left[ \KSP{\mc{INC}}{\mb{a} \otimes \mb{b} \otimes \mb{c}}^{(\pi, S)} \right] \\
&\quad = \sum_{\substack{(\pi, S) \in \mc{INC}(n+k+\ell) \\ \pi \wedge (\hat{1}_n, \hat{1}_k, \hat{1}_\ell) = \hat{0}_{n+k+\ell} \\ \Sing(\pi) \subset S}} \KSP{\mc{INC}}{\mb{a} \otimes \mc{F}(F)[\mb{b}] \otimes \mb{c}}^{(\pi, S)} \\
&\quad = \KSP{\mc{INC}}{\mb{a}} \Gamma(F) \left[ \KSP{\mc{INC}}{\mb{b}} \right] \KSP{\mc{INC}}{\mb{c}},
\end{split}
\]
where we have used the inhomogeneity of the partitions, the bimodule property of $F$ for open blocks, and both properties of $F$ for closed blocks. The final property is clear.
\end{proof}

\begin{Prop}
\label{Prop:Tracial}
In all six examples above,
\[
\KS{a_1 \otimes a_2 \otimes \ldots \otimes a_n}^\ast = \KS{a_n^\ast \otimes \ldots \otimes a_2^\ast \otimes a_1^\ast},
\]
where for $\phi_{\mc{IP}}$ we additionally assume that $\mc{M}$ is commutative. If $\mc{D} = \mf{C}$ and the linear functional $\Exp{\cdot}$ on $\mc{M}$ is tracial, all six linear functionals $\phi$ are tracial. If $\mc{D}$ is a unital $C^\ast$-algebra, and $\Exp{\cdot}$ is positive, the functionals $\phi$ are positive, where for $\phi_{\mc{IP}}$ and $\phi_{\mc{IPRM}}$ we additionally assume that $\mc{M}$ is commutative.
\end{Prop}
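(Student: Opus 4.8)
For all six Wick product families, the adjoint formula
$$W(a_1 \otimes \cdots \otimes a_n)^* = W(a_n^* \otimes \cdots \otimes a_1^*)$$
holds; if $\mathcal{D} = \mathbb{C}$ and $\langle\cdot\rangle$ is tracial then all six $\varphi$ are tracial; if $\langle\cdot\rangle$ is positive then the $\varphi$ are positive.

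Let me think about how to prove this.

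**The adjoint symmetry.** The cleanest route is induction on $n$ using the defining recursions. Each family is defined by a recursion expressing $W(a_1 \otimes \cdots \otimes a_{n+1})$ in terms of lower-degree Wick products with arguments obtained by merging adjacent $a_i$'s and inserting scalars $\langle a_i a_j\rangle$. The star operation reverses products and conjugates, so applying $*$ to the recursion and using $X(a)^* = X(a^*)$ plus $\langle a\rangle^* = \langle a^*\rangle$ (star-linearity of the functional) should transform the recursion for $W(\cdots \otimes a_{n+1})$ into the recursion for $W(a_{n+1}^* \otimes \cdots)$. The key compatibility is that reversal sends the "append $a_{n+1}$ on the right" recursion to an "append $a_{n+1}^*$ on the left" recursion. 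For $\mathcal{P}_{1,2}$ and $\mathcal{IPRM}$ the recursions are symmetric enough that this is immediate; for $\mathcal{IP}$ one needs $a_i a_{n+1} \mapsto a_{n+1}^* a_i^*$, which is why commutativity of $\mathcal{M}$ is assumed there (so that the merged argument $\langle a_i a_{n+1}\rangle = \langle a_{n+1} a_i\rangle$ and the merged block $a_i a_{n+1} = a_{n+1} a_i$ respect reversal). For the three non-crossing families the recursion only merges \emph{adjacent} arguments, so reversal is automatic and no commutativity is needed.

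**Traciality.** Here the plan is to apply $\varphi$ to the product formulas proved earlier (Propositions \ref{Prop:classical-expansions-1}, \ref{Prop:classical-expansions-2}, \ref{Prop:free-expansions-1}, \ref{Prop:free-expansions-2}, Theorems \ref{Thm:5th-expansion} and \ref{Thm:Free-Meixner-Wick}). Since $\varphi$ annihilates every $W$ of positive degree, $\varphi[XY]$ for products of Wick monomials reduces to a sum over those combinatorial objects $\tau$ whose "open part" is empty, i.e.\ every block is closed (contributes a scalar $\langle a_U\rangle$). I would show that the resulting moment formula
$$\varphi[X(a_1)\cdots X(a_n)] = \sum_{\tau} \prod_{U} \langle a_U \rangle$$
is invariant under cyclic rotation $a_1 \cdots a_n \mapsto a_2 \cdots a_n a_1$, which gives traciality by the standard argument that cyclic invariance of all moments implies $\varphi[PQ] = \varphi[QP]$. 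The combinatorial objects (matchings, partitions, non-crossing partitions, permutations) and their associated scalar weights are manifestly invariant under cyclic rotation of $[n]$ \emph{provided} $\langle\cdot\rangle$ is tracial, since each closed block contributes $\langle a_{i_1} \cdots a_{i_k}\rangle$ which is cyclically invariant in its own arguments. For the non-crossing cases I would use the recursive bimodule maps $\langle\cdot\rangle^\pi$, noting that over $\mathcal{D}=\mathbb{C}$ these collapse to products of traces over blocks, which are cyclically stable. The moment formula for $\mathcal{IPRM}$ is already recorded in the Remark following Theorem \ref{Thm:5th-expansion} as the normalized symmetric-group sum, which is visibly cyclically invariant.

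**Positivity.** The standard strategy is to exhibit a Fock-space (GNS-type) representation in which each $W$ becomes an honest creation-plus-lower-order operator, so that $\varphi$ is the vacuum state on a $*$-algebra of operators and hence positive. Concretely, the orthogonality of the ranges of $W$ for different $n$—which follows from the inversion formulas combined with the fact that $\varphi$ kills positive-degree Wick products—lets one compute $\varphi[W(\mathbf{a})^* W(\mathbf{b})]$ as an inner product on $\bigoplus_n \mathcal{M}^{\otimes n}$ built from $\langle\cdot\rangle$; positivity of $\langle\cdot\rangle$ (valued in a $C^*$-algebra $\mathcal{D}$) then makes this inner product positive semidefinite, and one realizes $X(a)$ as creation plus adjoint plus a $\mathcal{D}$-valued diagonal term on the associated Fock module. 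For the free-Meixner family the relevant inner product is exactly the one computed in Proposition \ref{Prop:Inner-product}, whose weights $t^{|\pi|}\gamma^{n-|\pi|}$ are nonnegative precisely in the positive-definite parameter range, so positivity is inherited from $\langle\cdot\rangle$. \textbf{The main obstacle} will be the positivity claim for $\varphi_{\mathcal{IP}}$ and $\varphi_{\mathcal{IPRM}}$: here the commutativity hypothesis on $\mathcal{M}$ is essential, and one must check that the symmetrized inner product coming from the $(|U|-1)!$ and $|V|!$ weights in the Remark after Theorem \ref{Thm:5th-expansion} is genuinely a positive kernel. I would handle this by identifying the Fock space as a symmetric (for $\mathcal{IP}$) or full-symmetric-with-cyclic (for $\mathcal{IPRM}$) tensor construction over $L^2(\mathcal{M},\langle\cdot\rangle)$, where commutativity guarantees the symmetrization is well defined and the standard Gaussian/Poisson second-quantization positivity applies.
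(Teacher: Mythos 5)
Your outline for traciality and for positivity in the $\mc{P}_{1,2}$, $\mc{IP}$, $\mc{INC}$, and free Meixner cases runs parallel to the paper's proof: apply $\phi$ to the moment/product formulas and use cyclic invariance of the closed-block weights, and reduce positivity to positive semidefiniteness of the explicit forms $\state{\KS{\mb{a}}^\ast \KS{\mb{b}}}$ on each graded piece (the paper cites Theorem~3.5.6 of \cite{SpeHab} for the $\mc{INC}$ form, and uses Proposition~\ref{Prop:Inner-product} together with the nonnegative weights $t^{\abs{\pi}}\gamma^{n-\abs{\pi}}$ for the free Meixner form). However, two of your steps have genuine gaps.

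First, your induction for the adjoint formula does not close as stated. Each $W$ is \emph{defined} only by a right-handed recursion (appending $a_{n+1}$), so after applying $\ast$ and the inductive hypothesis you obtain an expression of the form $X(a_{n+1}^\ast)\KS{a_n^\ast \otimes \cdots \otimes a_1^\ast} - (\text{mirror terms})$; to identify this with $\KS{a_{n+1}^\ast \otimes a_n^\ast \otimes \cdots \otimes a_1^\ast}$ you need $W$ to satisfy the corresponding \emph{left-handed} recursion. That mirror recursion is not part of the definition, and your phrase ``the recursion for $W(a_{n+1}^\ast \otimes \cdots)$'' presupposes it. It is true, but it needs its own proof (a separate double induction); in fact the paper obtains left multiplication by $X$ \emph{from} the adjoint symmetry (see the proof of Proposition~\ref{Prop:Inner-product}, which invokes Proposition~\ref{Prop:Tracial}), so deducing the adjoint symmetry from the left recursion without an independent argument is circular. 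The paper avoids the issue entirely: the adjoint formula is read off the inversion (monomial-expansion) formulas, because in all six cases the expansion coefficients depend only on block sizes and the indexing families are invariant under the reversal $i \mapsto n+1-i$, which also reverses the order of the $X$-factors and of the products inside each block.

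Second, your positivity argument for $\phi_{\mc{IPRM}}$ names the conclusion rather than proving it. The relevant quadratic form, $\sum_{\alpha,\beta \in \Sym(n)} \prod_{U \in \pi(\beta)} \Exp{\prod_{i \in U} (a_{\alpha(i)}^\ast b_i)}$, is \emph{not} positive in general (the paper cites \cite{Sniady-SWN} for this), so no soft second-quantization principle can apply without using commutativity in a specific computation. The paper's argument is an explicit symmetrization identity: using commutativity to replace $\alpha \circ \beta$ by $\alpha$ and inserting the symmetrizer $P_n$ rewrites the form as $\frac{1}{n!}\sum_{\pi} \prod_{U \in \pi}(\abs{U}-1)!\,\Exp{\bigl(\prod_{i \in U}(P_n\mb{a})_i\bigr)^\ast \bigl(\prod_{i \in U}(P_n\mb{a})_i\bigr)} \geq 0$, a manifestly nonnegative combination. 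Your proposed ``full-symmetric-with-cyclic tensor construction'' would have to reproduce exactly this computation to count as a proof.
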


\begin{proof}
The trace and adjoint properties follow from the moment formulas and expansions of Wick products in terms of monomials, since in all cases the coefficients in the expansions depend only on the size of the blocks. For positivity,
\[
\begin{split}
& \statep{\mc{INC}_{1,2}}{\KSP{\mc{INC}_{1,2}}{a_1 \otimes \ldots \otimes a_n}^\ast \KSP{\mc{IP}_{1,2}}{b_1 \otimes \ldots \otimes b_k}} \\
&\quad = \statep{\mc{INC}}{\KSP{\mc{INC}}{a_1 \otimes \ldots \otimes a_n}^\ast \KSP{\mc{INC}}{b_1 \otimes \ldots \otimes b_k}} \\
&\quad = \delta_{n=k} \Exp{a_n^\ast \otimes \ldots \otimes a_1^\ast \otimes b_1 \otimes \ldots \otimes b_n}^{\set{(1, 2n), (2, 2n-1), \ldots, (n, n+1)}},
\end{split}
\]
The proof of positivity of this inner product on $\mc{M}^{\otimes_{\mc{D}} n}$ (which we denote $\ip{\cdot}{\cdot}_n$) is almost verbatim the argument in Theorem 3.5.6 of \cite{SpeHab}. Also, by Proposition~\ref{Prop:Inner-product},
\begin{equation}
\label{Eq:Inner-product-Meixner}
\begin{split}
& \state{\KS{a_1 \otimes \ldots \otimes a_n}^\ast \KS{b_1 \otimes \ldots \otimes b_k}} \\
&\quad = \delta_{n=k} \sum_{\pi \in \Int(n)} t^{\abs{\pi}} \gamma^{n - \abs{\pi}} \Exp{a_n^\ast \otimes \ldots \otimes a_1^\ast \otimes b_1 \otimes \ldots \otimes b_n}^{\set{U \cup (2n+1-U) : U \in \pi}} \\
&\quad = \delta_{n=k} \sum_{\pi \in \Int(n)} t^{\abs{\pi}} \gamma^{n - \abs{\pi}} \ip{\bigotimes_{U \in \pi} a_U}{\bigotimes_{U \in \pi} b_U}_{\abs{\pi}},
\end{split}
\end{equation}
and so this inner product is also positive. For commutative $\mc{M}$,
\[
\begin{split}
& \statep{\mc{IP}_{1,2}}{\KSP{\mc{IP}_{1,2}}{a_1 \otimes \ldots \otimes a_n}^\ast \KSP{\mc{IP}_{1,2}}{b_1 \otimes \ldots \otimes b_k}} \\
&\quad = \statep{\mc{IP}}{\KSP{\mc{IP}}{a_1 \otimes \ldots \otimes a_n}^\ast \KSP{\mc{IP}}{b_1 \otimes \ldots \otimes b_k}}
= \delta_{n=k} \sum_{\alpha \in \Sym(n)} \Exp{a_{\alpha(1)}^\ast b_1} \ldots \Exp{a_{\alpha(n)}^\ast b_n},
\end{split}
\]
This inner product on $\mc{M}^{\otimes n}$ is well known to be positive semi-definite. Finally,
\[
\begin{split}
& \statep{\mc{IPRM}}{\KSP{\mc{IPRM}}{a_1 \otimes \ldots \otimes a_n}^\ast \KSP{\mc{IPRM}}{b_1 \otimes \ldots \otimes b_k}} \\
&\quad = \delta_{n=k} \sum_{\alpha, \beta \in \Sym(n)} \prod_{U \in \pi(\beta)} \Exp{\prod_{i \in U} \left( a_{\alpha(i)}^\ast b_i \right)},
\end{split}
\]
where $\pi(\beta)$ is the orbit decomposition of $\beta$ and the order in each $U \in \pi(\beta)$ is according to $\beta$ as in Theorem~\ref{Thm:5th-expansion}. As observed in Section~4 of \cite{Sniady-SWN}, this inner product is in general not positive. If $\mc{M}$ is commutative, we may re-write
\[
\begin{split}
& \statep{\mc{IPRM}}{\KSP{\mc{IPRM}}{a_1 \otimes \ldots \otimes a_n}^\ast \KSP{\mc{IPRM}}{a_1 \otimes \ldots \otimes a_k}} \\
&\quad = \delta_{n=k} \sum_{\alpha \in \Sym(n)} \sum_{\pi \in \mc{P}(n)} \prod_{U \in \pi} (\abs{U} - 1)! \Exp{\left(\prod_{i \in U} a_{\alpha(i)} \right)^\ast \left( \prod_{i \in U} a_i \right)} \\
&\quad = \delta_{n=k} \frac{1}{n!}  \sum_{\pi \in \mc{P}(n)} \sum_{\alpha, \beta \in \Sym(n)} \prod_{U \in \pi} (\abs{U} - 1)! \Exp{\left(\prod_{i \in U} a_{\alpha(i)} \right)^\ast \left( \prod_{i \in U} a_{\beta(i)} \right)} \\
&\quad = \delta_{n=k} \frac{1}{n!}  \sum_{\pi \in \mc{P}(n)} \prod_{U \in \pi} (\abs{U} - 1)! \Exp{\left(\prod_{i \in U} (P_n \mb{a})_i \right)^\ast \left( \prod_{i \in U} (P_n \mb{a})_i \right)} \geq 0,
\end{split}
\]
where in the next-to-last term we replaced $\alpha \circ \beta$ with $\alpha$, and $P_n$ is the symmetrization operator.
\end{proof}

\begin{Remark}
\label{Remark:Faithful}
For $\mc{D} = \mf{C}$, $\ip{\cdot}{\cdot}_n$ is essentially the induced inner product on a tensor product of Hilbert spaces, and so is non-degenerate if $\Exp{\cdot}$ is faithful. In general, $\ip{\cdot}{\cdot}_n$, and so $\phi_{\mc{INC}}$, is rarely faithful. For example, let $\mc{D} = \mc{M}$, $\Exp{a} = a$, and $p \in \mc{M}$ be a idempotent. Then $\ip{(1-p) \otimes p}{(1-p) \otimes p}_2 = 0$.
\end{Remark}

\begin{Notation}
For $\mb{a} \in \mc{M}^{\otimes n}$ and $(\pi, S) \in \mc{INC}(n)$, define the contraction $\mc{C}^{(\pi, S)}(\mb{a})$ by a linear extension of
\[
\mc{C}^{(\pi, S)}(a_1 \otimes \ldots \otimes a_n) = \prod_{U \in \pi \setminus S} \Exp{a_U} \bigotimes_{V \in S} a_V.
\]
Note that in all our examples with $\mc{D} = \mf{C}$,
\[
\KS{\mb{a}}^{(\pi, S)} = \KS{\mc{C}^{(\pi, S)}(\mb{a})}.
\]
\end{Notation}

\begin{Prop}
Assume $\Exp{\cdot}$ is a faithful state such that in its representation on $L^2(\mc{M}, \Exp{\cdot})$, $\mc{M}$ is represented by bounded operators. Let $\mb{a} \in \mc{M}^{\otimes n}$ and $\mb{b} \in \mc{M}^{\otimes k}$. Denote $\norm{\mb{a}}_2 = \sqrt{\ip{\mb{a}}{\mb{a}}_n}$ and $\norm{\KS{\mb{a}}}_\phi = \sqrt{\state{\KS{\mb{a}}^\ast \KS{\mb{a}}}}$. Denote
\[
Z_{n, k} = \set{(\pi, S) \in \mc{INC}(n+k): \pi \wedge (\hat{1}_n, \hat{1}_k) = \hat{0}_{n+k}, \Sing(\pi) \subset S}.
\]
For $(\pi, S) \in Z_{n, k}$, the map
\[
\mb{b} \mapsto \KSP{\mc{INC}}{\mb{a} \otimes \mb{b}}^{(\pi, S)}
\]
is bounded as a map from $L^2(\mc{M}, \Exp{\cdot})^{\otimes k}$ to $L^2(\mc{M}, \Exp{\cdot})^{\otimes \abs{S}}$, as is the map
\[
\mb{b} \mapsto \KSP{\mc{INC}}{\mb{a}} \KSP{\mc{INC}}{\mb{b}}.
\]
Therefore the definitions of $\mc{C}^{(\pi, S)}(\mb{a} \otimes \mb{b})$ and $\KSP{\mc{INC}}{\mb{a} \otimes \mb{b}}^{(\pi, S)}$ and the identity
\[
\KSP{\mc{INC}}{\mb{a}} \KSP{\mc{INC}}{\mb{b}}
= \sum_{(\pi, S) \in Z_{n,k}} \KSP{\mc{INC}}{\mc{C}^{(\pi, S)}(\mb{a} \otimes \mb{b})}
= \sum_{(\pi, S) \in Z_{n,k}} \KSP{\mc{INC}}{\mb{a} \otimes \mb{b}}^{(\pi, S)}
\]
extend to $\mb{a} \in \mc{M}^{\otimes n}$ (algebraic tensor product) and $\mb{b} \in L^2(\mc{M}, \Exp{\cdot})^{\otimes k}$ (Hilbert space tensor product). If $\Exp{\cdot}$ is tracial, we may switch $\mb{a}$ and $\mb{b}$.
\end{Prop}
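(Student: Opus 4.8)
The plan is to prove the $L^2$-extension by bounding each summand in the product formula of Proposition~\ref{Prop:free-expansions-2}, summing the finitely many terms, and then extending the algebraic identity by continuity, using throughout that $\KSP{\mc{INC}}{\cdot}$ is an isometry onto its range. Recall from the proof of Proposition~\ref{Prop:Tracial} that for $\mc{D} = \mf{C}$ one has
\[
\statep{\mc{INC}}{\KSP{\mc{INC}}{\mb{c}}^\ast \KSP{\mc{INC}}{\mb{c}'}} = \delta_{m = m'} \ip{\mb{c}}{\mb{c}'}_m,
\]
so $\KSP{\mc{INC}}{\cdot}$ is an isometry from $(\mc{M}^{\otimes m}, \norm{\cdot}_2)$ into $L^2(\Gamma(\mc{M}), \phi)$, while $\ip{\cdot}{\cdot}_m$ is exactly the Hilbert-space inner product $\prod_i \Exp{c_i^\ast c_i'}$ of $L^2(\mc{M}, \Exp{\cdot})^{\otimes m}$ coming from the rainbow pairing (Remark~\ref{Remark:Faithful}). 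Since $\KSP{\mc{INC}}{\mb{a} \otimes \mb{b}}^{(\pi, S)} = \KSP{\mc{INC}}{\mc{C}^{(\pi, S)}(\mb{a} \otimes \mb{b})}$ for $\mc{D} = \mf{C}$, the first boundedness assertion is equivalent to bounding the contraction $\mb{b} \mapsto \mc{C}^{(\pi, S)}(\mb{a} \otimes \mb{b})$ as a map $L^2(\mc{M}, \Exp{\cdot})^{\otimes k} \to L^2(\mc{M}, \Exp{\cdot})^{\otimes \abs{S}}$.

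The key structural observation is that $(\pi, S) \in Z_{n, k}$ is very constrained: the condition $\pi \wedge (\hat{1}_n, \hat{1}_k) = \hat{0}_{n+k}$ forces each block of $\pi$ to meet each of $[1, n]$ and $[n+1, n+k]$ in at most one point, so every block has size at most $2$; the pairs straddle the two halves, and by $\Sing(\pi) \subset S$ all singletons are open. On an elementary tensor $\mb{b} = b_1 \otimes \ldots \otimes b_k$ the contraction therefore factorizes over blocks into elementary operations, each $b_j$ occurring in exactly one of them: a closed straddling pair $\set{i, j}$ contributes the scalar $\Exp{a_i b_{j-n}}$, bounded by $\norm{a_i^\ast}_2 \norm{b_{j-n}}_2 \le \norm{a_i} \norm{b_{j-n}}_2$ via Cauchy--Schwarz; an open straddling pair contributes $a_i b_{j-n} \in L^2$ with $\norm{a_i b_{j-n}}_2 \le \norm{a_i} \norm{b_{j-n}}_2$ since $\mc{M}$ acts by bounded operators; and open singletons contribute either a fixed entry of $\mb{a}$ or the factor $b_{j-n}$ itself. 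As distinct $b_j$ land in distinct tensor slots, the tensor norm multiplies and
\[
\norm{\mc{C}^{(\pi, S)}(\mb{a} \otimes \mb{b})}_2 \le C(\mb{a}, \pi, S) \prod_{j=1}^k \norm{b_j}_2 = C(\mb{a}, \pi, S) \norm{\mb{b}}_2,
\]
where $C(\mb{a}, \pi, S)$ is a product of operator norms of the entries of $\mb{a}$. This bounds the first map; summing the finitely many $(\pi, S) \in Z_{n, k}$ in the product formula of Proposition~\ref{Prop:free-expansions-2} and applying the triangle inequality for $\norm{\cdot}_\phi$ then bounds $\mb{b} \mapsto \KSP{\mc{INC}}{\mb{a}} \KSP{\mc{INC}}{\mb{b}}$.

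Since $\mc{M}$ is dense in $L^2(\mc{M}, \Exp{\cdot})$ and $\norm{\cdot}_2$ is the Hilbert tensor norm, $\mc{M}^{\otimes k}$ is dense in $L^2(\mc{M}, \Exp{\cdot})^{\otimes k}$. Each of the bounded maps above extends uniquely and continuously to this completion, and the algebraic identity, valid on the dense subspace by Proposition~\ref{Prop:free-expansions-2}, passes to the limit. This gives the stated definitions of $\mc{C}^{(\pi, S)}(\mb{a} \otimes \mb{b})$ and $\KSP{\mc{INC}}{\mb{a} \otimes \mb{b}}^{(\pi, S)}$ and the product identity for $\mb{a} \in \mc{M}^{\otimes n}$ and $\mb{b} \in L^2(\mc{M}, \Exp{\cdot})^{\otimes k}$.

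Finally, if $\Exp{\cdot}$ is tracial then so is $\phi$, and by Proposition~\ref{Prop:Tracial} there is a reversed-adjoint tensor, which for $\mb{a}$ we denote $\tilde{\mb{a}}$, characterized by $\KSP{\mc{INC}}{\mb{a}}^\ast = \KSP{\mc{INC}}{\tilde{\mb{a}}}$ and satisfying $\norm{\tilde{\mb{a}}}_2 = \norm{\mb{a}}_2$. Hence, for fixed algebraic $\mb{b}$ and variable $\mb{a}$,
\[
\norm{\KSP{\mc{INC}}{\mb{a}} \KSP{\mc{INC}}{\mb{b}}}_\phi = \norm{\KSP{\mc{INC}}{\tilde{\mb{b}}} \KSP{\mc{INC}}{\tilde{\mb{a}}}}_\phi \le C(\tilde{\mb{b}}) \norm{\tilde{\mb{a}}}_2 = C(\tilde{\mb{b}}) \norm{\mb{a}}_2,
\]
where the inequality is the bound already established, now with the \emph{fixed} algebraic factor $\tilde{\mb{b}}$ and variable $\tilde{\mb{a}}$. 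Thus $\mb{a} \mapsto \KSP{\mc{INC}}{\mb{a}} \KSP{\mc{INC}}{\mb{b}}$ is bounded and, by the density argument of the previous paragraph, extends in $\mb{a}$, so the roles of $\mb{a}$ and $\mb{b}$ may be switched. The only genuine work is the contraction estimate of the second paragraph, i.e.\ verifying that the pairing and the multiplication attached to each block are individually $L^2$-bounded; the structural collapse of $Z_{n,k}$ to blocks of size at most two is what makes this routine, and everything else is isometry, finiteness, continuity, and adjoint symmetry.
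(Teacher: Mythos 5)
Your reduction to the contraction map via the isometry, your structural description of $Z_{n,k}$ (every block is an outer singleton or a pair straddling the two halves, nested around the junction), and your trace-reversal argument for switching $\mb{a}$ and $\mb{b}$ all agree in substance with the paper. But there is a genuine gap at the one step that carries all the analytic content: you prove the estimate $\norm{\mc{C}^{(\pi,S)}(\mb{a}\otimes\mb{b})}_2 \le C(\mb{a},\pi,S)\,\norm{\mb{b}}_2$ only for \emph{elementary} tensors $\mb{b}=b_1\otimes\cdots\otimes b_k$, and then conclude that "this bounds the first map" and later extend "the bounded maps" by continuity. Boundedness on elementary tensors does not imply boundedness on their span with respect to the Hilbert tensor norm, and that implication is exactly what is at stake. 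Concretely, the pairing $b_1\otimes b_2\mapsto \Exp{b_1b_2}$ satisfies the same kind of elementary-tensor bound, $\abs{\Exp{b_1b_2}}\le\norm{b_1^\ast}_2\norm{b_2}_2$, yet for $\Exp{\cdot}$ tracial and $\set{e_i}_{i=1}^N$ orthonormal it sends $\sum_{i=1}^N e_i^\ast\otimes e_i$, a vector of norm $\sqrt{N}$, to the number $N$, so it is unbounded on the algebraic tensor product. Nothing in your block-by-block estimate distinguishes the blocks occurring in $Z_{n,k}$ from such a pairing, so the argument as written proves too much. This is precisely why the paper's proof takes a general $\mb{b}=\sum_i b_{i1}\otimes\cdots\otimes b_{ik}$, expands $\norm{\mc{C}^{(\pi_\ell,S_\ell')}(\mb{a}\otimes\mb{b})}_2^2$ as a double sum over $i,j$, peels off the open pair via $\norm{\mc{C}^{(\pi_\ell,S_\ell')}(\mb{a}\otimes\mb{b})}_2^2\le\norm{a_{n-\ell+1}}^2\norm{\mc{C}^{(\pi_{\ell-1},S_{\ell-1})}(\mb{a}\otimes\mb{b})}_2^2$, and disposes of the fully closed contractions by citing the purely Hilbert-space estimate $\norm{\mc{C}^{(\pi_\ell,S_\ell)}(\mb{a}\otimes\mb{b})}_2\le\norm{\mb{a}}_2\norm{\mb{b}}_2$ (Proposition 5.3.3 of Biane--Speicher).

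Your approach can be repaired, and the repair uses exactly the feature of $Z_{n,k}$ that rules out the counterexample above: since $\pi\wedge(\hat{1}_n,\hat{1}_k)=\hat{0}_{n+k}$, each block meets $[n+1,n+k]$ in at most one point, so each slot of $\mb{b}$ is acted on \emph{linearly} by an operation built from fixed entries of $\mb{a}$. For fixed $\mb{a}$, the contraction $\mc{C}^{(\pi_\ell,S_\ell')}(\mb{a}\otimes\cdot)$ is the tensor product of the bounded functionals $b\mapsto\Exp{a_{n-r+1}b}$ (norm $\norm{a_{n-r+1}^\ast}_2$) on the slots $r<\ell$, the bounded left-multiplication $b\mapsto a_{n-\ell+1}b$ (norm at most the operator norm of $a_{n-\ell+1}$) on slot $\ell$, and identities on the remaining slots, followed by tensoring with the fixed vector $a_1\otimes\cdots\otimes a_{n-\ell}$. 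A tensor product of bounded Hilbert-space operators is bounded on the Hilbert-space tensor product with norm the product of the norms; that fact---and not the multiplicativity of the norm on elementary tensors---is what your phrase "the tensor norm multiplies" needs to mean for the proof to close. With that substitution, your block-by-block argument becomes a correct, and arguably cleaner, alternative to the paper's double-sum estimate; the rest of your proof (the isometry, summing over the finitely many $(\pi,S)\in Z_{n,k}$, density, and the tracial switch) then goes through.
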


\begin{proof}
Since
\[
\norm{\KSP{\mc{INC}}{\mb{a} \otimes \mb{b}}^{(\pi, S)}}_{\phi_{\mc{INC}}}
= \norm{\mc{C}^{(\pi, S)}(\mb{a} \otimes \mb{b})}_2,
\]
it suffices to consider the map $\mb{b} \mapsto \mc{C}^{(\pi, S)}(\mb{a} \otimes \mb{b})$. Denote
\[
\pi_\ell = \set{(i): 1 \leq i \leq n - \ell, n + \ell + 1 \leq i \leq n + k; (n - j + 1, n + j) : 1 \leq j \leq \ell},
\]
and
\[
S_\ell = \set{(i): 1 \leq i \leq n - \ell, n + \ell + 1 \leq i \leq n + k}, \quad S_\ell' = S_\ell \cup \set{(n - \ell + 1, n + \ell)}.
\]
Note $\abs{S_\ell} = n + k - 2 \ell$, $\abs{S_\ell'} = n + k - 2 \ell + 1$, and $\abs{Z_{n,k}} = 2 \min(n,k)$. Then
\[
Z_{n, k} = \set{(\pi_\ell, S_\ell), (\pi_\ell, S_\ell') : 0 \leq \ell \leq \min(n-k)}.
\]
For $\mb{a} = a_1 \otimes \ldots \otimes a_n$ and $\mb{b} = \sum_i b_{i1} \otimes \ldots \otimes b_{ik}$,
\[
\begin{split}
& \norm{\mc{C}^{(\pi_\ell, S_\ell')}(\mb{a} \otimes \mb{b})}_2^2
= \norm{\sum_i \prod_{r = 1}^{\ell - 1} \Exp{a_{n-r+1} b_{i, r}} a_1 \otimes \ldots \otimes a_{n - \ell} \otimes a_{n - \ell + 1} b_{i \ell} \otimes \ldots \otimes b_{ik}}_2^2 \\
& = \sum_{i,j} \prod_{r = 1}^{\ell - 1} \overline{\Exp{a_{n-r+1} b_{i, r}}} \prod_{s = 1}^{\ell - 1} \Exp{a_{n-s+1} b_{j, s}} \Exp{a_1^\ast a_1} \ldots \Exp{a_{n-\ell}^\ast a_{n - \ell}} \Exp{b_{i \ell}^\ast a_{n - \ell + 1}^\ast a_{n - \ell + 1} b_{j \ell}} \ldots \Exp{b_{ik}^\ast b_{jk}} \\
&\leq \norm{a_{n - \ell + 1}}^2 \norm{\mc{C}^{(\pi_{\ell - 1}, S_{\ell - 1})}(\mb{a} \otimes \mb{b})}_2^2
\end{split}
\]
Also, the statement
\[
\norm{\mc{C}^{(\pi_\ell, S_\ell)}(\mb{a} \otimes \mb{b})}_2
\leq \norm{\mb{a}}_2 \norm{\mb{b}}_2
\]
is about Hilbert spaces and not algebras, and as such is well known, see for example Proposition~5.3.3 in \cite{BiaSpeBrownian} (one may identify the Hilbert space with the space of square-integrable functions on a measure space, and apply coordinate-wise Cauchy-Schwarz inequality). The boundedness of the first map follows.

Next, note that $\KSP{\mc{INC}}{\mc{C}^{(\pi, S)}(\mb{a} \otimes \mb{b})}$ are orthogonal for different $\abs{S}$. Thus
\[
\begin{split}
& \norm{\KSP{\mc{INC}}{\mb{a}} \KSP{\mc{INC}}{\mb{b}}}_{\phi_{\mc{INC}}}^2
= \sum_{(\pi, S) \in Z_{n,k} } \norm{\KSP{\mc{INC}}{\mc{C}^{(\pi, S)}(\mb{a} \otimes \mb{b})}}_{\phi_{\mc{INC}}}^2.
\end{split}
\]
The results follow.
\end{proof}

\begin{Example}
Let $f(x,y) = \chf{[0,1]}(y) \chf{[0, y^{-1/4}]}(x)$ and $g(y) = y^{-1/4}$. Then $f \in L^1 \cap L^\infty(\mf{R}^2)$ (and so in $L^2(\mf{R}^2)$) and $g \in L^2(\mf{R})$, but
\[
\mc{C}^{(\set{(1), (2,3)}, \set{(1), (2,3)})} (f \otimes g)(x,y) = f(x,y) g(y)
\]
is not in $L^2(\mf{R}^2)$. Cf. Remark~3.3 in \cite{Bourguin-Peccati-Semicircular-free-Poisson}.
\end{Example}

\begin{Remark}
\label{Remark:Ito-isometry}
In stochastic analysis, see for example \cite{Peccati-Taqqu-book} or \cite{BiaSpeBrownian}, it is usual to prove product formulas
\begin{equation}
\label{Eq:Product}
\KS{a_1 \otimes \ldots \otimes a_n} \KS{b_1 \otimes \ldots \otimes b_k} = \sum W
\end{equation}
for all $a_i$'s, and separately all $b_j$'s, orthogonal to each other. One can then conclude using the It\^{o} isometry that the same formula holds for general $a_i, b_j$. Some, but not all, of the ingredients of this approach generalize to the Wick product setting.
\begin{itemize}
\item
In the case of $W_{\mc{INC}}$, $W$, and $W_q$, we have isometries between $\Gamma(\mc{M})$ and $\bigoplus_{n=0}^\infty \mc{M}^{\otimes n}$ with, respectively, the usual inner product induced by $\Exp{\cdot}$, the inner product \eqref{Eq:Inner-product-Meixner}, and the appropriate $q$-inner product (equation~4.73 in \cite{AnsAppell}). So in all these cases, one may extend the definition of $W$ to the appropriate closure, which however are different in all three cases.
\item
Instead of starting with general simple tensors, we could have started with the analog of functions supported away from diagonals. As noted in Lemma~\ref{Lemma:Density} below, in the infinite-dimensional setting such elements are still dense with respect to the usual inner product. However they are clearly not dense for the inner product \eqref{Eq:Inner-product-Meixner}.  For example, in the natural commutative setting of $(\mc{M}, \Exp{\cdot}) = ((L^1 \cap L^\infty(\mf{R}), \,dx)$, the inner product on functions of $n$ arguments is
\[
\sum_{\pi \in \Int(n)} \overline{f(\mb{x})} g(\mb{x}) \,d\mu_\pi(\mb{x}),
\]
where $\mu_\pi$ is a multiple of the $\abs{\pi}$-dimensional Lebesgue measure on the diagonal set
\[
\set{\mb{x} \in \mf{R}^n: x_i = x_j \Leftrightarrow i \stackrel{\pi}{\sim} j}.
\]
This is the reason why the formulas in Theorem~\ref{Thm:Free-Meixner-Wick} take a considerably simpler form if the arguments have orthogonal components.
\item
Finally, to extend the product relation \eqref{Eq:Product}, we need the product map to be continuous, at least when one of the arguments is in the algebraic tensor product and the other is bounded in two-norm. If the state $\phi$ is not tracial, this need not be the case. Since $\phi_q$ is not tracial, it is natural to expect a counterexample in Section~\ref{Sec:Counterexample}.
\end{itemize}
\end{Remark}

It is well-known that for non-atomic measures, functions supported away from diagonals are dense in the product space of all square integrable functions. The next lemma (applied to $L^2(\mc{M}, \Exp{\cdot})$. shows that this results remains true for non-commutative algebras, in fact with no assumptions on the state other than faithfulness. The result is surely known, but we could not find it in the literature.

\begin{Lemma}
\label{Lemma:Density}
Let $H$ be a Hilbert space. In the Hilbert space tensor product $H \otimes H$, consider the span $S$ of tensors of the form $f \otimes g$ with $\ip{f}{g} = 0$. This span is dense if and only if $H$ is infinite dimensional.
\end{Lemma}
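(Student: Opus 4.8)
The plan is to prove both directions of the equivalence. The easy direction is that if $H$ is finite dimensional, then $S$ cannot be dense. For this I would produce a single nonzero vector in $H \otimes H$ orthogonal to every simple tensor $f \otimes g$ with $\ip{f}{g} = 0$. The natural candidate is the ``trace tensor'' $\omega = \sum_i e_i \otimes e_i$ for an orthonormal basis $\set{e_i}$ of $H$, which exists precisely because $\dim H < \infty$. Computing $\ip{f \otimes g}{\omega}$, one finds it equals $\sum_i \overline{\ip{f}{e_i}}\, \overline{\ip{g}{e_i}}$; I would rewrite this as $\ip{g}{\bar{f}}$ (or, more carefully, as the pairing of $g$ against the conjugate of $f$ in the chosen basis) and observe that the orthogonality condition $\ip{f}{g} = 0$ does not force this quantity to vanish. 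In fact it is cleaner to note that $\ip{f \otimes g}{\omega} = \ip{g}{\bar f}$, and the map $(f,g) \mapsto \ip{f}{g}$ versus $(f,g) \mapsto \ip{g}{\bar f}$ are genuinely different bilinear pairings, so one can pick $f, g$ orthogonal yet with $\ip{g}{\bar f} \neq 0$; hence $\omega \perp S$ and $S$ is not dense. A subtlety here is the interplay between the conjugate-linear and linear slots, so I would fix a conjugation on $H$ at the outset and track it carefully.

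For the main direction, suppose $\dim H = \infty$; I want to show $S^\perp = 0$. Let $T \in H \otimes H$ be orthogonal to every $f \otimes g$ with $\ip{f}{g}=0$. Identifying $H \otimes H$ with Hilbert--Schmidt operators on $H$ (via $f \otimes g \leftrightarrow$ the rank-one operator sending $h \mapsto \ip{h}{\bar g} f$, or a similar convention fixed once and for all), the condition $\ip{T}{f \otimes g} = 0$ becomes a statement that a certain Hilbert--Schmidt operator $A$ associated to $T$ satisfies $\ip{Af}{f} = 0$-type relations whenever the arguments are orthogonal. Concretely, the orthogonality of $T$ to all decomposable orthogonal tensors says that the associated sesquilinear form vanishes on all orthogonal pairs.

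The heart of the argument is then the following linear-algebra fact: a Hilbert--Schmidt (in particular bounded) operator $A$ on $H$ whose associated quadratic form $\ip{Af}{g}$ vanishes whenever $f \perp g$ must be a scalar multiple of the identity. The standard proof uses polarization on orthogonal pairs: for orthonormal $e_i, e_j$ with $i \neq j$ one gets $\ip{A e_i}{e_j} = 0$ immediately, so $A$ is diagonal in every orthonormal basis; and testing on $e_i \pm e_j$ (which are not orthogonal, so one instead tests $e_i + e_j$ against $e_i - e_j$, an orthogonal pair) forces all diagonal entries $\ip{A e_i}{e_i}$ to coincide. Hence $A = \lambda I$. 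The key step where infinite-dimensionality enters is the final one: a nonzero scalar multiple of the identity is Hilbert--Schmidt if and only if $H$ is finite dimensional, so when $\dim H = \infty$ we are forced to have $\lambda = 0$, whence $A = 0$ and $T = 0$. This shows $S^\perp = 0$, i.e. $S$ is dense. I expect the main obstacle to be bookkeeping: pinning down the exact identification of $H \otimes H$ with an operator space (and the placement of complex conjugates) so that ``$T \perp$ all orthogonal simple tensors'' translates precisely into ``the form of $A$ vanishes on orthogonal pairs,'' after which the scalar-multiple-of-identity argument and the Hilbert--Schmidt obstruction finish the proof cleanly.
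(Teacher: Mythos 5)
Your infinite-dimensional direction is sound in substance, and it is essentially the ``direct argument in terms of Hilbert--Schmidt operators'' that the paper's proof mentions parenthetically and leaves to the reader (the paper's primary route is different: it invokes $H \cong L^2([0,1],dx)$ and quotes the classical density of functions supported away from the diagonal). The polarization step is fine: testing the form on the orthogonal pair $e_i+e_j$, $e_i-e_j$ equates the diagonal entries, so the operator is $\lambda I$, and $\lambda I$ being Hilbert--Schmidt kills $\lambda$ when $\dim H = \infty$. The genuine gap is in your finite-dimensional direction, and the conjugation issue you defer as ``bookkeeping'' is in fact the whole content there. As written, your argument is a non sequitur: you correctly compute that $\ip{f \otimes g}{\omega}$ is the \emph{bilinear} pairing (of $g$ against $\bar f$), correctly observe that $\ip{f}{g}=0$ does not make it vanish --- and then conclude ``hence $\omega \perp S$,'' which is the opposite of what you just established. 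The existence of orthogonal $f,g$ with $\ip{f\otimes g}{\omega} \neq 0$ says precisely that $\omega \notin S^{\perp}$.

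Moreover, no cleverer choice of $\omega$ can rescue this within your stated setup. If both slots of $H \otimes H$ are complex-linear and no conjugation is inserted, the span of orthogonal elementary tensors is \emph{everything} even in finite dimensions: in $\comp^2 \otimes \comp^2$, the orthogonal pairs $e_1 \otimes e_2$, $e_2 \otimes e_1$, $(e_1+e_2)\otimes(-e_1+e_2)$, $(e_1+ie_2)\otimes(ie_1+e_2)$ already span, since modulo the first two the last two equal $e_2\otimes e_2 - e_1\otimes e_1$ and $i(e_1\otimes e_1 + e_2\otimes e_2)$. The underlying reason is that $f \otimes g \mapsto \ip{f}{g}$ is conjugate-linear in $g$, hence is not a linear functional on $H \otimes H$ and is not represented by any vector $\omega$. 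What makes the statement true --- and what the paper's proof does --- is to identify $H \otimes H$ with $\mathrm{HS}(H)$ via $f \otimes g \mapsto f\ip{\cdot}{g}$, a map that is conjugate-linear in the second slot (so one is really working with $H \otimes \bar H$). Under that identification $\tr\bigl(f\ip{\cdot}{g}\bigr) = \ip{f}{g}$, so the trace --- equivalently, in finite dimensions, pairing against the identity operator, which is your $\omega$ --- is a continuous linear functional annihilating $S$, and non-density follows. The same convention is needed to make your second direction's translation valid: with the identification you actually wrote, $f \otimes g \leftrightarrow (h \mapsto \ip{h}{\bar g} f)$, which is linear in $g$, the condition $T \perp S$ does not become ``the form of $A$ vanishes on orthogonal pairs'' but the twisted condition that $A^{*}f$ is proportional to $\bar f$ for every $f$, which forces $A=0$ in every dimension --- consistent with the finite-dimensional failure above, and not what you want. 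So fix the paper's conjugate-linear identification once at the outset; then your identity-operator obstruction and your scalar-operator argument both go through and together give a complete proof.
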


\begin{proof}
Choose an orthonormal basis $\set{e_i}$ for $H$. Identify $H \otimes H$ with the space of Hilbert-Schmidt operators $\mathrm{HS}(H)$,
\[
f \otimes g \mapsto f \ip{\cdot}{g}.
\]
Then for any $f, g$, we have
\[
\tr(f \otimes g) = \sum_i \ip{f}{e_i} \ip{e_i}{g} = \ip{f}{g}.
\]
In particular, if $\ip{f}{g} = 0$, $\tr(f \otimes g) = 0$. So if $\dim H < \infty$, all operators in $S$ have trace zero, and so $S$ is not dense.

Now suppose that $\dim H = \infty$. Then $H$ is isomorphic to $L^2([0,1], dx)$, in which case the result is well-known (it is also not hard to give a direct argument in terms of Hilbert-Schmidt operators; it is left to the interested reader).
\end{proof}

\appendix

\section{Combinatorial corollaries}
\label{Sec:Combinatorial}

\begin{Prop}
All five examples of incomplete posets in Section~\ref{Sec:Posets} are graded by the number of open blocks. In addition:
\begin{enumerate}
\item
The number of incomplete partitions (analog of Bell numbers) is
\[
\abs{\mc{IP}(n)} = \sum_{i=0}^n \binom{n}{i} B_i B_{n-i},
\]
sequence A001861 in \cite{OEIS}. The incomplete Stirling numbers of the second kind are
\[
S_{n,k,\ell} = \abs{\set{(\pi, S) \in \mc{IP}(n): \abs{\pi \setminus S} = k, \abs{S} = \ell}} = \binom{k + \ell}{\ell} S_{n, k + \ell},
\]
and the number of elements of rank $\ell$ is $\sum_k \binom{k + \ell}{\ell} S_{n, k + \ell}$, sequence A049020.
\item
The number of incomplete non-crossing partitions (analog of Catalan numbers) is
\[
\abs{\mc{INC}(n)} = \binom{2n}{n},
\]
the central binomial coefficients, sequence A000984. Define the incomplete Narayana numbers
\[
N_{n,k,\ell} = \abs{\set{(\pi, S) \in \mc{INC}(n): \abs{\pi \setminus S} = k, \abs{S} = \ell}}.
\]
Then denoting
\[
F(t,x,z) = \sum_{n=0}^\infty \sum_{k=0}^\infty \sum_{\ell = 0}^\infty N_{n,k,\ell} t^k x^\ell z^n
\]
their generating function and
\[
\tilde{F}(t,z) = F(t, 0, z) = \frac{1 + z (t-1) - \sqrt{1 - 2 z (t + 1) + z^2 (t - 1)^1}}{2 t z}
\]
the generating function of the regular Narayana numbers,
\[
F(t,x,z) = \frac{1 - z \tilde{F}(t,z)}{1 - z (t + x + \tilde{F}(t,z))}.
\]
The rank generating function is
\[
F(1, x, z) = \frac{1 + \sqrt{1 - 4 z}}{1 - 2 z (x + 1) + \sqrt{1 - 4 z}},
\]
and the number of elements of rank $\ell$ is $\frac{2 \ell + 1}{n +  \ell + 1} \binom{2n}{n - \ell}$, sequence A039599.
\item
The number of partial permutations is $\abs{\mc{IPRM}(n)} = \sum_{\ell = 0}^n \binom{n}{\ell}^2 \ell!$, sequence A002720. The incomplete Stirling numbers of the first kind
\[
s_{n,k,\ell} = \abs{\set{(\pi, S) \in \mc{IPRM}(n): \abs{\pi \setminus S} = k, \abs{S} = \ell}}
\]
have the generating function
\[
\sum_{k=0}^n s_{n,k,\ell} t^k = \binom{n}{\ell} (t+\ell) \ldots (t+n-1),
\]
and the number of elements of rank $\ell$ is $\binom{n}{\ell}^2 \ell!$.
\end{enumerate}
\end{Prop}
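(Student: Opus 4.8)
The plan is to treat the grading statement and the three enumerative items (a)--(c) separately, since each rests on a different decomposition, and to flag item (b) as the one delicate step.

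\emph{Grading.} For each poset I would first pin down the covering relations by starting from the minimal element $(\hat 0,\hat 0)$ (all blocks open singletons) and observing that a cover is always a single minimal move: either merge two open blocks into one open block, or close one open block. For $\mc{IP}$, $\mc{INC}$ and $\mc{IPRM}$ each such move lowers the number of open blocks by exactly $1$ (in the language of Proposition~\ref{Prop:IPRM} these moves are ``link two paths'' and ``close a path into a cycle'', i.e.\ adjoin one element to the domain $\Lambda$), while in the two matching posets the only move merges two open singletons into a pair and so lowers it by exactly $2$. In every case the number of open blocks is therefore constant on the fibres of the would-be rank function and changes by a fixed amount along covers, whence the poset is graded with the number of open blocks an affine function of the rank; I would then check that all maximal chains have the predicted length.

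\emph{(a) Incomplete partitions.} Everything here is a direct count on $\mc{IP}(n)=\{(\pi,S):\pi\in\mc{P}(n),\ S\subset\pi\}$. For the total I would split $[n]$ according to the open support $\bigcup S$: choosing its $i$ elements in $\binom{n}{i}$ ways, the open blocks form an arbitrary partition of it ($B_i$ choices) and the closed blocks an arbitrary partition of the complement ($B_{n-i}$), giving $\sum_i\binom{n}{i}B_iB_{n-i}$. For $S_{n,k,\ell}$ I would first choose an unrestricted partition of $[n]$ into $k+\ell$ blocks ($S_{n,k+\ell}$ ways) and then designate which $\ell$ of them are open ($\binom{k+\ell}{\ell}$ ways, with no further constraint); summing over $k$ gives the rank-$\ell$ count.

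\emph{(b) Incomplete non-crossing partitions (the hard part).} The core is a functional equation obtained from the first-outer-block decomposition of $(\pi,S)\in\mc{INC}(n)$. A nonempty such object is determined by the outer block $V_1\ni 1$ of size $s\ge 1$ (contributing $z^s$), which may be open or closed (weight $x+t$), together with the $s-1$ internal gaps of $V_1$ and the portion of $[n]$ to the right of $\max V_1$, the latter again an element of $\mc{INC}$. The essential structural point I must argue carefully is that every block nested inside $V_1$ is inner and hence necessarily closed, so each internal gap carries an \emph{ordinary} non-crossing partition counted by $\tilde F(t,z)=F(t,0,z)$; this is precisely why the recursion closes with the Narayana series. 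I would then write
\[
F = 1 + (x+t)\,\frac{z}{1-z\tilde F}\,F ,
\]
solve to get $F(t,x,z)=\dfrac{1-z\tilde F}{1-z(t+x+\tilde F)}$ (where $\tilde F$ satisfies $z\tilde F^2-(1-z(t-1))\tilde F+1=0$), and specialize: at $t=x=1$ the series $\tilde F$ collapses to the Catalan series $C(z)$ and, using $zC^2=C-1$, one finds $F(1,1,z)=(1-4z)^{-1/2}=\sum_n\binom{2n}{n}z^n$; at $t=1$ one obtains the stated rank series $F(1,x,z)$, from which $\frac{2\ell+1}{n+\ell+1}\binom{2n}{n-\ell}$ follows by routine coefficient extraction (or by citing the known ballot-number identity for A039599). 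As a check, $\abs{\mc{INC}(n)}=\binom{2n}{n}$ can also be read off from the poset isomorphism $\mc{INC}(n)\simeq\mc{INC}_{1,2}(2n)$ of Proposition~\ref{Prop:INC}.

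\emph{(c) Incomplete permutations.} Using the description preceding Proposition~\ref{Prop:IPRM}, an element of $\mc{IPRM}(n)$ is a partial injection of $[n]$, i.e.\ a disjoint union of directed paths (the open blocks) and directed cycles (the closed blocks) covering $[n]$, so the exponential formula applies. A single directed cycle on $m$ vertices has exponential generating function $\sum_{m\ge1}\frac{(m-1)!}{m!}x^m=-\log(1-x)$ and a single directed path has $\sum_{m\ge1}\frac{m!}{m!}x^m=\frac{x}{1-x}$; hence the structures with exactly $\ell$ paths and with cycles marked by $t$ have EGF $(1-x)^{-t}\frac{1}{\ell!}\bigl(\frac{x}{1-x}\bigr)^{\ell}$. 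Extracting $n![x^n]$ and using $[x^{n-\ell}](1-x)^{-(\ell+t)}=\binom{n+t-1}{n-\ell}$ gives
\[
\sum_{k}s_{n,k,\ell}\,t^k=\frac{n!}{\ell!}\binom{n+t-1}{n-\ell}=\binom{n}{\ell}\,(t+\ell)(t+\ell+1)\cdots(t+n-1),
\]
which is the claimed generating function. Setting $t=1$ yields the rank-$\ell$ count $\binom{n}{\ell}^2\ell!$, and summing over $\ell$ gives $\abs{\mc{IPRM}(n)}=\sum_{\ell}\binom{n}{\ell}^2\ell!$ (equivalently, count directly by choosing $\Lambda$ and an injection $f:\Lambda\to[n]$). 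The only genuinely nontrivial step in the whole proposition is the decomposition in (b); items (a), (c) and all the totals are routine once the right bijective picture is in place.
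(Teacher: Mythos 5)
Your proposal is correct, but it reaches each item by a genuinely different route from the paper's proof. In (a) you count $\abs{\mc{IP}(n)}$ directly by splitting $[n]$ into the open support and its complement; the paper instead treats the formula for $S_{n,k,\ell}$ as the obvious statement and derives the total from it via the Stirling--binomial convolution identity $\sum_{k,\ell}\binom{k+\ell}{\ell}S_{n,k+\ell}=\sum_i\binom{n}{i}B_iB_{n-i}$ (quoting Aigner), so your version is the more self-contained of the two. In (b) you decompose by the first outer block, arriving at $F=1+(x+t)\frac{z}{1-z\tilde F}F$; the paper decomposes by the last element $n+1$ (closed singleton, open singleton, or attached to the block containing the maximum of an initial segment, with a fully closed non-crossing partition filling the space in between), i.e.\ the recursion $N_{n+1,k,\ell}=N_{n,k-1,\ell}+N_{n,k,\ell-1}+\sum_{i,j}N_{i,j,\ell}N_{n-i,k-j,0}$, giving $F=1+ztF+zxF+z(F-1)\tilde F$. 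The two functional equations are different but solve to the same $F$, and the structural point you isolate (blocks nested inside an outer block are inner, hence closed, since $S\subset\Outer(\pi)$) is exactly what makes either recursion close. For the rank count the paper avoids the generating function entirely: it uses the isomorphism $\mc{INC}(n)\simeq\mc{INC}_{1,2}(2n)$ of Proposition~\ref{Prop:INC} and a reflection-principle lattice-path count, obtaining $\binom{2n}{n+\ell}-\binom{2n}{n+\ell+1}$; your coefficient extraction also works, since the coefficient of $x^\ell$ in $F(1,x,z)$ equals $z^\ell C(z)^{2\ell+1}$, the ballot-number series, but that identity is the one step you should write out rather than call routine. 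In (c) you invoke the exponential formula with cycle and path EGFs $-\log(1-x)$ and $x/(1-x)$; the paper instead uses the insertion recursion $s_{n+1,k,\ell}=s_{n,k-1,\ell}+s_{n,k,\ell-1}+(n+\ell)s_{n,k,\ell}$. Incidentally, your quadratic $z\tilde F^2-(1-z(t-1))\tilde F+1=0$ for the Narayana series is the correct one; the closed form displayed in the statement has a typo.

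Two bookkeeping points, neither fatal. First, in (c), setting $t=1$ in $\sum_k s_{n,k,\ell}t^k$ counts elements with $\ell$ \emph{open blocks}, which is $\binom{n}{\ell}^2(n-\ell)!$, not $\binom{n}{\ell}^2\ell!$; the latter is the number of elements at poset rank $\ell$, equivalently with $n-\ell$ open blocks, so you need the substitution $\ell\mapsto n-\ell$ before comparing with the stated formula. (The statement itself is inconsistent on this point: items (a) and (b) index rank by $\abs{S}$, item (c) by $n-\abs{S}$.) Second, in the grading argument for $\mc{INC}$ (and likewise $\mc{INC}_{1,2}$) you should note that merging two open blocks stays inside the poset only when no open block lies between them, since otherwise that block would become inner but open; this does not affect the conclusion, because every cover is still either an admissible merge or a closure, and so changes the number of open blocks by exactly one (respectively two).
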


\begin{proof}
The formula for the incomplete Stirling numbers of the second kind is obvious. Then using for example the solved Exercise~1.32 in \cite{Aigner-Course-Enumeration},
\[
\abs{\mc{IP}(n)} = \sum_{k, \ell = 0}^n \binom{k + \ell}{\ell} S_{n, k + \ell}
= \sum_{k, \ell = 0}^n \sum_{i = \ell}^{n-k} \binom{n}{i} S_{i, \ell} S_{n-i, k}
=\sum_{i=0}^n \binom{n}{i} B_i B_{n-i}.
\]
The incomplete Narayana numbers satisfy the recursion relation
\[
N_{n+1,k,\ell} = N_{n,k-1,\ell} + N_{n,k,\ell-1} + \sum_{i=1}^n \sum_{j=0}^k N_{i, j, \ell} N_{n-i,k-j,0}.
\]
It follows that
\[
F(t,x,z) = 1 + z t F(t,x,z) + z x F(t,x,z) + z (F(t,x,z) - 1) \tilde{F}(t,z).
\]
For $t=1$ this relation is easily solved to give the rank generating function, while setting additionally $x=1$, we see that the generating function for $\abs{\mc{INC}(n)}$ is
\[
F(1,1,z) = \frac{1}{\sqrt{1 - 4 z}}.
\]
Using the bijection from Proposition~\ref{Prop:INC},
\[
\abs{\set{(\pi, S) \in \mc{INC}(n): \abs{S} = \ell}} = \abs{\set{\pi \in \mc{INC}_{1,2}(2n): \abs{\Sing(\pi)} = 2 \ell}}.
\]
The latter number is clearly the same as the number of lattice paths with W and N steps which go from $(0,0)$ to $(n + \ell, n - \ell)$ and do not cross the main diagonal. Using the reflection principle, this number is $\binom{2n}{n + \ell} - \binom{2n}{n + \ell + 1}$.

The formula for $\abs{\mc{IPRM}(n)}$ is obvious. The formula for the incomplete Stirling numbers of the first kind follows from the recursion relation
\[
s_{n+1,k,\ell} = s_{n,k-1,\ell} + s_{n,k,\ell-1} + (n + \ell) s_{n,k,\ell},
\]
obtained in the usual way by adjoining $n+1$ to an incomplete permutation of $n$; note that in a closed work of length $u$, $n+1$ can be inserted in $u$ places, while in an open word it can be inserted in $u+1$ spaces.
\end{proof}

\begin{Remark}
For completeness, we include combinatorial corollaries of Proposition~\ref{Prop:classical-expansions-2} and Theorem~\ref{Thm:5th-expansion}. Take $a$ to a projection, so that $a^2 = a$ and $\Exp{a} = t$. Denote $X(a) = x$. Then
\[
x \KSP{\mc{IP}_{1,2}}{a^{\otimes n}} = \KSP{\mc{IP}_{1,2}}{a^{\otimes {n+1}}} + t n \KSP{\mc{IP}_{1,2}}{a^{\otimes {n-1}}},
\]
\[
x \KSP{\mc{IP}}{a^{\otimes n}} = \KSP{\mc{IP}}{a^{\otimes {n+1}}} + (t + n) \KSP{\mc{IP}}{a^{\otimes {n}}} + t n \KSP{\mc{IP}}{a^{\otimes {n-1}}},
\]
and
\[
\begin{split}
x \KSP{\mc{IPRM}}{a^{\otimes n}}
& = \KSP{\mc{IPRM}}{a^{\otimes {n+1}}} + (t + 2 n) \KSP{\mc{IPRM}}{a^{\otimes {n}}} \\
& \quad + (t n + n (n-1)) \KSP{\mc{IPRM}}{a^{\otimes {n-1}}}.
\end{split}
\]
Thus $\KSP{\mc{IP}_{1,2}}{a^{\otimes n}} = H_n(x,t)$, the Hermite polynomial; $\KSP{\mc{IP}}{a^{\otimes n}} = C_n(x,t)$, the Charlier polynomial; and $\KSP{\mc{IPRM}}{a^{\otimes n}} = L_n^{(t-1)}(x)$, the Laguerre polynomial. We thus get (mostly known) inversion, moment, product, and linearization formulas for these polynomials. For example, for the Laguerre case
\begin{equation*}
x^n = \sum_{(\pi, S) \in \mc{IPRM}(n)} t^{\abs{\pi \setminus S}} L_{\abs{S}}^{(t-1)}(x)
= \sum_{\ell=0}^n \binom{n}{\ell} (t+\ell) \ldots (t+n-1) L_{\abs{S}}^{(t-1)}(x),
\end{equation*}
\[
L_n^{(t-1)}(x) = \sum_{(\pi, S) \in \mc{IPRM}(n)} (-1)^{n - \abs{S}} t^{\abs{\pi \setminus S}} x^{\abs{S}}
= \sum_{\ell=0}^n (-1)^{n-\ell} \binom{n}{\ell} (t+\ell) \ldots (t+n-1) x^\ell,
\]
and
\[
\prod_{i=1}^k L_{s(i)}^{(t-1)}(x)
= \sum_{(\pi, S) \in \mc{ID}(s(1), \ldots, s(k))} t^{\abs{\pi \setminus S}} L_{\abs{S}}^{(t-1)}(x).
\]
extending the moment and linearization formulas \cite{Foata-Zeilberger-Laguerre}
\[
\int x^n \,d\mu(x) = \sum_{\pi \in \Sym(n)} t^{\cyc{\pi}} = t (1+1) \ldots (t+n-1),
\]
\[
\int \prod_{i=1}^k L_{s(i)}^{(t-1)}(x) \,d\mu(x) = \sum_{\pi \in \mc{D}(s(1), \ldots, s(k))} t^{\cyc{\pi}}.
\]
Similarly, since
\[
\sum_{\substack{(\pi, S) \in \mc{IP}(n), U \in \pi \setminus S \Rightarrow \abs{U} = 1, \\ \abs{\pi \setminus S} = k, \abs{S} = \ell}} \prod_{V \in S} (\abs{V} - 1)!
= \binom{n}{k} \sum_{\pi \in \mc{P}(n-k), \abs{\pi} = \ell} \prod_{V \in \pi} (\abs{V} - 1)!
= \binom{n}{k} s_{n-k,\ell},
\]
we obtain the familiar result that the Charlier polynomials are
\[
C_n(x,t) = \sum_{k, \ell = 0}^n (-1)^{n - \ell} \binom{n}{k} s_{n-k,\ell} t^k x^\ell
= \sum_{k=0}^n (-1)^{k} \frac{n!}{k!} t^k \binom{x}{n-k}.
\]
Finally, since
\[
\abs{\set{(\pi, S) \in \mc{INC}(n), \pi \in \Int(n), V \in \pi \setminus S \Rightarrow \abs{V} = 1, \abs{\pi \setminus S} = k, \abs{S} = \ell}}
= \binom{n-k}{\ell - 1} \binom{k + \ell}{\ell},
\]
the free Charlier polynomials are
\[
P_n(x,t) = \sum_{k, \ell} (-1)^{n - \ell} \binom{n-k}{\ell - 1} \binom{k + \ell}{\ell} t^k x^\ell.
\]
See, for example, Chapter 7 in \cite{Aigner-Course-Enumeration} for many related combinatorial results.
\end{Remark}

\begin{Remark}
\label{Remark:Meixner-moments}
Let $\mu_{\alpha, \beta, t, \gamma}$ be the measure of orthogonality of the free Meixner polynomials, with the Jacobi-Szeg\H{o} parameters
\[
\begin{pmatrix}
\alpha, & \alpha + \beta, & \alpha + \beta, & \ldots \\
t, & t + \gamma, & t + \gamma, & \ldots
\end{pmatrix}.
\]
Then from the Viennot-Flajolet theorem, the $n$'th moment of this measure is
\begin{multline}
\label{Eq:Viennot}
\sum_{\tau \in \mc{NC}_{1,2}(n)} \prod_{\substack{V \in \Outer(\tau), \\ \abs{V} = 1}} \alpha \prod_{\substack{V \in \tau \setminus \Outer(\tau), \\ \abs{V} = 1}} (\alpha + \beta) \prod_{\substack{U \in \Outer(\tau), \\ \abs{U} = 2}} t \prod_{\substack{U \in \tau \setminus \Outer(\tau), \\ \abs{U} = 2}} (t + \gamma) \\
= \sum_{\sigma \in \mc{NC}(n)}  \alpha^{\abs{\Sing(\sigma)}} \beta^{n - 2 \abs{\sigma} + \abs{\Sing(\sigma)}} t^{\abs{\Outer(\sigma) \setminus \Sing(\sigma)}} (t + \gamma)^{\abs{\sigma} - \abs{\Sing(\sigma)} - \abs{\Outer(\sigma) \setminus \Sing(\sigma)}}.
\end{multline}
We may interpret this as saying that the two-state free cumulants of the pair of free Meixner and free Poisson measures $(\mu_{\alpha, \beta, t, \gamma}, \mu_{\alpha, \beta, 0, t})$ are
\[
R_1 = r_1 = \alpha, \quad R_j = (t + \gamma) \beta^{j-2}, \quad r_j = t \beta^{j-2}.
\]
Cf. Proposition~10 in \cite{AnsAppell3}.

Various classical combinatorial sequences appearing as moments of these measures are listed in Section~7.4 of \cite{Aigner-Course-Enumeration}. These include Catalan, Motzkin, and Schr{\"o}der numbers. Expansions \eqref{Eq:Moment} and \eqref{Eq:Viennot} then give us various combinatorial identities. For example, for $\alpha =  t = \gamma = 1$ and $\beta = 2$, the free cumulants are Catalan numbers while the moments are the (shifted) large Schr{\"o}der numbers, and we obtain the relations
\[
\mathrm{Sch}_{n-1} = \sum_{\pi \in \mc{NC}(n)} \prod_{U \in \pi} c_{\abs{U} - 1}
= \sum_{\sigma \in \mc{NC}(n)} 2^{n - \abs{\sigma} - \abs{\Outer(\sigma) \setminus \Sing(\sigma)}}.
\]
For the first relation, cf. Corollary~8.4 in \cite{Dykema-Multilinear-function-series}. If $\beta = t = \gamma = 1$, and $\alpha = 0$ the free cumulants are Motzkin numbers, and the moments are
\[
\sum_{\pi \in \mc{NC}_{\geq 2}(n)} \prod_{U \in \pi} M_{\abs{U} - 2}
= \sum_{\sigma \in \mc{NC}_{\geq 2}(n)} 2^{\abs{\sigma} - \abs{\Outer(\sigma)}}.
\]
Either for $\alpha = 1$ or $\alpha = 0$ this moment sequence does not appear in \cite{OEIS}.
\end{Remark}

\begin{Remark}
In this remark we compute the sum of the coefficients in the expansion \eqref{Eq:Monomial}. According to Lemma~\ref{Lemma:Kappa}, this sum is
\[
T_n = \sum_{(\pi, S) \in \mc{INC}(n)} \prod_{U \in \pi \setminus S} \kappa_{\alpha, \beta, t, \gamma}^{\abs{U}} \prod_{V \in S} \omega_{\alpha, \beta, t, \gamma}^{\abs{V}}
= \sum_{\pi \in \mc{NC}(n)} \prod_{U \in \pi \setminus \Outer(\pi)} \kappa_{\alpha, \beta, t, \gamma}^{\abs{U}} \prod_{V \in \Outer(\pi)} (\omega_{\alpha, \beta, t, \gamma}^{\abs{V}} + \kappa_{\alpha, \beta, t, \gamma}^{\abs{V}}).
\]
Using the same lemma,
\[
r(z) = \sum_{n=1}^\infty \kappa_{\alpha, \beta, t, \gamma}^n z^{n-1}
= \alpha + t \sum_{n=2}^\infty M_{n-2}(\beta, \gamma) z^{n-1}
= \alpha + t z F_{\beta, \gamma}(z)
\]
and
\[
R(z) = \sum_{n=1}^\infty (\omega_{\alpha, \beta, t, \gamma}^n + \kappa_{\alpha, \beta, t, \gamma}^n) z^{n-1}
= \alpha + t z F_{\beta, \gamma}(z) + \sum_{n=1}^\infty M_{n-1}(\beta, \gamma) z^{n-1}
= \alpha + (t z + 1) F_{\beta, \gamma}(z)
\]
where
\[
F_{\beta, \gamma}(z) = \sum_{n=0}^\infty M_n(\beta, \gamma) z^n
= \frac{1 - \beta z - \sqrt{(\beta z - 1)^2 - 4 \gamma z^2}}{2 \gamma z^2}
\]
is the generating function of Motzkin polynomials. According to the two-state free probability theory, the moment generating function of $\mu_{\alpha, \beta, t, \gamma}$ from Remark~\ref{Remark:Meixner-moments} is the solution of
\[
\frac{1}{z m_{\alpha, \beta, t, \gamma}(z)} + r(z m_{\alpha, \beta, t, \gamma}(z)) = \frac{1}{z},
\]
and the generating function of the desired sequence is
\[
M(z) = \sum_{n=0}^\infty T_n z^n = \frac{1}{1 - z R(z m_{\alpha, \beta, t, \gamma}(z))}.
\]
According to Remark \ref{Remark:Other-bijections} below, the most natural choice of the parameters appears to be $\alpha = t = \gamma = 1$, $\beta = 2$. In this case
\[
F(z) = \frac{1 - 2 z - \sqrt{1 - 4 z}}{2 z^2}, \qquad R(z) = \frac{1 - z - (z + 1) \sqrt{1 - 4 z}}{2 z^2},
\]
and $m(z)$ is the generating function of shifted large Schr{\"o}der numbers
\[
m(z) = \frac{3 - z - \sqrt{1 - 6 z + z^2}}{2}.
\]
Using Maple, we compute the first few terms in the sequence $T_n$ to be $1, 2, 7, 30, 140, 684$. This sequence now appears in \cite{OEIS} as A299296, but has not arisen in other contexts.
\end{Remark}

\begin{Example}
From Theorem~\ref{Thm:Inversion}, we can get a variety of different-looking combinatorial expansions.

For $\alpha = \beta = \gamma = t = 1$, Case II. $u, v = e^{\pm (\pi/3) i}$.
\[
o_k = 1, 1, 0, -1, -1, 0, \ldots, \qquad c_k = 1, 0, -1, -1, 0, 1, \ldots.
\]
\begin{multline*}
\KS{a_1 \otimes \ldots \otimes a_n} \\
= \sum_{\substack{(\pi, S) \\ \pi \in \Int(n) \\ S \subset \pi \\ U \in \pi \setminus S \Rightarrow \abs{U} \neq 2 \mod 3 \\ V \in S \Rightarrow \abs{V} \neq 0 \mod 3}} (-1)^{n - \abs{U \in \pi \setminus S: \abs{U} = 3 \text{ or } 4 \text{ mod } 6} - \abs{V \in S: \abs{V} = 1 \text{ or } 2 \text{ mod } 6}} \prod_{U \in \pi \setminus S} \Exp{a_U} \prod_{V \in S} X(a_V).
\end{multline*}
For $\alpha = 1$, $\gamma = t$, $\beta = t + 1$, Case II$'$. $u=1$, $v=t$.
\[
o_k = \frac{1}{1 - t} (1 - t^k), \qquad c_k = 1.
\]
\[
\KS{a_1 \otimes \ldots \otimes a_n} = \sum_{\substack{(\pi, S) \\ \pi \in \Int(n) \\ S \subset \pi}} (-1)^{n - \abs{S}} \prod_{U \in \pi \setminus S} \Exp{a_U} \prod_{V \in S} \left( \frac{1 - t^{\abs{V}}}{1 - t} \right) X(a_V).
\]
For $\alpha = 0$, $\gamma = t = 1$, $\beta = 2$, Case III.
\[
o_k = k, \qquad c_k = -(k-1).
\]
\[
\KS{a_1 \otimes \ldots \otimes a_n} = \sum_{\substack{(\pi, S) \\ \pi \in \Int(n) \\ \Sing(\pi) \subset S \subset \pi}} (-1)^{n - \abs{\pi}} \prod_{U \in \pi \setminus S} (\abs{U} - 1) \Exp{a_U} \prod_{V \in S} \abs{V} X(a_V).
\]
For $\alpha = \gamma = t = 1$, $\beta = 2$, Case III$'$.
\[
o_k = k, \qquad c_k = 1.
\]
\[
\KS{a_1 \otimes \ldots \otimes a_n} = \sum_{\substack{(\pi, S) \\ \pi \in \Int(n) \\ S \subset \pi}} (-1)^{n - \abs{S}} \prod_{U \in \pi \setminus S} \Exp{a_U} \prod_{V \in S} \abs{V} X(a_V).
\]
These in turn give expansions for free Meixner polynomials and may serve as a source of combinatorial identities.
\end{Example}

\section{Alternative approaches to Theorem~\ref{Thm:Free-Meixner-Wick}}
\label{Sec:Alternative}

\begin{Remark}
An alternative combinatorial structure we could have used in Section~\ref{Sec:Free-Meixner} are linked partitions. According to \cite{Dykema-Multilinear-function-series,Nica-NC-Linked}, the pairs $\set{\sigma \ll \pi: \sigma, \pi \in \mc{NC}(n)}$ are in a natural bijection with the set of non-crossing linked partitions $\mc{NCL}(n)$, and doubling the value of $\beta$ gives a bijection between such pairs with $\Sing(\sigma) = \Sing(\pi)$ and all such pairs. Moveover, according to \cite{Chen-Wu-Yan-Linked-partitions,Chen-Liu-Wang-Linked-partitions-tableaux}, permutations are in a natural bijection with the set of all linked partitions $\mc{LP}(n)$. The results of Theorems~\ref{Thm:5th-expansion} and \ref{Thm:Free-Meixner-Wick} can be phrased in terms of these objects, see \cite{Yano-Yoshida-Linked-partitions} for related moment computations. This approach has not led us to any clarification in the inversion or product formulas.

In place of partitions, we could also (of course) have used colored Motzkin paths. From the point of view of Definition~\ref{Defn:Free-Meixner-Wick}, the most natural family are those with a single color for rising steps and flat and falling steps at height zero, two colors for the other falling steps, and three colors for the rest of flat steps. It is not hard to see using the continued fraction form of the generating functions that the number of such paths of length $n+1$ is equal to the number of large $(3,2)$-Motzkin paths of length $n$ in the sense of \cite{Chen-Wang-NC-linked-Motzkin} (similar to the above, except their falling and flat steps at height zero are allowed two colors). This number in turn is known to be the (large) Schr{\"o}der number, see Remark~\ref{Remark:Meixner-moments}.
\end{Remark}

\begin{Remark}
\label{Remark:Other-bijections}
Unlike in the expansions in the five examples in Section~\ref{Sec:Wick}, the terms on the right hand side of \eqref{Eq:Monomial} have multiplicities. One can modify Definition~\ref{Defn:Free-Meixner-Wick} to obtain bijective representations. For example, we may define instead
\begin{equation*}
\begin{split}
\KS{a_1 \otimes \ldots \otimes a_n \otimes a_{n+1}}
& = \KS{a_1 \otimes \ldots \otimes a_n} X(a_{n+1}) - \alpha \KS{a_1 \otimes \ldots \otimes a_n} \Exp{a_{n+1}} \\
&- \beta \KS{a_1 \otimes \ldots \otimes a_{n-1} \otimes a_n a_{n+1}} - t \KS{a_1 \otimes \ldots \otimes a_{n-1}} \Exp{a_n a_{n+1}} \\
&- \gamma \KS{a_1 \otimes \ldots \otimes a_{n-2} \otimes a_n a_{n+1} a_{n-1}}.
\end{split}
\end{equation*}
Note that this definition works only in the scalar-valued and not in the operator-valued case. The corresponding terms are in a bijection with the following collection of incomplete permutations. First, they have no double descents. Second, arrange each closed block so that it ends in its largest element. Then the descent-ascents in each block appear in decreasing order. Finally, split each block into sub-words, ending with the final letter or a descent-ascent, and beginning with the initial letter or right after the preceding descent-ascent. Then the partition into these sub-words is non-crossing.

We may also define
\begin{equation*}
\begin{split}
\KS{a_1 \otimes \ldots \otimes a_n \otimes a_{n+1}}
& = \KS{a_1 \otimes \ldots \otimes a_n} X(a_{n+1}) - \alpha \KS{a_1 \otimes \ldots \otimes a_n} \Exp{a_{n+1}} \\
&\quad - \beta_1 \KS{a_1 \otimes \ldots \otimes a_{n-1} \otimes a_n a_{n+1}} \\
&\quad - \beta_2 \KS{a_1 \otimes \ldots \otimes a_{n-1} \otimes a_{n+1} a_{n}} \\
&\quad - t \KS{a_1 \otimes \ldots \otimes a_{n-1}} \Exp{a_n a_{n+1}} \\
&\quad - \gamma \KS{a_1 \otimes \ldots \otimes a_{n-2} \otimes a_n a_{n+1} a_{n-1}},
\end{split}
\end{equation*}
The corresponding terms are in a bijection with the following collection of incomplete permutations. Arrange each closed block so that it ends in its largest element. Then the descent-ascents in each block appear in decreasing order. Split each block as above. Then the partition into these sub-words is non-crossing, and on each sub-block, the letters are decreasing and then increasing, with the sub-block maximum at the end.

This description appears related to the work of West \cite{West-Generating-trees}, who studied permutations avoiding the patters $(3142, 2413)$ (sometimes called separable permutations). He proved that the cardinality of this set is the Schr{\"o}der number (see Remark~\ref{Remark:Meixner-moments}), and the argument uses trees reminiscent of the construction above.
\end{Remark}


\begin{thebibliography}{ABFN13}

\bibitem[Aig07]{Aigner-Course-Enumeration}
Martin Aigner, \emph{A course in enumeration}, Graduate Texts in Mathematics,
  vol. 238, Springer, Berlin, 2007. \MR{2339282 (2008f:05001)}

\bibitem[Ans04a]{AnsAppell}
Michael Anshelevich, \emph{Appell polynomials and their relatives}, Int. Math.
  Res. Not. (2004), no.~65, 3469--3531. \MR{2101359 (2005k:33012)}

\bibitem[Ans04b]{AnsQLevy}
\bysame, \emph{{$q$}-{L}\'evy processes}, J. Reine Angew. Math. \textbf{576}
  (2004), 181--207. \MR{2099204 (2006c:46052)}

\bibitem[Ans07]{AnsFree-Meixner}
\bysame, \emph{Free {M}eixner states}, Comm. Math. Phys. \textbf{276} (2007),
  no.~3, 863--899. \MR{2350440 (2009b:81106)}

\bibitem[Ans09]{AnsAppell3}
\bysame, \emph{Appell polynomials and their relatives. {III}. {C}onditionally
  free theory}, Illinois J. Math. \textbf{53} (2009), no.~1, 39--66.
  \MR{2584934}

\bibitem[ABFN13]{Ans-Bel-Fev-Nica}
Michael Anshelevich, Serban~T. Belinschi, Maxime Fevrier, and Alexandru Nica,
  \emph{Convolution powers in the operator-valued framework}, Trans. Amer.
  Math. Soc. \textbf{365} (2013), no.~4, 2063--2097. \MR{3009653}

\bibitem[BN08]{Belinschi-Nica-Eta}
Serban~T. Belinschi and Alexandru Nica, \emph{{$\eta$}-series and a {B}oolean
  {B}ercovici-{P}ata bijection for bounded {$k$}-tuples}, Adv. Math.
  \textbf{217} (2008), no.~1, 1--41. \MR{2357321}

\bibitem[Bia93]{Biane-Heine}
Philippe Biane, \emph{Permutations suivant le type d'exc\'edance et le nombre
  d'inversions et interpr\'etation combinatoire d'une fraction continue de
  {H}eine}, European J. Combin. \textbf{14} (1993), no.~4, 277--284.
  \MR{1226575}

\bibitem[BS98]{BiaSpeBrownian}
Philippe Biane and Roland Speicher, \emph{Stochastic calculus with respect to
  free {B}rownian motion and analysis on {W}igner space}, Probab. Theory
  Related Fields \textbf{112} (1998), no.~3, 373--409. \MR{99i:60108}

\bibitem[BRR89]{Borwein-Injective-partial}
David Borwein, Stuart Rankin, and Lex Renner, \emph{Enumeration of injective
  partial transformations}, Discrete Math. \textbf{73} (1989), no.~3, 291--296.
  \MR{983027}

\bibitem[BP14]{Bourguin-Peccati-Semicircular-free-Poisson}
Solesne Bourguin and Giovanni Peccati, \emph{Semicircular limits on the free
  {P}oisson chaos: counterexamples to a transfer principle}, J. Funct. Anal.
  \textbf{267} (2014), no.~4, 963--997. \MR{3217054}

\bibitem[CLW13]{Chen-Liu-Wang-Linked-partitions-tableaux}
William Y.~C. Chen, Lewis~H. Liu, and Carol~J. Wang, \emph{Linked partitions
  and permutation tableaux}, Electron. J. Combin. \textbf{20} (2013), no.~3,
  Paper 53, 10. \MR{3118961}

\bibitem[CW12]{Chen-Wang-NC-linked-Motzkin}
William Y.~C. Chen and Carol~J. Wang, \emph{Noncrossing linked partitions and
  large {$(3,2)$}-{M}otzkin paths}, Discrete Math. \textbf{312} (2012), no.~11,
  1918--1922. \MR{2913085}

\bibitem[CWY08]{Chen-Wu-Yan-Linked-partitions}
William Y.~C. Chen, Susan Y.~J. Wu, and Catherine~H. Yan, \emph{Linked
  partitions and linked cycles}, European J. Combin. \textbf{29} (2008), no.~6,
  1408--1426. \MR{2423730}

\bibitem[CSZ97]{Clarke-Steingrimsson-Zeng--Euler-Mahonian}
Robert~J. Clarke, Einar Steingr{\'\i}msson, and Jiang Zeng, \emph{New
  {E}uler-{M}ahonian statistics on permutations and words}, Adv. in Appl. Math.
  \textbf{18} (1997), no.~3, 237--270. \MR{1436481}

\bibitem[Dyk07]{Dykema-Multilinear-function-series}
Kenneth~J. Dykema, \emph{Multilinear function series and transforms in free
  probability theory}, Adv. Math. \textbf{208} (2007), no.~1, 351--407.
  \MR{2304321 (2008k:46193)}

\bibitem[EP03]{Effros-Popa}
Edward~G. Effros and Mihai Popa, \emph{Feynman diagrams and {W}ick products
  associated with {$q$}-{F}ock space}, Proc. Natl. Acad. Sci. USA \textbf{100}
  (2003), no.~15, 8629--8633 (electronic). \MR{1994546 (2004h:81099)}

\bibitem[FZ88]{Foata-Zeilberger-Laguerre}
Dominique Foata and Doron Zeilberger, \emph{Laguerre polynomials, weighted
  derangements, and positivity}, SIAM J. Discrete Math. \textbf{1} (1988),
  no.~4, 425--433. \MR{968850}

\bibitem[IKZ13]{Ismail-Kasraoui-Separation}
Mourad E.~H. Ismail, Anisse Kasraoui, and Jiang Zeng, \emph{Separation of
  variables and combinatorics of linearization coefficients of orthogonal
  polynomials}, J. Combin. Theory Ser. A \textbf{120} (2013), no.~3, 561--599.
  \MR{3007137}

\bibitem[KSZ06]{Kim-Stanton-Zeng}
Dongsu Kim, Dennis Stanton, and Jiang Zeng, \emph{The combinatorics of the
  {A}l-{S}alam-{C}hihara {$q$}-{C}harlier polynomials}, S\'em. Lothar. Combin.
  \textbf{54} (2005/06), Art. B54i, 15 pp. (electronic). \MR{2223031
  (2007b:05024)}

\bibitem[KZ01]{KimZeng}
Dongsu Kim and Jiang Zeng, \emph{A combinatorial formula for the linearization
  coefficients of general {S}heffer polynomials}, European J. Combin.
  \textbf{22} (2001), no.~3, 313--332. \MR{1822720 (2001k:05023)}

\bibitem[KMS07]{Kusalik-Mingo-Speicher}
Timothy Kusalik, James~A. Mingo, and Roland Speicher, \emph{Orthogonal
  polynomials and fluctuations of random matrices}, J. Reine Angew. Math.
  \textbf{604} (2007), 1--46. \MR{2320312}

\bibitem[Nic10]{Nica-NC-Linked}
Alexandru Nica, \emph{Non-crossing linked partitions, the partial order {$\ll$}
  on {${\rm NC}(n)$}, and the {$S$}-transform}, Proc. Amer. Math. Soc.
  \textbf{138} (2010), no.~4, 1273--1285. \MR{2578521}

\bibitem[OEIS17]{OEIS}
OEIS~Foundation Inc., \emph{The on-line encyclopedia of integer sequences},
  2017, http://oeis.org.

\bibitem[PT11]{Peccati-Taqqu-book}
Giovanni Peccati and Murad~S. Taqqu, \emph{Wiener chaos: moments, cumulants and
  diagrams}, Bocconi \& Springer Series, vol.~1, Springer, Milan; Bocconi
  University Press, Milan, 2011, A survey with computer implementation,
  Supplementary material available online. \MR{2791919 (2012d:60157)}

\bibitem[RW97]{Rota}
Gian-Carlo Rota and Timothy~C. Wallstrom, \emph{Stochastic integrals: a
  combinatorial approach}, Ann. Probab. \textbf{25} (1997), no.~3, 1257--1283.
  \MR{98m:60081}

\bibitem[SS94]{Simion-Stanton-Laguerre}
R.~Simion and D.~Stanton, \emph{Specializations of generalized {L}aguerre
  polynomials}, SIAM J. Math. Anal. \textbf{25} (1994), no.~2, 712--719.
  \MR{1266585}

\bibitem[{\'S}ni00]{Sniady-SWN}
Piotr {\'S}niady, \emph{Quadratic bosonic and free white noises}, Comm. Math.
  Phys. \textbf{211} (2000), no.~3, 615--628. \MR{1773810 (2001i:81148)}

\bibitem[Spe98]{SpeHab}
Roland Speicher, \emph{Combinatorial theory of the free product with
  amalgamation and operator-valued free probability theory}, Mem. Amer. Math.
  Soc. \textbf{132} (1998), no.~627, x+88. \MR{1407898 (98i:46071)}

\bibitem[Wes95]{West-Generating-trees}
Julian West, \emph{Generating trees and the {C}atalan and {S}chr\"oder
  numbers}, Discrete Math. \textbf{146} (1995), no.~1-3, 247--262. \MR{1360119}

\bibitem[YY09]{Yano-Yoshida-Linked-partitions}
Fujine Yano and Hiroaki Yoshida, \emph{A representation of linked partitions
  and continued fractions}, unpublished, 2009?

\end{thebibliography}

\def\cprime{$'$} \def\cprime{$'$}
\providecommand{\bysame}{\leavevmode\hbox to3em{\hrulefill}\thinspace}
\providecommand{\MR}{\relax\ifhmode\unskip\space\fi MR }
\providecommand{\MRhref}[2]{%
  \href{http://www.ams.org/mathscinet-getitem?mr=#1}{#2}
}
\providecommand{\href}[2]{#2}

\end{document}